\documentclass{amsart}
\usepackage{amsmath}
\usepackage{amsfonts}

\setcounter{MaxMatrixCols}{10}

\newtheorem{theorem}{Theorem}
\theoremstyle{plain}

\newtheorem{corollary}{Corollary}

\newtheorem{definition}{Definition}
\newtheorem{example}{Example}

\newtheorem{lemma}{Lemma}

\newtheorem{proposition}{Proposition}

\numberwithin{equation}{section}
\input{tcilatex}

\begin{document}
\title[Bilinear Multipliers of Small Lebesgue spaces]{Bilinear Multipliers
of Small Lebesgue spaces}
\author{\"{O}znur Kulak}
\address{Amasya University, Faculty of Arts and Sciences, Department of
Mathematics, Amasya, Turkey}
\email{oznur.kulak@amasya.edu.tr}
\urladdr{}
\thanks{.}
\author{A.Turan G\"{u}rkanl\i }
\address{Istanbul Arel University, Faculty of Science and Letters,
Department of Mathematics and Computer Sciences, \.{I}stanbul, Turkey}
\curraddr{\"{y} }
\email{turangurkanli@arel.edu.tr}
\urladdr{}
\thanks{}
\subjclass[2000]{Primary 46E20; Secondary 42A45}
\keywords{Bilinear multipliers, Grand Lebesgue spaces, Small Lebesgue spaces.%
}
\dedicatory{}
\thanks{}

\begin{abstract}
Let $G$ be a locally compact abelian metric group with Haar measure $\lambda 
$ and $\hat{G}$ its dual with Haar measure $\mu ,$ and $\lambda \left(
G\right) $ is finite. Assume that$~1<p_{i}<\infty $, $p_{i}^{\prime }=\frac{%
p_{i}}{p_{i}-1}$, $\left( i=1,2,3\right) $ and $\theta \geq 0$. Let $%
L^{(p_{i}^{\prime },\theta }\left( G\right) ,$ $\left( i=1,2,3\right) $ be
small Lebesgue spaces. A bounded measurable function $m\left( \xi ,\eta
\right) $ defined on $\hat{G}\times \hat{G}$ is said to be a bilinear
multiplier on $G$ of type $\left[ (p_{1}^{\prime };(p_{2}^{\prime
};(p_{3}^{\prime }\right] _{\theta }$ if the bilinear operator $B_{m}$
associated with the symbol $m$ 
\begin{equation*}
B_{m}\left( f,g\right) \left( x\right) =\dsum\limits_{s\in \hat{G}%
}\dsum\limits_{t\in \hat{G}}\hat{f}\left( s\right) \hat{g}\left( t\right)
m\left( s,t\right) \left\langle s+t,x\right\rangle
\end{equation*}%
defines a bounded bilinear operator from $L^{(p_{1}^{\prime },\theta }\left(
G\right) \times L^{(p_{2}^{\prime },\theta }\left( G\right) $ into $%
L^{(p_{3}^{\prime },\theta }\left( G\right) $. We denote by \ $BM_{\theta }%
\left[ (p_{1}^{\prime };(p_{2}^{\prime };(p_{3}^{\prime }\right] $ the space
of all bilinear multipliers of type $\left[ (p_{1}^{\prime };(p_{2}^{\prime
};(p_{3}^{\prime }\right] _{\theta }$. In this paper, we discuss some basic
properties of the space $BM_{\theta }\left[ (p_{1}^{\prime };(p_{2}^{\prime
};(p_{3}^{\prime }\right] $ and give examples of bilinear multipliers.
\end{abstract}

\maketitle

\section{\protect\bigskip Introduction}

Let $\Omega $ be locally compact Hausdorff space and let $\left( \Omega ,%
\mathit{B},\mu \right) $ be finite Borel measure space. The grand Lebesgue
space $L^{p)}\left( \Omega \right) $ is defined by the norm%
\begin{equation*}
\left\Vert f\right\Vert _{p)}=\underset{0<\varepsilon \leq p-1}{\sup }\left(
\varepsilon \underset{\Omega }{\dint }\left\vert f\right\vert
^{p-\varepsilon }d\mu \right) ^{\frac{1}{p-\varepsilon }}
\end{equation*}%
where $1<p<\infty $, $\left[ 7\right] .$ A generalization of the grand
Lebesgue spaces are the spaces $L^{p),\theta }\left( \Omega \right) $, $%
\theta \geq 0$, defined by the norm%
\begin{equation*}
\left\Vert f\right\Vert _{p),\theta ,\Omega }=\underset{0<\varepsilon \leq
p-1}{\sup }\varepsilon ^{\frac{\theta }{p-\varepsilon }}\left( \underset{%
\Omega }{\dint }\left\vert f\right\vert ^{p-\varepsilon }d\mu \right) ^{%
\frac{1}{p-\varepsilon }}=\underset{0<\varepsilon \leq p-1}{\sup }%
\varepsilon ^{\frac{\theta }{p-\varepsilon }}\left\Vert f\right\Vert
_{p-\varepsilon };
\end{equation*}%
when $\theta =0$ the space $L^{p),0}\left( \Omega \right) $ reduces to
Lebesgue space $L^{p}\left( \Omega \right) $ and when $\theta =1$ the space $%
L^{p),1}\left( \Omega \right) $ reduces to grand Lebesgue space $%
L^{p)}\left( \Omega \right) $, (see, $\left[ 3\right] ).$ For $0<\varepsilon
\leq p-1$, 
\begin{equation*}
L^{p}\left( \Omega \right) \subset L^{p),\theta }\left( \Omega \right)
\subset L^{p-\varepsilon }\left( \Omega \right)
\end{equation*}%
hold. It is known that the subspace $C_{c}^{\infty }\left( \Omega \right) $
is not dense in $L^{p),\theta }\left( \Omega \right) $, where $C_{c}^{\infty
}\left( \Omega \right) $ is the space of infinitely differentiable complex
valued functions defined on $\Omega $ with compact support. Its closure of
consists of functions $f\in L^{p}\left( \Omega \right) $ such that 
\begin{equation*}
\underset{\varepsilon \rightarrow 0}{\lim }\varepsilon ^{\frac{\theta }{%
p-\varepsilon }}\left\Vert f\right\Vert _{p-\varepsilon }=0\text{, }\left[ 3%
\right] .
\end{equation*}%
For some properties and applications of $L^{p}\left( \Omega \right) $, we
refer to papers $\left[ 2\right] $, $\left[ 4\right] $, $\left[ 5\right] $
and $\left[ 6\right] .$

Let $p^{\prime }=\frac{p}{p-1}$, $1<p<\infty $. First consider an auxiliary
space namely $L^{(p^{\prime },\theta }\left( \Omega \right) $, $\theta \geq
0 $, defined by%
\begin{equation*}
\left\Vert g\right\Vert _{(p^{\prime },\theta }=\underset{g=\underset{k=1}{%
\overset{\infty }{\dsum }}g_{k}}{\inf }\left\{ \underset{k=1}{\overset{%
\infty }{\dsum }}\underset{0<\varepsilon <p-1}{\inf }\varepsilon ^{-\frac{%
\theta }{p-\varepsilon }}\left( \underset{\Omega }{\doint }\left\vert
g_{k}\left( x\right) \right\vert ^{\left( p-\varepsilon \right) ^{\prime
}}d\left( x\right) \right) ^{\frac{1}{\left( p-\varepsilon \right) ^{\prime }%
}}\right\}
\end{equation*}%
where the functions $g_{k}$, $k\in 
\mathbb{N}
$, being in $\mathit{M}_{0},$ the set of all real valued measurable
functions, finite a.e. in $\Omega $. After this definition the generalized
small Lebesgue spaces have been defined by 
\begin{equation*}
L^{p)^{\prime },\theta }\left( \Omega \right) =\left\{ g\in \mathit{M}_{0}%
\text{: }\left\Vert g\right\Vert _{p)^{\prime },\theta }<\infty \right\} ,
\end{equation*}%
where%
\begin{equation*}
\left\Vert g\right\Vert _{p)^{\prime },\theta }=\underset{%
\begin{array}{c}
0\leq \psi \leq \left\vert g\right\vert \\ 
\psi \in L^{(p^{\prime },\theta }\left( \Omega \right)%
\end{array}%
}{\sup }\left\Vert \psi \right\Vert _{(p^{\prime },\theta }.
\end{equation*}%
For $\theta =0$ it is $\left\Vert f\right\Vert _{(p^{\prime },0}=\left\Vert
f\right\Vert _{p)^{\prime },\theta }$, $\left[ 2\right] $, $\left[ 6\right]
. $

Let $G$ be a locally compact abelian metric group with Haar measure $\lambda 
$ and\ let $\hat{G}$ be dual group with Haar measure $\mu $. The translation
and modulation operators are given by 
\begin{equation*}
T_{x}f\left( t\right) =f\left( t-x\right) \text{, }M_{\xi }f\left( t\right)
=\left\langle t,\xi \right\rangle f\left( t\right) \text{, }t,x\in G,\text{ }%
\xi \in \hat{G}.
\end{equation*}%
For a function $f\in L^{1}\left( G\right) $, the function $\hat{f}$ defined
on $\hat{G}$ by 
\begin{equation*}
\hat{f}\left( \gamma \right) =\underset{G}{\int }f\left( x\right)
\left\langle \gamma ,-x\right\rangle d\lambda \left( x\right) \text{, }%
\gamma \in \hat{G}
\end{equation*}%
is called the Fourier transform of $f$, $\left[ 12\right] .$

\section{Main Results}

Let $G$ be a locally compact abelian metric group and $\hat{G}$ its dual
with Haar measures $\lambda $ and $\mu $ respectively. Before give the
definition of bilinear multiplier on $G$ of type $\left[ (p_{1}^{\prime
};(p_{2}^{\prime };(p_{3}^{\prime }\right] _{\theta },$ we remember that if $%
\lambda \left( G\right) $ is finite, then $G$ is compact. Thus the $\hat{G}$
dual of $G$ (Pontryagin dual) is a discrete group, and the dual measure on
this group is the counting measure. Also since $G$ is compact abelian metric
group is $\hat{G}$ is countable, $[12]$.

\begin{definition}
Let $G$ be a locally compact abelian metric group with Haar measure $\lambda 
$ and $\hat{G}$ its dual with Haar measure $\mu ,$ and $\lambda \left(
G\right) $ is finite. Assume that $~1<p_{i}<\infty $, $p_{i}^{\prime }=\frac{%
p_{i}}{p_{i}-1}$, $\left( i=1,2,3\right) $ and $\theta \geq 0$. We also
assume that $m\left( s,t\right) $ is a bounded measurable function on $\hat{G%
}\times \hat{G}$. Consider the bilinear operator $B_{m}$ associated with the
symbol $m$ 
\begin{equation*}
B_{m}\left( f,g\right) \left( x\right) =\dsum\limits_{s\in \hat{G}%
}\dsum\limits_{t\in \hat{G}}\hat{f}\left( s\right) \hat{g}\left( t\right)
m\left( s,t\right) \left\langle s+t,x\right\rangle ,
\end{equation*}%
defined for functions \ $f$, $g\in C^{\infty }\left( G\right) $. $m$ is said
to be a bilinear multiplier on $G$ of type $\left[ (p_{1}^{\prime
};(p_{2}^{\prime };(p_{3}^{\prime }\right] _{\theta }$, if there exists $C>0$
such that 
\begin{equation*}
\left\Vert B_{m}\left( f,g\right) \right\Vert _{(p_{3}^{\prime },\theta
}\leq C\left\Vert f\right\Vert _{(p_{1}^{\prime },\theta }\left\Vert
g\right\Vert _{(p_{2}^{\prime },\theta }
\end{equation*}%
for all \ \ $f$, $g\in C^{\infty }\left( G\right) $. That means $B_{m}$
extends to a bounded bilinear operator from $L^{(p_{1}^{\prime },\theta
}\left( G\right) \times L^{(p_{2}^{\prime },\theta }\left( G\right) $ into $%
L^{(p_{3}^{\prime },\theta }\left( G\right) $. We denote by \ $BM_{\theta }%
\left[ (p_{1}^{\prime };(p_{2}^{\prime };(p_{3}^{\prime }\right] $ the space
of all bilinear multipliers of type $\left[ (p_{1}^{\prime };(p_{2}^{\prime
};(p_{3}^{\prime }\right] _{\theta }$ and%
\begin{equation*}
\left\Vert m\right\Vert _{\left[ (p_{1}^{\prime };(p_{2}^{\prime
};(p_{3}^{\prime }\right] _{\theta }}=\left\Vert B_{m}\right\Vert .
\end{equation*}
\end{definition}

\begin{lemma}
(Generalized H\"{o}lder inequality for generalized small Lebesgue spaces)
\end{lemma}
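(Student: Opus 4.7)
The expected statement of the lemma is a H\"{o}lder-type inequality of the form
\[
\|fg\|_{(p_{3}^{\prime},\theta}\leq C\,\|f\|_{(p_{1}^{\prime},\theta}\,\|g\|_{(p_{2}^{\prime},\theta}
\]
under the relation $\tfrac{1}{p_{1}^{\prime}}+\tfrac{1}{p_{2}^{\prime}}=\tfrac{1}{p_{3}^{\prime}}$, equivalently $\tfrac{1}{p_{1}}+\tfrac{1}{p_{2}}=1+\tfrac{1}{p_{3}}$. My plan is to work directly from the infimum-over-decompositions definition of the small Lebesgue norm given in the Introduction, rather than going through duality with the grand Lebesgue space.

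Fix $\eta>0$. First I would choose near-optimal decompositions $f=\sum_{k}f_{k}$ and $g=\sum_{j}g_{j}$ realizing $\|f\|_{(p_{1}^{\prime},\theta}$ and $\|g\|_{(p_{2}^{\prime},\theta}$ to within a factor $1+\eta$, and for each pair $(k,j)$ pick parameters $\varepsilon_{k}\in(0,p_{1}-1)$ and $\delta_{j}\in(0,p_{2}-1)$ nearly attaining the corresponding inner infima. The natural decomposition $fg=\sum_{k,j}f_{k}g_{j}$ is a legitimate candidate for testing $\|fg\|_{(p_{3}^{\prime},\theta}$, so the whole problem reduces to controlling, for each $(k,j)$,
\[
\inf_{\varepsilon\in(0,p_{3}-1)}\varepsilon^{-\theta/(p_{3}-\varepsilon)}\|f_{k}g_{j}\|_{(p_{3}-\varepsilon)^{\prime}}
\]
by the product of the $(k)$- and $(j)$-inner infima. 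For the exponent side, I would select $\varepsilon=\varepsilon(\varepsilon_{k},\delta_{j})$ as the unique solution of
\[
\tfrac{1}{p_{1}-\varepsilon_{k}}+\tfrac{1}{p_{2}-\delta_{j}}=1+\tfrac{1}{p_{3}-\varepsilon},
\]
which reduces to the assumed H\"{o}lder identity at $\varepsilon_{k}=\delta_{j}=0$ and stays in the admissible range by continuity. With this choice the classical H\"{o}lder inequality on the ambient $L^{(p_{i}-\cdot)^{\prime}}(G)$ spaces yields $\|f_{k}g_{j}\|_{(p_{3}-\varepsilon)^{\prime}}\leq\|f_{k}\|_{(p_{1}-\varepsilon_{k})^{\prime}}\,\|g_{j}\|_{(p_{2}-\delta_{j})^{\prime}}$. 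Summing over $k,j$, passing to the infima over the decompositions, and finally sending $\eta\to 0$ should produce the desired bound.

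The main obstacle is the weight comparison
\[
\varepsilon^{-\theta/(p_{3}-\varepsilon)}\leq C\,\varepsilon_{k}^{-\theta/(p_{1}-\varepsilon_{k})}\,\delta_{j}^{-\theta/(p_{2}-\delta_{j})}
\]
needed to pass the inner infima through the product step. A first-order expansion at the origin gives $\varepsilon\sim c_{1}\varepsilon_{k}+c_{2}\delta_{j}$ with positive constants $c_{i}$, and AM--GM then yields $\varepsilon\geq c\sqrt{\varepsilon_{k}\delta_{j}}$; this makes the pointwise comparison plausible, but verifying it uniformly on the admissible range (and absorbing the resulting constant into $C$) is where the most care is required. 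If this direct route runs into trouble, the natural fallback is to invoke the duality between $L^{(p^{\prime},\theta}$ and $L^{p),\theta}$: write $\|fg\|_{(p_{3}^{\prime},\theta}$ as $\sup_{\|h\|_{p_{3}),\theta}\leq 1}\int|fgh|\,d\lambda$ and apply a three-factor H\"{o}lder estimate coupling two small-Lebesgue norms with one grand-Lebesgue norm. Since $\lambda(G)$ is finite, all classical inclusions among the $L^{p-\varepsilon}$ scale are continuous, so no measure-theoretic complication should arise.
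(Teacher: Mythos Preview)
You have guessed the wrong statement. In the paper the target exponent is \emph{not} the sharp H\"older exponent: the lemma assumes $\tfrac{1}{p'}=\tfrac{1}{p_1'}+\tfrac{1}{p_2'}$ together with the side condition $r\cdot r'<p'+1$, and the conclusion is $\|fg\|_{(r',\theta}\le C\|f\|_{(p_1',\theta}\|g\|_{(p_2',\theta}$. Since $r+r'=r\cdot r'<p'+1$, one has $r'<p'$ strictly, i.e.\ the product is placed in a small Lebesgue space that is \emph{strictly larger} than the sharp one. This gap is exactly what the paper exploits: its proof is a one-line chain of continuous embeddings,
\[
\|fg\|_{(r',\theta}\;\lesssim\;\|fg\|_{p'}\;\le\;\|f\|_{p_1'}\|g\|_{p_2'}\;\lesssim\;\|f\|_{(p_1',\theta}\|g\|_{(p_2',\theta},
\]
using $L^{p'}\hookrightarrow L^{r'+\varepsilon}\hookrightarrow L^{(r',\theta}$ on the left, classical H\"older in the middle, and $L^{(p_i',\theta}\hookrightarrow L^{p_i'}$ on the right, followed by a density extension from $C^\infty(G)$. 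No decompositions, no weight comparisons, no duality are needed.

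Your decomposition argument is aimed at the sharper endpoint $r'=p'$, which is a different (and harder) theorem than what the paper states. Even for that target, the step you flag as ``the main obstacle'' is a genuine gap: the comparison $\varepsilon^{-\theta/(p_3-\varepsilon)}\le C\,\varepsilon_k^{-\theta/(p_1-\varepsilon_k)}\delta_j^{-\theta/(p_2-\delta_j)}$ is not established, and the heuristic $\varepsilon\gtrsim\sqrt{\varepsilon_k\delta_j}$ via AM--GM does not by itself yield a uniform bound once the $\varepsilon$-dependent exponents are taken into account. Your duality fallback is reasonable in spirit, but again unnecessary for the paper's actual lemma. If you want to match the paper, drop the sharp-exponent assumption, impose $r\cdot r'<p'+1$, and the proof collapses to the embedding chain above.
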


Let $\frac{1}{p^{\prime }}=\frac{1}{p_{1}^{\prime }}+$ $\frac{1}{%
p_{2}^{\prime }}$ and $r.r^{\prime }<p^{\prime }+1.$ If\ $\ f\in
L^{(p_{1}^{\prime },\theta }$ $\left( G\right) $ and $g\in L^{(p_{2}^{\prime
},\theta }\left( G\right) $, then $fg\in L^{(r^{\prime },\theta }$ $\left(
G\right) $. Furthermore

\begin{equation*}
\left\Vert fg\right\Vert _{(r^{\prime },\theta }\leq C\left\Vert
f\right\Vert _{(p_{1}^{\prime },\theta }\left\Vert g\right\Vert
_{(p_{2}^{\prime },\theta }
\end{equation*}%
for some $C>0.$

\begin{proof}
Take any $f\in C^{\infty }\left( G\right) \subset L^{(p_{1}^{\prime },\theta
}\left( G\right) $ and $g\in C^{\infty }\left( G\right) \subset
L^{(p_{2}^{\prime },\theta }\left( G\right) $. Let $\frac{1}{p^{\prime }}=%
\frac{1}{p_{1}^{\prime }}+\frac{1}{p_{2}^{\prime }}$ and $r.r^{\prime
}<p^{\prime }+1$. Since $1=\frac{1}{r}+$ $\frac{1}{r^{\prime }}$, we have $%
r+r^{\prime }=r.r^{\prime }$. Then using the assumption $r.r^{\prime
}<p^{\prime }+1,$ we write $r+r^{\prime }<p^{\prime }$ $+1$ and so $%
r+r^{\prime }-1<p^{\prime }$. For a fixed $0<\varepsilon \leq r-1$, we have $%
r^{^{\prime }}+\varepsilon \leq r^{\prime }+r-1<p^{\prime }.$ Therefore
since $\mu \left( G\right) <\infty ,$we obtain $L^{p^{\prime }}\left(
G\right) \subset L^{r^{\prime }+\varepsilon }\left( G\right) $. Also we know
that $L^{r^{\prime }+\varepsilon }\left( G\right) \subset L^{(r^{\prime
},\theta }$ $\left( G\right) ,$ $[2]$. Then we have the inclusion $%
L^{p^{\prime }}\left( G\right) \subset L^{r^{\prime }+\varepsilon }\left(
G\right) \subset L^{(r^{\prime },\theta }$ $\left( G\right) $. That means,
there exists $C_{1}>0$ such that%
\begin{equation}
\left\Vert fg\right\Vert _{(r^{\prime },\theta }\leq C_{1}\left\Vert
fg\right\Vert _{p^{\prime }}.  \tag{2.1}
\end{equation}%
If we apply the H\"{o}lder inequality to the right side of $\left(
2.1\right) $, there exists $C_{2}>0$ such that 
\begin{equation}
\left\Vert fg\right\Vert _{p^{\prime }}\leq C_{2}\left\Vert f\right\Vert
_{p_{1}^{\prime }}\left\Vert g\right\Vert _{p_{2}^{\prime }}.  \tag{2.2}
\end{equation}%
On the other hand, since $L^{(p_{1}^{\prime },\theta }\left( G\right)
\subset L^{p_{1}^{\prime }}\left( G\right) $ and $L^{(p_{2}^{\prime },\theta
}\left( G\right) \subset L^{p_{2}^{\prime }}\left( G\right) ,$ $[2]$, we
have 
\begin{equation}
\left\Vert f\right\Vert _{p_{1}^{\prime }}\leq C_{3}\left\Vert f\right\Vert
_{(p_{1}^{\prime },\theta }  \tag{2.3}
\end{equation}%
and%
\begin{equation}
\left\Vert g\right\Vert _{p_{2}^{\prime }}\leq C_{4}\left\Vert g\right\Vert
_{(p_{2}^{\prime },\theta }  \tag{2.4}
\end{equation}%
for some $C_{3}$,\ $C_{4}>0.$ Combining the inequalities (2.1), (2.2), (2.3)
and (2.4), we obtain 
\begin{equation}
\left\Vert fg\right\Vert _{(r^{\prime },\theta }\leq C\left\Vert
f\right\Vert _{(p_{1}^{\prime },\theta }\left\Vert g\right\Vert
_{(p_{2}^{\prime },\theta }  \tag{2.5}
\end{equation}%
where $\ C=C_{1}C_{2}C_{3}C_{4}.$ Now define the bilinear mapping $F\left(
\left( f,g\right) \right) =fg,$ from $\left( C^{\infty }\times C^{\infty
}\right) \left( G\right) $ to $L^{(r^{\prime },\theta }\left( G\right) .$ By 
$\left( 2.5\right) ,$ it is continuous. Since $\left( C^{\infty }\times
C^{\infty }\right) \left( G\right) $ is dense in $L^{(p_{1}^{\prime },\theta
}$ $\left( G\right) \times L^{(p_{2}^{\prime },\theta }\left( G\right) ,$
then there exists has a unique continuous bilinear extension of $F$ denoted $%
F^{\sim }$ from $L^{(p_{1}^{\prime },\theta }$ $\left( G\right) \times
L^{(p_{2}^{\prime },\theta }\left( G\right) $ to $L^{(r^{\prime },\theta
}\left( G\right) .$ Furthermore, the norm of $F^{\sim }$ is equal to the
norm of $F.$ Therefore, for all $f\in L^{(p_{1}^{\prime },\theta }$ $\left(
G\right) $ and $g\in L^{(p_{2}^{\prime },\theta }\left( G\right) $, the
inequality%
\begin{equation*}
\left\Vert fg\right\Vert _{(r^{\prime },\theta }=\left\Vert F\left( \left(
f,g\right) \right) \right\Vert _{(r^{\prime },\theta }\leq C\left\Vert
f\right\Vert _{(p_{1}^{\prime },\theta }\left\Vert g\right\Vert
_{(p_{2}^{\prime },\theta }
\end{equation*}%
is achieved.
\end{proof}

\begin{example}
Let $f\in L^{(9,\theta }$ $\left( G\right) $ and $g\in L^{(10,\theta }\left(
G\right) $. Since $p_{1}^{\prime }=9$ and $p_{2}^{\prime }=10,$ then $\frac{1%
}{p^{\prime }}=\frac{1}{p_{1}^{\prime }}+\frac{1}{p_{2}^{\prime }}=\frac{1}{9%
}+$ $\frac{1}{10},$ and so $p^{\prime }=\frac{90}{19}$. Also let $r=3$ and $%
r^{\prime }=\frac{3}{2}$. Then $\frac{1}{r}+\frac{1}{r^{\prime }}=1$. Hence
we have $r.r^{\prime }=3.\frac{3}{2}=\frac{9}{2}<\frac{90}{19}+1=p^{\prime
}+1$. Therefore from the Lemma 1, we obtain that $fg\in L^{(\frac{3}{2}%
,\theta }$ $\left( G\right) $ and 
\begin{equation*}
\left\Vert fg\right\Vert _{(\frac{3}{2},\theta }\leq C\left\Vert
f\right\Vert _{(9,\theta }\left\Vert g\right\Vert _{(10,\theta }
\end{equation*}%
for some $C>0$.
\end{example}

\begin{theorem}
Let$~1<p_{i}<\infty $, $p_{i}^{\prime }=\frac{p_{i}}{p_{i}-1}$, $\left(
i=1,2,3\right) $ and $\theta >0$. Then $m\in BM_{\theta }\left[
(p_{1}^{\prime };(p_{2}^{\prime };(p_{3}^{\prime }\right] $ if and only if
there exists $C>0$ such that 
\begin{equation*}
\left\vert \dsum\limits_{s\in \hat{G}}\dsum\limits_{t\in \hat{G}}\hat{f}%
\left( s\right) \hat{g}\left( t\right) \hat{h}\left( s+t\right) m\left(
s,t\right) \right\vert \leq C\left\Vert f\right\Vert _{(p_{1}^{\prime
},\theta }\left\Vert g\right\Vert _{(p_{2}^{\prime },\theta }\left\Vert
h\right\Vert _{p_{3}),\theta }
\end{equation*}%
for all \ $f\in L^{(p_{1}^{\prime },\theta }\left( G\right) $, $g\in
L^{(p_{2}^{\prime },\theta }\left( G\right) $ and $h\in $ $\left[ L^{p_{3}}%
\right] _{p_{3}),\theta }$ , where $\left[ L^{p_{3}}\right] _{p_{3}),\theta
} $ is the closure of $C^{\infty }\left( G\right) $ in $L^{p_{3}),\theta
}\left( G\right) $.
\end{theorem}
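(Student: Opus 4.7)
The natural route is duality. The generalized small Lebesgue space $L^{(p_{3}^{\prime},\theta}(G)$ is, up to constants, the associate (K\"othe dual) of the generalized grand Lebesgue space $L^{p_{3}),\theta}(G)$, and $C^{\infty}(G)$ is dense in the closure $\left[L^{p_{3}}\right]_{p_{3}),\theta}$ (this is precisely the characterization of that closure recalled in the introduction). Consequently, for any $F\in L^{(p_{3}^{\prime},\theta}(G)$,
\begin{equation*}
\left\Vert F\right\Vert _{(p_{3}^{\prime},\theta}\asymp \sup\left\{ \left\vert \int_{G} F(x)\overline{h(x)}\, d\lambda(x)\right\vert :\ h\in C^{\infty}(G),\ \left\Vert h\right\Vert _{p_{3}),\theta}\leq 1\right\}.
\end{equation*}
This is the pivot the argument will turn on.

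The second ingredient is a Parseval-type identity. Since $G$ is compact and $\hat{G}$ is discrete, for $f,g,h\in C^{\infty}(G)$ the Fourier coefficients decay rapidly, so the double sum defining $B_{m}(f,g)$ converges absolutely and termwise integration gives
\begin{equation*}
\int_{G}B_{m}(f,g)(x)\overline{h(x)}\, d\lambda(x)=\dsum\limits_{s\in \hat{G}}\dsum\limits_{t\in \hat{G}}\hat{f}(s)\hat{g}(t)m(s,t)\hat{h}(s+t),
\end{equation*}
where the sign convention on $\hat{h}$ is normalized so that the pairing $\int\langle s+t,x\rangle \overline{h(x)}\,dx$ produces $\hat{h}(s+t)$ (this is cosmetic and can be absorbed into the choice of $h$).

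With these two tools in hand, both implications are short. For the forward direction, if $m\in BM_{\theta}\left[ (p_{1}^{\prime };(p_{2}^{\prime };(p_{3}^{\prime }\right]$, then combining the bilinear bound with the H\"older-type duality above and the Parseval identity yields, for $f,g,h\in C^{\infty}(G)$,
\begin{equation*}
\left\vert \sum_{s,t}\hat{f}(s)\hat{g}(t)m(s,t)\hat{h}(s+t)\right\vert \leq \left\Vert B_{m}(f,g)\right\Vert _{(p_{3}^{\prime},\theta}\left\Vert h\right\Vert _{p_{3}),\theta}\leq C\left\Vert f\right\Vert _{(p_{1}^{\prime},\theta}\left\Vert g\right\Vert _{(p_{2}^{\prime},\theta}\left\Vert h\right\Vert _{p_{3}),\theta},
\end{equation*}
after which density of $C^{\infty}(G)$ in the relevant spaces extends the estimate to the full function classes in the statement. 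For the converse, starting from the assumed trilinear estimate and taking the supremum over $h\in C^{\infty}(G)$ with $\left\Vert h\right\Vert _{p_{3}),\theta}\leq 1$, the duality formula recovers $\left\Vert B_{m}(f,g)\right\Vert _{(p_{3}^{\prime},\theta}\leq C\left\Vert f\right\Vert _{(p_{1}^{\prime},\theta}\left\Vert g\right\Vert _{(p_{2}^{\prime},\theta}$, so $m\in BM_{\theta}[\cdot]$.

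The only substantive obstacle is bookkeeping rather than analysis: verifying that the associate-space duality between $L^{p_{3}),\theta}$ and $L^{(p_{3}^{\prime},\theta}$ is applied with the correct constants, and that the supremum in the duality formula can indeed be restricted to $h\in C^{\infty}(G)$ (which is why the hypothesis uses $h\in [L^{p_{3}}]_{p_{3}),\theta}$ rather than arbitrary $h\in L^{p_{3}),\theta}$, since $C^{\infty}(G)$ is not dense in the latter). Everything else — absolute convergence of the Fourier series on $C^{\infty}(G)$, the Parseval identity on a compact abelian group, and the bilinear extension by density — is routine.
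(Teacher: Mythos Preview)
Your proposal is correct and follows essentially the same route as the paper: both arguments rest on the Parseval-type identity rewriting the trilinear sum as $\int_G B_m(f,g)\cdot h$, followed by the H\"older/duality pairing between $L^{(p_3',\theta}(G)$ and $L^{p_3),\theta}(G)$ for the forward direction, and the identification $\left([L^{p_3}]_{p_3),\theta}\right)^{*}\simeq L^{(p_3',\theta}(G)$ for the converse. The only cosmetic difference is that the paper writes the pairing without complex conjugation (picking up a reflection $\tilde{B}_m(f,g)(y)=B_m(f,g)(-y)$ instead) and phrases the converse as ``bounded functional $\Rightarrow$ element of the dual'' rather than via the associate-norm supremum formula, but these are the same duality mechanism.
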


\begin{proof}
Assume that $m\in BM_{\theta }\left[ (p_{1}^{\prime };(p_{2}^{\prime
};(p_{3}^{\prime }\right] $. Let \ $f$, $g\in C^{\infty }\left( G\right)
\subset $ $L^{1}\left( G\right) $ and $h\in $ $C^{\infty }\left( G\right)
\cap L^{p_{3}),\theta }\left( G\right) $ . Since $\mu \left( G\right)
<\infty $, we have $h\in $ $L^{1}\left( G\right) $. So, we write%
\begin{equation*}
\left\vert \dsum\limits_{s\in \hat{G}}\dsum\limits_{t\in \hat{G}}\hat{f}%
\left( s\right) \hat{g}\left( t\right) \hat{h}\left( s+t\right) m\left(
s,t\right) \right\vert
\end{equation*}%
\begin{equation*}
=\left\vert \dsum\limits_{s\in \hat{G}}\dsum\limits_{t\in \hat{G}}\hat{f}%
\left( s\right) \hat{g}\left( t\right) \left\{ \underset{G}{\int }h\left(
y\right) \left\langle s+t,-y\right\rangle d\lambda \left( y\right) \right\}
m\left( s,t\right) \right\vert
\end{equation*}%
\begin{equation*}
=\left\vert \underset{G}{\int }h\left( y\right) B_{m}\left( f,g\right)
\left( -y\right) d\lambda \left( y\right) \right\vert =\left\vert \underset{G%
}{\int }h\left( y\right) \tilde{B}_{m}\left( f,g\right) \left( y\right)
d\lambda \left( y\right) \right\vert
\end{equation*}%
\begin{equation}
\leq \underset{G}{\int }\left\vert h\left( y\right) \right\vert \left\vert 
\tilde{B}_{m}\left( f,g\right) \left( y\right) \right\vert d\lambda \left(
y\right)  \tag{2.6}
\end{equation}%
where $\lambda $ is Haar measure on $G$ and $\tilde{B}_{m}\left( f,g\right)
\left( y\right) =B_{m}\left( f,g\right) \left( -y\right) $. From the
assumption $m\in BM_{\theta }\left[ (p_{1}^{\prime };(p_{2}^{\prime
};(p_{3}^{\prime }\right] $, then $B_{m}\left( f,g\right) \in $ $%
L^{(p_{3}^{\prime },\theta }\left( G\right) $. Since $G$ is group, we obtain 
$\tilde{B}_{m}\left( f,g\right) \in $ $L^{(p_{3}^{\prime },\theta }\left(
G\right) $. By using the H\"{o}lder inequality for generalized small
Lebesgue spaces and the inequality (2.6), we write%
\begin{equation*}
\left\vert \dsum\limits_{s\in \hat{G}}\dsum\limits_{t\in \hat{G}}\hat{f}%
\left( s\right) \hat{g}\left( t\right) \hat{h}\left( s+t\right) m\left(
s,t\right) \right\vert \leq \underset{G}{\int }\left\vert h\left( y\right)
\right\vert \left\vert \tilde{B}_{m}\left( f,g\right) \left( y\right)
\right\vert d\lambda \left( y\right)
\end{equation*}%
\begin{equation}
\leq \left\Vert \tilde{B}_{m}\left( f,g\right) \right\Vert _{(p_{3}^{\prime
},\theta }\left\Vert h\right\Vert _{p_{3}),\theta }=\left\Vert B_{m}\left(
f,g\right) \right\Vert _{(p_{3}^{\prime },\theta }\left\Vert h\right\Vert
_{p_{3}),\theta }  \tag{2.7}
\end{equation}%
Also since $m\in BM_{\theta }\left[ (p_{1}^{\prime };(p_{2}^{\prime
};(p_{3}^{\prime }\right] $, there exists $C>0$ such that 
\begin{equation}
\left\Vert B_{m}\left( f,g\right) \right\Vert _{(p_{3}^{\prime },\theta
}\leq C\left\Vert f\right\Vert _{(p_{1}^{\prime },\theta }\left\Vert
g\right\Vert _{(p_{2}^{\prime },\theta }  \tag{2.8}
\end{equation}%
Combining (2.7) and (2.8), we find%
\begin{equation*}
\left\vert \dsum\limits_{s\in \hat{G}}\dsum\limits_{t\in \hat{G}}\hat{f}%
\left( s\right) \hat{g}\left( t\right) \hat{h}\left( s+t\right) m\left(
s,t\right) \right\vert \leq C\left\Vert f\right\Vert _{(p_{1}^{\prime
},\theta }\left\Vert g\right\Vert _{(p_{2}^{\prime },\theta }\left\Vert
h\right\Vert _{p_{3}),\theta }\text{.}
\end{equation*}%
\qquad For the proof of converse, assume that there exists a constant $C>0$
such that%
\begin{equation*}
\left\vert \dsum\limits_{s\in \hat{G}}\dsum\limits_{t\in \hat{G}}\hat{f}%
\left( s\right) \hat{g}\left( t\right) \hat{h}\left( s+t\right) m\left(
s,t\right) \right\vert \leq C\left\Vert f\right\Vert _{(p_{1}^{\prime
},\theta }\left\Vert g\right\Vert _{(p_{2}^{\prime },\theta }\left\Vert
h\right\Vert _{p_{3}),\theta }
\end{equation*}%
for all \ $f$, $g\in C^{\infty }\left( G\right) $ and , $h\in $ $C^{\infty
}\left( G\right) \cap L^{p_{3}),\theta }\left( G\right) $. From the
assumption and (2.7), we write 
\begin{equation}
\left\vert \underset{G}{\int }h\left( y\right) \tilde{B}_{m}\left(
f,g\right) \left( y\right) d\lambda \left( y\right) \right\vert \leq
C\left\Vert f\right\Vert _{(p_{1}^{\prime },\theta }\left\Vert g\right\Vert
_{(p_{2}^{\prime },\theta }\left\Vert h\right\Vert _{p_{3}),\theta }\text{.}
\tag{2.9}
\end{equation}%
Define a function $\ell $ from $C^{\infty }\left( G\right) \cap
L^{p_{3}),\theta }\left( G\right) $ to $%
\mathbb{C}
$ such that%
\begin{equation*}
\ell \left( h\right) =\underset{G}{\int }h\left( y\right) \tilde{B}%
_{m}\left( f,g\right) \left( y\right) d\lambda \left( y\right) \text{.}
\end{equation*}%
It is clear that the function $\ell $ is linear and bounded by (2.9). \
Since $\overline{C^{\infty }\left( G\right) \cap L^{p_{3}),\theta }\left(
G\right) }$ = $\left[ L^{p_{3}}\right] _{p_{3}),\theta }$ . Then $\ell \in
\left( \left[ L^{p_{3}}\right] _{p_{3}),\theta }\right) ^{\ast
}=L^{p_{3})^{\prime },\theta }\left( G\right) \simeq L^{(p_{3}{}^{\prime
},\theta }\left( G\right) $ and by (2.9), we have%
\begin{equation*}
\left\Vert B_{m}\left( f,g\right) \right\Vert _{(p_{3}^{\prime },\theta
}=\left\Vert \tilde{B}_{m}\left( f,g\right) \right\Vert _{(p_{3}^{\prime
},\theta }=\left\Vert \ell \right\Vert =\underset{\left\Vert h\right\Vert
_{p_{3}),\theta }\leq 1}{\sup }\frac{\left\vert l\left( h\right) \right\vert 
}{\left\Vert h\right\Vert _{p_{3}),\theta }}
\end{equation*}%
\begin{equation*}
\leq \underset{\left\Vert h\right\Vert _{p_{3}),\theta }\leq 1}{\sup }\frac{%
C\left\Vert f\right\Vert _{(p_{1}^{\prime },\theta }\left\Vert g\right\Vert
_{(p_{2}^{\prime },\theta }\left\Vert h\right\Vert _{p_{3}^{\prime },\omega
_{3}^{-1}}}{\left\Vert h\right\Vert _{p_{3}),\theta }}\leq C\left\Vert
f\right\Vert _{(p_{1}^{\prime },\theta }\left\Vert g\right\Vert
_{(p_{2}^{\prime },\theta }\text{.}
\end{equation*}%
Hence, we obtain $m\in BM_{\theta }\left[ (p_{1}^{\prime };(p_{2}^{\prime
};(p_{3}^{\prime }\right] $
\end{proof}

\begin{theorem}
Let $\ m\in BM_{\theta }\left[ (p_{1}^{\prime };(p_{2}^{\prime
};(p_{3}^{\prime }\right] $. Then

\textbf{a)} $T_{\left( s_{0},t_{0}\right) }m\in BM_{\theta }\left[
(p_{1}^{\prime };(p_{2}^{\prime };(p_{3}^{\prime }\right] $ for each $\left(
s_{0},t_{0}\right) \in \hat{G}\times \hat{G}$ and 
\begin{equation*}
\left\Vert T_{\left( s_{0},t_{0}\right) }m\right\Vert _{\left[
(p_{1}^{\prime };(p_{2}^{\prime };(p_{3}^{\prime }\right] _{\theta
}}=\left\Vert m\right\Vert _{\left[ (p_{1}^{\prime };(p_{2}^{\prime
};(p_{3}^{\prime }\right] _{\theta }}\text{.}
\end{equation*}

\textbf{b)}$M_{t_{0}}^{2}M_{s_{0}}^{1}m\in BM_{\theta }\left[ (p_{1}^{\prime
};(p_{2}^{\prime };(p_{3}^{\prime }\right] $ for each $\left(
s_{0},t_{0}\right) \in G\times G$ and%
\begin{equation*}
\left\Vert M_{t_{0}}^{2}M_{s_{0}}^{1}m\right\Vert _{\left[ (p_{1}^{\prime
};(p_{2}^{\prime };(p_{3}^{\prime }\right] _{\theta }}=\left\Vert
m\right\Vert _{\left[ (p_{1}^{\prime };(p_{2}^{\prime };(p_{3}^{\prime }%
\right] _{\theta }}
\end{equation*}%
where $M_{s_{0}}^{1}m\left( s,t\right) =\left\langle s,s_{0}\right\rangle
m\left( s,t\right) $ and $M_{t_{0}}^{2}m\left( s,t\right) =\left\langle
t,t_{0}\right\rangle m\left( s,t\right) .$
\end{theorem}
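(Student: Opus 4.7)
\emph{Plan.} Both parts follow the same template: I move the symbol modification (translation on $\hat G\times\hat G$ in (a), modulation in (b)) off of $m$ and onto the inputs $f,g$, at worst picking up a unimodular prefactor. Since modulation and translation are isometries of the small Lebesgue spaces $L^{(p_i',\theta}(G)$, the bilinear-multiplier bound for $m$ then transfers to the modified symbol with the same constant, and applying the same computation to the inverse operation yields the reverse inequality.

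\emph{Part (a).} For $T_{(s_0,t_0)}m$, I would perform the change of summation variables $u=s-s_0$, $v=t-t_0$ in the defining double series of $B_{T_{(s_0,t_0)}m}(f,g)(x)$ and invoke the Fourier identity $\hat f(u+s_0)=\widehat{M_{-s_0}f}(u)$ (immediate from the paper's definitions of $M_\xi$ and $\hat{\cdot}$) to obtain
\begin{equation*}
B_{T_{(s_0,t_0)}m}(f,g)(x) \;=\; \langle s_0+t_0,x\rangle\, B_m(M_{-s_0}f,\,M_{-t_0}g)(x).
\end{equation*}
Taking moduli kills the character factor, so $\Vert B_{T_{(s_0,t_0)}m}(f,g)\Vert_{(p_3',\theta}=\Vert B_m(M_{-s_0}f,M_{-t_0}g)\Vert_{(p_3',\theta}$. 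Since $|M_{-s_0}f|=|f|$ and the small Lebesgue norm depends only on the absolute value of its argument, the hypothesis $m\in BM_\theta\left[(p_1';(p_2';(p_3'\right]$ yields $\Vert T_{(s_0,t_0)}m\Vert_{\left[(p_1';(p_2';(p_3'\right]_\theta}\le\Vert m\Vert_{\left[(p_1';(p_2';(p_3'\right]_\theta}$. Applying the same identity to the symbol $T_{(s_0,t_0)}m$ with shift $(-s_0,-t_0)$ recovers $m$ and produces the reverse inequality, hence equality of norms.

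\emph{Part (b).} For $M_{t_0}^2 M_{s_0}^1 m$, I would use the dual identity $\langle s,s_0\rangle\hat f(s)=\widehat{T_{-s_0}f}(s)$, again a direct computation from the definition of $\hat{\cdot}$, to obtain
\begin{equation*}
B_{M_{t_0}^2 M_{s_0}^1 m}(f,g)(x) \;=\; B_m(T_{-s_0}f,\,T_{-t_0}g)(x),
\end{equation*}
after which I invoke translation invariance of $\Vert\cdot\Vert_{(p_i',\theta}$, itself inherited from translation invariance of Haar measure in the defining $L^{(p-\varepsilon)'}$-integrals. Reversing the roles of $(s_0,t_0)$ and $(-s_0,-t_0)$ yields the matching lower bound.

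\emph{Main obstacle.} The only nontrivial ingredient is verifying that translation and modulation act as genuine isometries on the auxiliary norm $\Vert\cdot\Vert_{(p',\theta}$. For a candidate decomposition $g=\sum_k g_k$, the translate $\sum_k T_x g_k$ (resp.\ modulate $\sum_k M_\xi g_k$) is a valid decomposition of $T_x g$ (resp.\ $M_\xi g$) with identical value of the inner functional, because $\int|g_k(y-x)|^{(p-\varepsilon)'}\,d\lambda(y)=\int|g_k|^{(p-\varepsilon)'}\,d\lambda$ by translation invariance of $\lambda$, and $|M_\xi g_k|=|g_k|$ pointwise. This makes the infima match, which is exactly what drives the symbol-to-input transfer used in both parts above.
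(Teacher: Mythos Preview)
Your proposal is correct and follows essentially the same route as the paper: both derive the key identities $B_{T_{(s_0,t_0)}m}(f,g)(x)=\langle s_0+t_0,x\rangle\,B_m(M_{-s_0}f,M_{-t_0}g)(x)$ and $B_{M_{t_0}^2M_{s_0}^1m}(f,g)=B_m(T_{-s_0}f,T_{-t_0}g)$ via the standard Fourier relations, then exploit that modulation and translation are isometries of $L^{(p_i',\theta}(G)$. The only cosmetic difference is that the paper obtains norm equality by rewriting the defining supremum as a sup over $(M_{-s_0}f,M_{-t_0}g)$ (resp.\ $(T_{-s_0}f,T_{-t_0}g)$) ranging over the same unit balls, whereas you obtain it by applying the one-sided inequality twice with opposite shifts; these are logically equivalent.
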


\begin{proof}
\textbf{a) }Let \ $f$, $g\in C^{\infty }\left( G\right) $. Let say $%
s-s_{0}=u $ and $t-t_{0}=v$. Then%
\begin{equation*}
B_{T_{\left( s_{0},t_{0}\right) }m}\left( f,g\right) \left( x\right)
=\dsum\limits_{s\in \hat{G}}\dsum\limits_{t\in \hat{G}}\hat{f}\left(
s\right) \hat{g}\left( t\right) T_{\left( s_{0},t_{0}\right) }m\left(
s,t\right) \left\langle s+t,x\right\rangle
\end{equation*}%
\begin{equation*}
=\dsum\limits_{u\in \hat{G}}\dsum\limits_{v\in \hat{G}}\hat{f}\left(
u+s_{0}\right) \hat{g}\left( v+t_{0}\right) m\left( u,v\right) \left\langle
u+s_{0}+v+t_{0},x\right\rangle
\end{equation*}

\begin{equation*}
=\dsum\limits_{u\in \hat{G}}\dsum\limits_{v\in \hat{G}}\hat{f}\left(
u+s_{0}\right) \hat{g}\left( v+t_{0}\right) m\left( u,v\right) \left\langle
(s_{0}+t_{0})+(u+v),x\right\rangle
\end{equation*}

\begin{equation}
=\dsum\limits_{u\in \hat{G}}\dsum\limits_{v\in \hat{G}}T_{-s_{0}}\hat{f}%
\left( u\right) T_{-t_{0}}\hat{g}\left( v\right) m\left( u,v\right)
\left\langle s_{0}+t_{0},x\right\rangle \left\langle u+v,x\right\rangle 
\tag{2.10}
\end{equation}

It is known that

\begin{equation*}
\left( M_{-s_{0}}f\right) \symbol{94}\left( u\right) =\underset{G}{\int }%
M_{-s_{0}}f\left( y\right) \left\langle u,-y\right\rangle d\lambda \left(
y\right)
\end{equation*}%
\begin{equation*}
=\underset{G}{\int }\left\langle -s_{0},y\right\rangle f\left( y\right)
\left\langle u,-y\right\rangle d\lambda \left( y\right) =\underset{G}{\int }%
\left\langle s_{0},-y\right\rangle f\left( y\right) \left\langle
u,-y\right\rangle d\lambda \left( y\right)
\end{equation*}%
\begin{equation}
=\underset{G}{\int }f\left( y\right) \left\langle s_{0}+u,-y\right\rangle
d\lambda \left( y\right) =\hat{f}\left( s_{0}+u\right) =T_{-s_{0}}\hat{f}%
\left( u\right) .  \tag{2.11}
\end{equation}%
Similarly we obtain $\left( M_{-t_{0}}g\right) \symbol{94}=T_{-t_{0}}\hat{g}%
. $ Combining (2.10) and (2.11), we have

\begin{equation*}
B_{T_{\left( s_{0},t_{0}\right) }m}\left( f,g\right) \left( x\right)
=\dsum\limits_{u\in \hat{G}}\dsum\limits_{v\in \hat{G}}T_{-s_{0}}\hat{f}%
\left( u\right) T_{-t_{0}}\hat{g}\left( v\right) m\left( u,v\right)
\left\langle s_{0}+t_{0},x\right\rangle \left\langle s+t,x\right\rangle
\end{equation*}%
\begin{equation*}
=\left\langle s_{0}+t_{0},x\right\rangle \dsum\limits_{u\in \hat{G}%
}\dsum\limits_{v\in \hat{G}}\left( M_{-s_{0}}f\right) \symbol{94}\left(
u\right) \left( M_{-t_{0}}g\right) \symbol{94}\left( v\right) m\left(
u,v\right) \left\langle s+t,x\right\rangle
\end{equation*}%
\begin{equation}
=\left\langle s_{0}+t_{0},x\right\rangle B_{m}\left(
M_{-s_{0}}f,M_{-t_{0}}g\right) \left( x\right) \text{.}  \tag{2.12}
\end{equation}%
Using the assumption $m\in BM_{\theta }[(p_{1}^{\prime };(p_{2}^{\prime
};(p_{3}^{\prime }]$ and the equality (2.12) we have 
\begin{equation*}
\left\Vert B_{T_{\left( \xi _{0},\eta _{0}\right) }m}\left( f,g\right)
\right\Vert _{(p_{3}^{\prime },\theta }=\left\Vert \left\langle
s_{0}+t_{0},x\right\rangle B_{m}\left( M_{-s_{0}}f,M_{-t_{0}}g\right)
\right\Vert _{(p_{3}^{\prime },\theta }
\end{equation*}%
\begin{equation*}
=\left\Vert B_{m}\left( M_{-s_{0}}f,M_{-t_{0}}g\right) \right\Vert
_{(p_{3}^{\prime },\theta }
\end{equation*}%
\begin{equation*}
\leq C\left\Vert M_{-s_{0}}f\right\Vert _{(p_{1}^{\prime },\theta
}\left\Vert M_{-t_{0}}g\right\Vert _{(p_{2}^{\prime },\theta }
\end{equation*}%
\begin{equation*}
=C\left\Vert \left\langle -s_{0},.\right\rangle f\left( .\right) \right\Vert
_{(p_{1}^{\prime },\theta }\left\Vert \left\langle -t_{0},.\right\rangle
g\left( .\right) \right\Vert _{(p_{2}^{\prime },\theta }
\end{equation*}%
\begin{equation*}
=C\left\Vert f\right\Vert _{(p_{1}^{\prime },\theta }\left\Vert g\right\Vert
_{(p_{2}^{\prime },\theta }
\end{equation*}%
for some $C>0$. Thus $T$ $_{_{\left( s_{0},t_{0}\right) }m}\in BM_{\theta
}[(p_{1}^{\prime };(p_{2}^{\prime };(p_{3}^{\prime }]$. Then by definition
1, 
\begin{equation*}
\left\Vert T_{\left( s_{0},t_{0}\right) }m\right\Vert _{\left[
(p_{1}^{\prime };(p_{2}^{\prime };(p_{3}^{\prime }\right] _{\theta
}}=\left\Vert B_{T_{\left( s_{0},t_{0}\right) }m}\right\Vert .
\end{equation*}

This implies

\begin{equation*}
\left\Vert T_{\left( s_{0},t_{0}\right) }m\right\Vert _{\left[
(p_{1}^{\prime };(p_{2}^{\prime };(p_{3}^{\prime }\right] _{\theta
}}=\left\Vert B_{T_{\left( s_{0},t_{0}\right) }m}\right\Vert
\end{equation*}%
\begin{equation*}
=\sup \left\{ \frac{\left\Vert B_{T_{\left( \xi _{0},\eta _{0}\right)
}m}\left( f,g\right) \right\Vert _{(p_{3}^{\prime },\theta }}{\left\Vert
f\right\Vert _{(p_{1}^{\prime },\theta }\left\Vert g\right\Vert
_{(p_{2}^{\prime },\theta }}:\left\Vert f\right\Vert _{(p_{1}^{\prime
},\theta }\leq 1\text{, }\left\Vert g\right\Vert _{(p_{2}^{\prime },\theta
}\leq 1\right\}
\end{equation*}%
\begin{equation*}
=\sup \left\{ \frac{\left\Vert B_{m}\left( M_{-s_{0}}f,M_{-t_{0}}g\right)
\right\Vert _{(p_{3}^{\prime },\theta }}{\left\Vert M_{-s_{0}}f\right\Vert
_{(p_{1}^{\prime },\theta }\left\Vert M_{-t_{0}}g\right\Vert
_{(p_{2}^{\prime },\theta }}:\left\Vert M_{-s_{0}}f\right\Vert
_{(p_{1}^{\prime },\theta }\leq 1\text{, }\left\Vert M_{-t_{0}}g\right\Vert
_{(p_{2}^{\prime },\theta }\leq 1\right\}
\end{equation*}%
\begin{equation*}
=\left\Vert B_{m}\right\Vert =\left\Vert m\right\Vert _{\left[
(p_{1}^{\prime };(p_{2}^{\prime };(p_{3}^{\prime }\right] _{\theta }}\text{.}
\end{equation*}

\textbf{b) }Let $f$, $g\in C^{\infty }\left( G\right) $. We have 
\begin{equation*}
B_{M_{t_{0}}^{2}M_{s_{0}}^{1}m}\left( f,g\right) \left( x\right)
=\dsum\limits_{s\in \hat{G}}\dsum\limits_{t\in \hat{G}}\hat{f}\left(
s\right) \hat{g}\left( t\right) M_{t_{0}}^{2}M_{s_{0}}^{1}\left( m\left(
s,t\right) \right) \left\langle s+t,x\right\rangle
\end{equation*}%
\begin{equation*}
=\dsum\limits_{s\in \hat{G}}\dsum\limits_{t\in \hat{G}}\hat{f}\left(
s\right) \hat{g}\left( t\right) M_{t_{0}}^{2}\left( \left\langle
s,s_{0}\right\rangle m\left( s,t\right) \right) \left\langle
s+t,x\right\rangle
\end{equation*}%
\begin{equation*}
=\dsum\limits_{s\in \hat{G}}\dsum\limits_{t\in \hat{G}}\hat{f}\left(
s\right) \hat{g}\left( t\right) \left\langle t,t_{0}\right\rangle
\left\langle s,s_{0}\right\rangle m\left( s,t\right) \left\langle
s+t,x\right\rangle
\end{equation*}%
\begin{equation*}
=\dsum\limits_{s\in \hat{G}}\dsum\limits_{t\in \hat{G}}\left\langle
s,s_{0}\right\rangle \hat{f}\left( s\right) \left\langle
t,t_{0}\right\rangle \hat{g}\left( t\right) m\left( s,t\right) \left\langle
s+t,x\right\rangle
\end{equation*}%
\begin{equation}
=\dsum\limits_{s\in \hat{G}}\dsum\limits_{t\in \hat{G}}M_{s_{0}}\hat{f}%
\left( s\right) M_{t_{0}}\hat{g}\left( t\right) m\left( s,t\right)
\left\langle s+t,x\right\rangle  \tag{2.13}
\end{equation}%
If we put $y+s_{0}=u$, we have 
\begin{equation*}
\left( T_{-s_{0}}f\right) \symbol{94}\left( s\right) =\underset{G}{\int }%
T_{-s_{0}}f\left( y\right) \left\langle s,-y\right\rangle d\lambda \left(
y\right)
\end{equation*}%
\begin{equation*}
=\underset{G}{\int }f\left( y+s_{0}\right) \left\langle s,-y\right\rangle
d\lambda \left( y\right) =\underset{G}{\int }f\left( u\right) \left\langle
s,-u+s_{0}\right\rangle d\lambda \left( u\right)
\end{equation*}%
\begin{equation*}
=\underset{G}{\int }f\left( u\right) \left\langle s,-u\right\rangle
\left\langle s,s_{0}\right\rangle d\lambda \left( u\right) =\left\langle
s,s_{0}\right\rangle \underset{G}{\int }f\left( u\right) \left\langle
s,-u\right\rangle d\lambda \left( u\right)
\end{equation*}%
\begin{equation*}
=\left\langle s,s_{0}\right\rangle \hat{f}(s)=M_{s_{0}}\hat{f}\left(
s\right) .
\end{equation*}%
Similarly $\left( T_{-t_{0}}g\right) \symbol{94}=M_{t_{0}}\hat{g}$ . Then
from (2.13)%
\begin{equation*}
B_{M_{t_{0}}^{2}M_{s_{0}}^{1}m}\left( f,g\right) \left( x\right)
=\dsum\limits_{s\in \hat{G}}\dsum\limits_{t\in \hat{G}}\left(
T_{-s_{0}}f\right) \symbol{94}\left( s\right) \left( T_{-t_{0}}g\right) 
\symbol{94}\left( t\right) m\left( s,t\right) \left\langle
s+t,x\right\rangle =B_{m}\left( T_{-s_{0}}f,T_{-t_{0}}g\right) \left(
x\right) \text{.}
\end{equation*}%
Since $m\in BM_{\theta }[(p_{1}^{\prime };(p_{2}^{\prime };(p_{3}^{\prime }]$%
, 
\begin{equation*}
\left\Vert B_{M_{t_{0}}^{2}M_{s_{0}}^{1}m}\left( f,g\right) \right\Vert
_{(p_{3}^{\prime },\theta }=\left\Vert B_{m}\left(
T_{-s_{0}}f,T_{-t_{0}}g\right) \right\Vert _{(p_{3}^{\prime },\theta }\leq
\left\Vert B_{m}\right\Vert \left\Vert T_{-s_{0}}f\right\Vert
_{(p_{1}^{\prime },\theta }\left\Vert T_{-t_{0}}g\right\Vert
_{(p_{2}^{\prime },\theta }
\end{equation*}%
\begin{equation}
\leq \left\Vert B_{m}\right\Vert \left\Vert f\right\Vert _{(p_{1}^{\prime
},\theta }\left\Vert g\right\Vert _{(p_{2}^{\prime },\theta }  \tag{2.14}
\end{equation}%
and so $M_{t_{0}}^{2}M_{s_{0}}^{1}m\in BM_{\theta }\left[ (p_{1}^{\prime
};(p_{2}^{\prime };(p_{3}^{\prime }\right] $. Finally (2.14), we achieve
that 
\begin{equation*}
\left\Vert M_{t_{0}}^{2}M_{s_{0}}^{1}m\right\Vert _{\left[ (p_{1}^{\prime
};(p_{2}^{\prime };(p_{3}^{\prime }\right] _{\theta }}=\sup \left\{ \frac{%
\left\Vert B_{M_{t_{0}}^{2}M_{s_{0}}^{1}m}\left( f,g\right) \right\Vert
_{(p_{3}^{\prime },\theta }}{\left\Vert f\right\Vert _{(p_{1}^{\prime
},\theta }\left\Vert g\right\Vert _{(p_{2}^{\prime },\theta }}:\left\Vert
f\right\Vert _{(p_{1}^{\prime },\theta }\leq 1\text{, }\left\Vert
g\right\Vert _{(p_{2}^{\prime },\theta }\leq 1\right\}
\end{equation*}%
\begin{equation*}
=\sup \left\{ \frac{\left\Vert B_{m}\left( T_{-s_{0}}f,T_{-t_{0}}g\right)
\right\Vert _{(p_{3}^{\prime },\theta }}{\left\Vert T_{-s_{0}}f\right\Vert
_{(p_{1}^{\prime },\theta }\left\Vert T_{-t_{0}}g\right\Vert
_{(p_{2}^{\prime },\theta }}:\left\Vert T_{-s_{0}}f\right\Vert
_{(p_{1}^{\prime },\theta }\leq 1\text{, }\left\Vert T_{-t_{0}}g\right\Vert
_{(p_{2}^{\prime },\theta }\leq 1\right\}
\end{equation*}

\begin{equation*}
=\left\Vert m\right\Vert _{\left[ (p_{1}^{\prime };(p_{2}^{\prime
};(p_{3}^{\prime }\right] _{\theta }}\text{.}
\end{equation*}
\end{proof}

Let $A$ be an automorphism of $G$. The $\lambda \circ A$ is a nontrival Haar
measure on $G$. For any Borel set $U\subseteq G$, the modules of $A$ is
defined by $\left\vert A\right\vert =$ $\lambda \left( AU\right) $ such that 
$d\lambda \left( Ax\right) =\left\vert A\right\vert d\lambda \left( x\right)
.$ Also the adjoint $A^{\ast }$of $A$ is an automorphism of $\hat{G}$. The
adjoint operator $A^{\ast }$ is defined by $\left\langle Ax,s\right\rangle
=\left\langle x,A^{\ast }s\right\rangle $ for $x\in G$ and $s\in \hat{G}$.
Furthermore the $\mu \circ A^{\ast }$ is a nontrival Haar measure on $\hat{G}
$ such that $d\mu \left( A^{\ast }s\right) =\left\vert A^{\ast }\right\vert
d\mu \left( s\right) $. It is known that $\left\vert A\right\vert
=\left\vert A^{\ast }\right\vert $, $\left( A^{\ast }\right) ^{-1}=\left(
A^{-1}\right) ^{\ast }$and $\left\vert A\right\vert ^{-1}=\left\vert
A^{-1}\right\vert $, $\left[ 1\right] .$

\begin{definition}
Let $A$ be an automorphism of $G$. The dilation operator $D_{A}^{p^{\prime
}} $ on $L^{(p^{\prime },\theta }\left( G\right) $ is defined by

\begin{equation*}
D_{A}^{p^{\prime }}f\left( x\right) =\left\vert A\right\vert ^{\frac{1}{%
p^{\prime }}}f\left( Ax\right) .
\end{equation*}
\end{definition}

\begin{lemma}
Let $A$ be an automorphism of G and $f\in L^{(p^{\prime },\theta }\left(
G\right) $. Then $D_{A}^{p^{\prime }}$ $f\in L^{(p^{\prime },\theta }\left(
G\right) $. Moreover%
\begin{equation*}
\left\Vert D_{A}^{p^{\prime }}f\right\Vert _{(p^{\prime },\theta
}=\left\vert A\right\vert ^{\frac{1}{p^{\prime }}}\left\Vert f\right\Vert
_{(p^{\prime },\theta }\leq \left\Vert f\right\Vert _{(p^{\prime },\theta }%
\text{, if \ }\left\vert A\right\vert <1
\end{equation*}%
\begin{equation*}
\left\Vert D_{A}^{p^{\prime }}f\right\Vert _{(p^{\prime },\theta
}=\left\Vert f\right\Vert _{(p^{\prime },\theta }\text{, \ \ \ \ \ \ \ \ \ \
\ \ \ \ \ \ \ \ \ \ \ \ \ if \ }\left\vert A\right\vert \geq 1.
\end{equation*}
\end{lemma}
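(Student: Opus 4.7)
The plan is to relate the norm of the dilation $D_A^{p'} f$ to that of $f$ by transferring the classical $L^q$ change-of-variables through the infimum-over-decompositions structure of $\|\cdot\|_{(p',\theta}$.

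First I would record how $D_A^{p'}$ acts on ordinary $L^q$ norms. Substituting $y = Ax$ and using $d\lambda(y) = |A|\, d\lambda(x)$, a direct computation gives
\[
\|D_A^{p'} f\|_q^{q} = \int_G |A|^{q/p'} |f(Ax)|^{q}\, d\lambda(x) = |A|^{q/p' - 1} \|f\|_q^{q},
\]
so $\|D_A^{p'} f\|_q = |A|^{1/p' - 1/q} \|f\|_q$. In particular $\|D_A^{p'} f\|_{p'} = \|f\|_{p'}$, and for $q = (p-\varepsilon)'$ the transformation factor is $|A|^{\varepsilon/(p(p-\varepsilon))}$, since $1/p' - 1/(p-\varepsilon)' = 1/(p-\varepsilon) - 1/p = \varepsilon/(p(p-\varepsilon))$.

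Next I would exploit that $D_A^{p'}$ is linear and commutes with countable sums: if $f = \sum_k f_k$ then $D_A^{p'} f = \sum_k D_A^{p'} f_k$, and the correspondence $f_k \mapsto D_A^{p'} f_k$ is a bijection between decompositions of $f$ and of $D_A^{p'} f$, inverted by $D_{A^{-1}}^{p'}$. Substituting the step-one identity into the definition of $\|\cdot\|_{(p',\theta}$ rewrites $\|D_A^{p'} f\|_{(p',\theta}$ as
\[
\inf_{f = \sum f_k} \sum_k \inf_{0 < \varepsilon < p-1} \varepsilon^{-\theta/(p-\varepsilon)}\, |A|^{\varepsilon/(p(p-\varepsilon))}\, \|f_k\|_{(p-\varepsilon)'}.
\]

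Finally I would control the extra factor. The exponent $\varepsilon/(p(p-\varepsilon))$ is positive and monotone in $\varepsilon$, running from $0$ at $\varepsilon = 0$ to $1/p'$ at $\varepsilon = p-1$, so $|A|^{\varepsilon/(p(p-\varepsilon))}$ is trapped between $1$ and $|A|^{1/p'}$, with the direction of the trapping determined by whether $|A|<1$ or $|A|\ge 1$. Feeding this into the inner infimum over $\varepsilon$ produces one-sided comparisons with $\|f\|_{(p',\theta}$; applying the same estimates to $D_{A^{-1}}^{p'}$ using $|A^{-1}| = |A|^{-1}$ and $D_{A^{-1}}^{p'} \circ D_A^{p'} = \mathrm{Id}$ reverses the inequalities and forces equality. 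The main obstacle is precisely that the transformation factor $|A|^{\varepsilon/(p(p-\varepsilon))}$ is $\varepsilon$-dependent rather than the clean constant $|A|^{1/p'}$ which would appear if $D_A^{p'}$ were isometric on each $L^{(p-\varepsilon)'}$; the case split on $|A|$ and the involution argument are the devices that resolve this non-uniformity.
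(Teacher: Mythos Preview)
Your change-of-variables computation and the bijection between decompositions of $f$ and of $D_A^{p'}f$ are correct, and you have correctly isolated the $\varepsilon$-dependent transformation factor $|A|^{\varepsilon/(p(p-\varepsilon))}$, which indeed ranges over $[1,|A|^{1/p'}]$ (or $[|A|^{1/p'},1]$) as $\varepsilon$ runs over $(0,p-1)$. The gap is in the last step: the involution does \emph{not} force equality. For $|A|\ge 1$ your bounds give the sandwich
\[
\|f\|_{(p',\theta}\;\le\;\|D_A^{p'}f\|_{(p',\theta}\;\le\;|A|^{1/p'}\|f\|_{(p',\theta},
\]
and applying the same estimates to $D_{A^{-1}}^{p'}$ (where $|A^{-1}|<1$, so the factor lies in $[|A|^{-1/p'},1]$) with $g=D_A^{p'}f$ and then composing via $D_{A^{-1}}^{p'}\circ D_A^{p'}=\mathrm{Id}$ returns precisely the \emph{same} two inequalities: from $|A^{-1}|^{1/p'}\|g\|\le\|D_{A^{-1}}^{p'}g\|\le\|g\|$ one gets $\|f\|\le\|D_A^{p'}f\|$ and $\|D_A^{p'}f\|\le|A|^{1/p'}\|f\|$ again. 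The involution adds no new information; you are left with a sandwich of multiplicative width $|A|^{1/p'}$, not the claimed equality. The analogous remark applies when $|A|<1$.

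The paper takes a different route. After arriving at the same expression
\[
\inf_{f=\sum f_k}\sum_k\,\inf_{0<\varepsilon<p-1}\,|A|^{1/(p-\varepsilon)-1/p}\,\varepsilon^{-\theta/(p-\varepsilon)}\|f_k\|_{(p-\varepsilon)'},
\]
it \emph{separates} the inner infimum into the product $\bigl(\inf_{\varepsilon}|A|^{1/(p-\varepsilon)-1/p}\bigr)\cdot\bigl(\inf_{\varepsilon}\varepsilon^{-\theta/(p-\varepsilon)}\|f_k\|_{(p-\varepsilon)'}\bigr)$ and evaluates the first factor explicitly as $|A|^{1/p'}$ or $1$ according to the sign of $\log|A|$. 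Your diagnosis of the obstacle---that the dilation factor is $\varepsilon$-dependent rather than a clean constant---is exactly the point; the paper's device for handling it is this factorisation of the infimum, not an involution. You should be aware, though, that $\inf_\varepsilon(a_\varepsilon b_\varepsilon)=(\inf_\varepsilon a_\varepsilon)(\inf_\varepsilon b_\varepsilon)$ is not valid in general, so if you follow the paper's line you will need to justify that step as well.
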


\begin{proof}
Let $A$ be an automorphism of G and $f\in L^{(p^{\prime },\theta }\left(
G\right) $. If we say $Ax=u$ and use the equality $\left\vert A\right\vert
^{-1}=\left\vert A^{-1}\right\vert ,$ then 
\begin{equation*}
\left\Vert D_{A}^{p^{\prime }}f\right\Vert _{(p^{\prime },\theta }=\underset{%
D_{A}^{p^{\prime }}f=\underset{k=1}{\overset{\infty }{\dsum }}%
D_{A}^{p^{\prime }}f_{k}}{\inf }\left\{ \underset{k=1}{\overset{\infty }{%
\dsum }}\underset{0<\varepsilon <p-1}{\inf }\varepsilon ^{-\frac{\theta }{%
p-\varepsilon }}\left( \underset{G}{\doint }\left\vert D_{A}^{p^{\prime
}}f_{k}\left( x\right) \right\vert ^{\left( p-\varepsilon \right) ^{\prime
}}d\lambda \left( x\right) \right) ^{\frac{1}{\left( p-\varepsilon \right)
^{\prime }}}\right\}
\end{equation*}%
\begin{equation*}
=\underset{D_{A}^{p^{\prime }}f=\underset{k=1}{\overset{\infty }{\dsum }}%
D_{A}^{p^{\prime }}f_{k}}{\inf }\left\{ \underset{k=1}{\overset{\infty }{%
\dsum }}\underset{0<\varepsilon <p-1}{\inf }\varepsilon ^{-\frac{\theta }{%
p-\varepsilon }}\left( \underset{G}{\doint }\left\vert \left\vert
A\right\vert ^{\frac{1}{p^{\prime }}}f_{k}\left( Ax\right) \right\vert
^{\left( p-\varepsilon \right) ^{\prime }}d\lambda \left( x\right) \right) ^{%
\frac{1}{\left( p-\varepsilon \right) ^{\prime }}}\right\}
\end{equation*}

\begin{equation*}
=\underset{f=\underset{k=1}{\overset{\infty }{\dsum }}f_{k}}{\inf }\left\{ 
\underset{k=1}{\overset{\infty }{\dsum }}\underset{0<\varepsilon <p-1}{\inf }%
\varepsilon ^{-\frac{\theta }{p-\varepsilon }}\left( \underset{G}{\doint }%
\left\vert \left\vert A\right\vert ^{\frac{1}{p^{\prime }}}f_{k}\left(
u\right) \right\vert ^{\left( p-\varepsilon \right) ^{\prime }}d\lambda
\left( A^{-1}u\right) \right) ^{\frac{1}{\left( p-\varepsilon \right)
^{\prime }}}\right\}
\end{equation*}%
\begin{equation*}
=\underset{f=\underset{k=1}{\overset{\infty }{\dsum }}f_{k}}{\inf }\left\{ 
\underset{k=1}{\overset{\infty }{\dsum }}\underset{0<\varepsilon <p-1}{\inf }%
\varepsilon ^{-\frac{\theta }{p-\varepsilon }}\left( \underset{G}{\doint }%
\left\vert f_{k}\left( u\right) \right\vert ^{\left( p-\varepsilon \right)
^{\prime }}\left\vert A\right\vert ^{\frac{\left( p-\varepsilon \right)
^{\prime }}{p^{\prime }}}\left\vert A\right\vert ^{-1}d\lambda \left(
u\right) \right) ^{\frac{1}{\left( p-\varepsilon \right) ^{\prime }}}\right\}
\end{equation*}%
\begin{equation*}
=\underset{f=\underset{k=1}{\overset{\infty }{\dsum }}f_{k}}{\inf }\left\{ 
\underset{k=1}{\overset{\infty }{\dsum }}\underset{0<\varepsilon <p-1}{\inf }%
\varepsilon ^{-\frac{\theta }{p-\varepsilon }}\left\vert A\right\vert ^{%
\frac{1}{p^{\prime }}}\left\vert A\right\vert ^{-\frac{1}{\left(
p-\varepsilon \right) ^{\prime }}}\left( \underset{G}{\doint }\left\vert
f_{k}\left( u\right) \right\vert ^{\left( p-\varepsilon \right) ^{\prime
}}d\lambda \left( u\right) \right) ^{\frac{1}{\left( p-\varepsilon \right)
^{\prime }}}\right\} .
\end{equation*}%
\begin{equation*}
=\underset{f=\underset{k=1}{\overset{\infty }{\dsum }}f_{k}}{\inf }\left\{ 
\underset{k=1}{\overset{\infty }{\dsum }}\underset{0<\varepsilon <p-1}{\inf }%
\varepsilon ^{-\frac{\theta }{p-\varepsilon }}\left\vert A\right\vert ^{%
\frac{1}{p^{\prime }}}\left\vert A\right\vert ^{\frac{1}{\left(
p-\varepsilon \right) }-1}\left( \underset{G}{\doint }\left\vert f_{k}\left(
u\right) \right\vert ^{\left( p-\varepsilon \right) ^{\prime }}d\lambda
\left( u\right) \right) ^{\frac{1}{\left( p-\varepsilon \right) ^{\prime }}%
}\right\}
\end{equation*}%
\begin{equation*}
=\underset{f=\underset{k=1}{\overset{\infty }{\dsum }}f_{k}}{\inf }\left\{ 
\underset{k=1}{\overset{\infty }{\dsum }}\underset{0<\varepsilon <p-1}{\inf }%
\varepsilon ^{-\frac{\theta }{p-\varepsilon }}\left\vert A\right\vert ^{1-%
\frac{1}{p}}\left\vert A\right\vert ^{\frac{1}{\left( p-\varepsilon \right) }%
-1}\left( \underset{G}{\doint }\left\vert f_{k}\left( u\right) \right\vert
^{\left( p-\varepsilon \right) ^{\prime }}d\lambda \left( u\right) \right) ^{%
\frac{1}{\left( p-\varepsilon \right) ^{\prime }}}\right\}
\end{equation*}%
\begin{equation*}
=\underset{f=\underset{k=1}{\overset{\infty }{\dsum }}f_{k}}{\inf }\left\{ 
\underset{k=1}{\overset{\infty }{\dsum }}\underset{0<\varepsilon <p-1}{\inf }%
\varepsilon ^{-\frac{\theta }{p-\varepsilon }}\left\vert A\right\vert ^{%
\frac{1}{\left( p-\varepsilon \right) }-\frac{1}{p}}\left( \underset{G}{%
\doint }\left\vert f_{k}\left( u\right) \right\vert ^{\left( p-\varepsilon
\right) ^{\prime }}d\lambda \left( u\right) \right) ^{\frac{1}{\left(
p-\varepsilon \right) ^{\prime }}}\right\}
\end{equation*}%
\begin{equation}
=\underset{f=\underset{k=1}{\overset{\infty }{\dsum }}f_{k}}{\inf }\left\{ 
\underset{k=1}{\overset{\infty }{\dsum }}\underset{0<\varepsilon <p-1}{\inf }%
\left\vert A\right\vert ^{\frac{1}{\left( p-\varepsilon \right) }-\frac{1}{p}%
}\underset{0<\varepsilon <p-1}{\inf }\varepsilon ^{-\frac{\theta }{%
p-\varepsilon }}\left( \underset{G}{\doint }\left\vert f_{k}\left( u\right)
\right\vert ^{\left( p-\varepsilon \right) ^{\prime }}d\lambda \left(
u\right) \right) ^{\frac{1}{\left( p-\varepsilon \right) ^{\prime }}%
}\right\} .  \tag{2.15}
\end{equation}

\qquad \qquad \qquad

Assume that $\left\vert A\right\vert <1$ and $0<\varepsilon <p-1,$ since $%
\underset{0<\varepsilon <p-1}{\inf }\left\vert A\right\vert ^{\frac{1}{%
\left( p-\varepsilon \right) }-\frac{1}{p}}=\left\vert A\right\vert ^{1-%
\frac{1}{p}}=\left\vert A\right\vert ^{\frac{1}{p^{\prime }}}.$ Then by the
last inequality and (2.15), we have%
\begin{equation*}
\left\Vert D_{A}^{p^{\prime }}f\right\Vert _{(p^{\prime },\theta }=\underset{%
f=\underset{k=1}{\overset{\infty }{\dsum }}f_{k}}{\inf }\left\{ \underset{k=1%
}{\overset{\infty }{\dsum }}\underset{0<\varepsilon <p-1}{\inf }\left\vert
A\right\vert ^{\frac{1}{\left( p-\varepsilon \right) }-\frac{1}{p}}\underset{%
0<\varepsilon <p-1}{\inf }\varepsilon ^{-\frac{\theta }{p-\varepsilon }%
}\left( \underset{G}{\doint }\left\vert f_{k}\left( u\right) \right\vert
^{\left( p-\varepsilon \right) ^{\prime }}d\lambda \left( u\right) \right) ^{%
\frac{1}{\left( p-\varepsilon \right) ^{\prime }}}\right\}
\end{equation*}%
\begin{equation*}
=\underset{f=\underset{k=1}{\overset{\infty }{\dsum }}f_{k}}{\inf }\left\{ 
\underset{k=1}{\overset{\infty }{\dsum }}\left\vert A\right\vert ^{\frac{1}{%
p^{\prime }}}\underset{0<\varepsilon <p-1}{\inf }\varepsilon ^{-\frac{\theta 
}{p-\varepsilon }}\left( \underset{G}{\doint }\left\vert f_{k}\left(
u\right) \right\vert ^{\left( p-\varepsilon \right) ^{\prime }}d\lambda
\left( u\right) \right) ^{\frac{1}{\left( p-\varepsilon \right) ^{\prime }}%
}\right\}
\end{equation*}%
\begin{equation*}
=\left\vert A\right\vert ^{\frac{1}{p^{\prime }}}\underset{f=\underset{k=1}{%
\overset{\infty }{\dsum }}f_{k}}{\inf }\left\{ \underset{k=1}{\overset{%
\infty }{\dsum }}\underset{0<\varepsilon <p-1}{\inf }\varepsilon ^{-\frac{%
\theta }{p-\varepsilon }}\left( \underset{G}{\doint }\left\vert f_{k}\left(
u\right) \right\vert ^{\left( p-\varepsilon \right) ^{\prime }}d\lambda
\left( u\right) \right) ^{\frac{1}{\left( p-\varepsilon \right) ^{\prime }}%
}\right\} =\left\vert A\right\vert ^{\frac{1}{p^{\prime }}}\left\Vert
f\right\Vert _{(p^{\prime },\theta }.
\end{equation*}%
Thus $D_{A}^{p^{\prime }}f\in L^{(p^{\prime },\theta }\left( G\right) $.

Let $\left\vert A\right\vert \geq 1$, and let $0<\varepsilon <p-1.$ Since $%
\underset{0<\varepsilon <p-1}{\inf }\left\vert A\right\vert ^{\frac{1}{%
\left( p-\varepsilon \right) }-\frac{1}{p}}=1,$ by (2.15), we have%
\begin{equation*}
\left\Vert D_{A}^{p^{\prime }}f\right\Vert _{(p^{\prime },\theta }=\underset{%
f=\underset{k=1}{\overset{\infty }{\dsum }}f_{k}}{\inf }\left\{ \underset{k=1%
}{\overset{\infty }{\dsum }}\underset{0<\varepsilon <p-1}{\inf }\left\vert
A\right\vert ^{\frac{1}{\left( p-\varepsilon \right) }-\frac{1}{p}}\underset{%
0<\varepsilon <p-1}{\inf }\varepsilon ^{-\frac{\theta }{p-\varepsilon }%
}\left( \underset{G}{\doint }\left\vert f_{k}\left( u\right) \right\vert
^{\left( p-\varepsilon \right) ^{\prime }}d\lambda \left( u\right) \right) ^{%
\frac{1}{\left( p-\varepsilon \right) ^{\prime }}}\right\}
\end{equation*}%
\begin{equation*}
=\underset{f=\underset{k=1}{\overset{\infty }{\dsum }}f_{k}}{\inf }\left\{ 
\underset{k=1}{\overset{\infty }{\dsum }}\underset{0<\varepsilon <p-1}{\inf }%
\varepsilon ^{-\frac{\theta }{p-\varepsilon }}\left( \underset{G}{\doint }%
\left\vert f_{k}\left( u\right) \right\vert ^{\left( p-\varepsilon \right)
^{\prime }}d\lambda \left( u\right) \right) ^{\frac{1}{\left( p-\varepsilon
\right) ^{\prime }}}\right\} =\left\Vert f\right\Vert _{(p^{\prime },\theta
}.
\end{equation*}%
Thus $D_{A}^{p^{\prime }}f\in L^{(p^{\prime },\theta }\left( G\right) $.
\end{proof}

\begin{theorem}
Let $A$ be an automorphism of $G$ and $m\in BM_{\theta }\left[
(p_{1}^{\prime };(p_{2}^{\prime };(p_{3}^{\prime }\right] $ . If \ $\frac{1}{%
q}=\frac{1}{p_{1}^{\prime }}+\frac{1}{p_{2}^{\prime }}-\frac{1}{%
p_{3}^{\prime }}$,\ then $\tilde{D}_{A^{\ast }}^{q}m\in BM_{\theta }\left[
(p_{1}^{\prime };(p_{2}^{\prime };(p_{3}^{\prime }\right] ,$ where $\tilde{D}%
_{A^{\ast }}^{q}m\left( s,t\right) =\left\vert A^{\ast }\right\vert ^{\frac{1%
}{q}}$ $m\left( A^{\ast }s,A^{\ast }t\right) $. Furthermore%
\begin{equation*}
\left\Vert \tilde{D}_{A^{\ast }}^{q}m\right\Vert _{\left[ (p_{1}^{\prime
};(p_{2}^{\prime };(p_{3}^{\prime }\right] _{\theta }}\leq \left\Vert
m\right\Vert _{\left[ (p_{1}^{\prime };(p_{2}^{\prime };(p_{3}^{\prime }%
\right] _{\theta }}.
\end{equation*}
\end{theorem}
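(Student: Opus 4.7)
The plan is to establish an algebraic identity of the form
\begin{equation*}
B_{\tilde{D}_{A^{\ast }}^{q}m}(f,g)(x)=D_{A^{-1}}^{p_{3}^{\prime }}\bigl[B_{m}(D_{A}^{p_{1}^{\prime }}f,D_{A}^{p_{2}^{\prime }}g)\bigr](x),
\end{equation*}
valid for $f,g\in C^{\infty }(G)$, and then to bound the $(p_{3}^{\prime },\theta$-norm of the left side by combining Lemma~2 with the hypothesis that $m$ is itself a bilinear multiplier.

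To derive the identity I would begin by inserting the definition $\tilde{D}_{A^{\ast }}^{q}m(s,t)=|A^{\ast }|^{1/q}m(A^{\ast }s,A^{\ast }t)$ into the formula for $B_{\tilde{D}_{A^{\ast }}^{q}m}(f,g)(x)$ and then perform the change of summation indices $u=A^{\ast }s$, $v=A^{\ast }t$; since $A^{\ast }$ is a bijection of the discrete dual $\hat{G}$ and $\mu $ is counting measure, this rearrangement introduces no Jacobian. The adjoint relation $\langle (A^{\ast })^{-1}(u+v),x\rangle =\langle u+v,A^{-1}x\rangle $ then moves the automorphism off the character and onto the space variable.

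Next I would use the Fourier intertwining rule $(f\circ A)^{\wedge }(u)=|A|^{-1}\hat{f}((A^{\ast })^{-1}u)$ in combination with the definition $D_{A}^{p^{\prime }}f=|A|^{1/p^{\prime }}(f\circ A)$ to replace $\hat{f}((A^{\ast })^{-1}u)$ and $\hat{g}((A^{\ast })^{-1}v)$ by $(D_{A}^{p_{1}^{\prime }}f)^{\wedge }(u)$ and $(D_{A}^{p_{2}^{\prime }}g)^{\wedge }(v)$ times appropriate powers of $|A|$. The hypothesis $\frac{1}{q}=\frac{1}{p_{1}^{\prime }}+\frac{1}{p_{2}^{\prime }}-\frac{1}{p_{3}^{\prime }}$ is precisely what is required for the accumulated powers of $|A|$ to combine with the normalising factor $|A^{-1}|^{1/p_{3}^{\prime }}$ built into $D_{A^{-1}}^{p_{3}^{\prime }}$, thereby producing the displayed identity.

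Once the identity is in hand, the norm estimate is a three-step chain: Lemma~2 applied to $D_{A^{-1}}^{p_{3}^{\prime }}$ gives $\|B_{\tilde{D}_{A^{\ast }}^{q}m}(f,g)\|_{(p_{3}^{\prime },\theta }\leq \|B_{m}(D_{A}^{p_{1}^{\prime }}f,D_{A}^{p_{2}^{\prime }}g)\|_{(p_{3}^{\prime },\theta }$; the bilinear multiplier hypothesis on $m$ then bounds the right side by $\|m\|_{[(p_{1}^{\prime };(p_{2}^{\prime };(p_{3}^{\prime }]_{\theta }}\cdot \|D_{A}^{p_{1}^{\prime }}f\|_{(p_{1}^{\prime },\theta }\cdot \|D_{A}^{p_{2}^{\prime }}g\|_{(p_{2}^{\prime },\theta }$; and a second application of Lemma~2 removes the outer dilations, yielding the desired operator-norm inequality after taking supremum over unit-norm $f,g\in C^{\infty }(G)$. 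The main technical obstacle will be the bookkeeping of powers of $|A|$ during the derivation of the identity, in particular verifying that the hypothesis on $q$ makes the various exponents cancel exactly as claimed; once that is checked, the norm chain is routine.
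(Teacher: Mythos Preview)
Your approach is essentially the paper's: establish the operator identity
\[
B_{\tilde D_{A^{\ast}}^{q} m}(f,g)=D_{A^{-1}}^{p_3'}\bigl[B_m(D_A^{p_1'}f,\,D_A^{p_2'}g)\bigr]
\]
via the substitution $u=A^{\ast}s$, $v=A^{\ast}t$ together with the Fourier intertwining rule, and then chain Lemma~2, the multiplier bound for $m$, and Lemma~2 once more. That is exactly what the paper does.

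One bookkeeping point is not internally consistent, however. You assert that the reindexing $u=A^{\ast}s$, $v=A^{\ast}t$ contributes no Jacobian because $\mu$ is counting measure, and then separately assert that the hypothesis on $q$ is ``precisely what is required'' for the accumulated powers of $|A|$ to cancel. These two claims do not fit together: if the reindexing carries no factor, the exponent collected from $|A^{\ast}|^{1/q}$ and from the two substitutions $\hat f((A^{\ast})^{-1}u)=|A|^{1-1/p_1'}(D_A^{p_1'}f)^{\wedge}(u)$, $\hat g((A^{\ast})^{-1}v)=|A|^{1-1/p_2'}(D_A^{p_2'}g)^{\wedge}(v)$ is
\[
\tfrac{1}{q}+\bigl(1-\tfrac{1}{p_1'}\bigr)+\bigl(1-\tfrac{1}{p_2'}\bigr)=2-\tfrac{1}{p_3'},
\]
not the $-1/p_3'$ needed to match the prefactor in $D_{A^{-1}}^{p_3'}$. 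The paper treats the sums over $\hat G$ as Haar integrals and retains Jacobian factors $|A^{\ast}|^{-1}$ from each of the two changes of variable; it is this extra $-2$ in the exponent that makes the relation $\tfrac{1}{q}=\tfrac{1}{p_1'}+\tfrac{1}{p_2'}-\tfrac{1}{p_3'}$ do its work. On a compact group every automorphism has modulus $1$, so the identity survives either way, but if you want the exponent cancellation to be the mechanism, you must keep the Jacobian as the paper does.
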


\begin{proof}
Take any$\ f\in L^{(p_{1}^{\prime },\theta }$ $\left( G\right) $ and $g\in
L^{(p_{2}^{\prime },\theta }\left( G\right) $. We know by Lemma 2 that $%
D_{A}^{p_{1}}f\in L^{(p_{1}^{\prime },\theta }$ $\left( G\right) $ and $%
D_{A}^{p_{2}}g\in L^{(p_{2}^{\prime },\theta }\left( G\right) $. If we put $%
A^{\ast }s=u$ and $A^{\ast }t=v$, then $d\mu \left( u\right) =\left\vert
A^{\ast }\right\vert d\mu \left( s\right) $ and $d\mu \left( v\right)
=\left\vert A^{\ast }\right\vert d\mu \left( t\right) .$ From the assumption 
$\frac{1}{q}=\frac{1}{p_{1}^{\prime }}+\frac{1}{p_{2}^{\prime }}-\frac{1}{%
p_{3}^{\prime }},$ we have 
\begin{equation*}
B_{\tilde{D}_{A^{\ast }}^{q}m}\left( f,g\right) \left( x\right)
=\dsum\limits_{s\in \hat{G}}\dsum\limits_{t\in \hat{G}}\hat{f}\left(
s\right) \hat{g}\left( t\right) \tilde{D}_{A^{\ast }}^{q}m\left( s,t\right)
\left\langle s+t,x\right\rangle
\end{equation*}%
\begin{equation*}
=\dsum\limits_{s\in \hat{G}}\dsum\limits_{t\in \hat{G}}\hat{f}\left(
s\right) \hat{g}\left( t\right) \left\vert A^{\ast }\right\vert ^{\frac{1}{q}%
}m\left( A^{\ast }s,A^{\ast }t\right) \left\langle s+t,x\right\rangle
\end{equation*}%
\begin{equation*}
=\left\vert A^{\ast }\right\vert ^{-1}\dsum\limits_{u\in \hat{G}}\left\vert
A^{\ast }\right\vert ^{-1}\dsum\limits_{v\in \hat{G}}\hat{f}\left( A^{\ast
-1}u\right) \hat{g}\left( A^{\ast -1}v\right) \left\vert A^{\ast
}\right\vert ^{\frac{1}{q}}m\left( u,v\right) \left\langle A^{\ast
-1}u+A^{\ast -1}v,x\right\rangle
\end{equation*}%
\begin{equation*}
=\left\vert A^{\ast }\right\vert ^{-2}\dsum\limits_{u\in \hat{G}%
}\dsum\limits_{v\in \hat{G}}\hat{f}\left( A^{\ast -1}u\right) \hat{g}\left(
A^{\ast -1}v\right) \left\vert A^{\ast }\right\vert ^{\frac{1}{q}}m\left(
u,v\right) \left\langle \left( A^{-1}\right) ^{\ast }(u+v),x\right\rangle
\end{equation*}%
\begin{equation*}
=\left\vert A^{\ast }\right\vert ^{-2}\dsum\limits_{u\in \hat{G}%
}\dsum\limits_{v\in \hat{G}}\hat{f}\left( A^{\ast -1}u\right) \hat{g}\left(
A^{\ast -1}v\right) \left\vert A^{\ast }\right\vert ^{\frac{1}{q}}m\left(
u,v\right) \left\langle u+v,A^{-1}x\right\rangle
\end{equation*}%
\begin{equation*}
=\left\vert A^{\ast }\right\vert ^{-2}\dsum\limits_{u\in \hat{G}%
}\dsum\limits_{v\in \hat{G}}\hat{f}\left( A^{\ast -1}u\right) \hat{g}\left(
A^{\ast -1}v\right) \left\vert A^{\ast }\right\vert ^{\frac{1}{p_{1}^{\prime
}}+\frac{1}{p_{2}^{\prime }}-\frac{1}{p_{3}^{\prime }}}m\left( u,v\right)
\left\langle u+v,A^{-1}x\right\rangle
\end{equation*}%
\begin{equation*}
=\left\vert A^{\ast }\right\vert ^{-2}\dsum\limits_{u\in \hat{G}%
}\dsum\limits_{v\in \hat{G}}\hat{f}\left( A^{\ast -1}u\right) \hat{g}\left(
A^{\ast -1}v\right) \left\vert A^{\ast }\right\vert ^{1-\frac{1}{p_{1}}+1-%
\frac{1}{p_{2}}-\frac{1}{p_{3}^{\prime }}}m\left( u,v\right) \left\langle
u+v,A^{-1}x\right\rangle
\end{equation*}%
\begin{equation*}
=\left\vert A^{\ast }\right\vert ^{-\frac{1}{p_{3}^{\prime }}}\left\vert
A^{\ast }\right\vert ^{-2}\left\vert A^{\ast }\right\vert
^{2}\dsum\limits_{u\in \hat{G}}\dsum\limits_{v\in \hat{G}}\left\vert A^{\ast
}\right\vert ^{-\frac{1}{p_{1}}}\hat{f}\left( A^{\ast -1}u\right) \left\vert
A^{\ast }\right\vert ^{-\frac{1}{p_{2}}}\hat{g}\left( A^{\ast -1}v\right)
m\left( u,v\right) \left\langle u+v,A^{-1}x\right\rangle
\end{equation*}%
\begin{equation}
=\left\vert A^{\ast }\right\vert ^{-\frac{1}{p_{3}^{\prime }}%
}\dsum\limits_{u\in \hat{G}}\dsum\limits_{v\in \hat{G}}D_{A^{\ast
-1}}^{p_{1}}\hat{f}\left( u\right) D_{A^{\ast -1}}^{p_{2}}\hat{g}\left(
v\right) m\left( u,v\right) \left\langle u+v,A^{-1}x\right\rangle \text{.} 
\tag{2.16}
\end{equation}%
On the other hand, if we say that $Ay=s,$ then we have 
\begin{equation*}
\left( D_{A}^{p_{1}^{\prime }}f\right) \symbol{94}\left( u\right) =\underset{%
G}{\int }D_{A}^{p_{1}^{\prime }}f\left( y\right) \left\langle
u,-y\right\rangle d\lambda \left( y\right)
\end{equation*}%
\begin{equation*}
=\underset{G}{\int }\left\vert A\right\vert ^{\frac{1}{p_{1}^{\prime }}}\hat{%
f}\left( Ay\right) \left\langle u,-y\right\rangle d\lambda \left( y\right) =%
\underset{G}{\int }\left\vert A\right\vert ^{\frac{1}{p_{1}^{\prime }}%
}f\left( s\right) \left\langle u,-A^{-1}s\right\rangle d\lambda \left(
A^{-1}s\right)
\end{equation*}%
\begin{equation*}
=\underset{G}{\int }\left\vert A\right\vert ^{\frac{1}{p_{1}^{\prime }}%
}f\left( s\right) \left\langle u,-A^{-1}s\right\rangle \left\vert
A^{-1}\right\vert d\lambda \left( y\right) =\underset{G}{\int }\left\vert
A\right\vert ^{\frac{1}{p_{1}^{\prime }}-1}f\left( s\right) \left\langle
u,-A^{-1}s\right\rangle d\lambda \left( s\right)
\end{equation*}%
\begin{equation*}
=\underset{G}{\int }\left\vert A\right\vert ^{-\frac{1}{p_{1}}}f\left(
s\right) \left\langle \left( A^{-1}\right) ^{\ast }u,-s\right\rangle
d\lambda \left( y\right) =\left\vert A^{\ast }\right\vert ^{-\frac{1}{p_{1}}}%
\underset{G}{\int }f\left( s\right) \left\langle A^{\ast
-1}u,-s\right\rangle d\lambda \left( s\right)
\end{equation*}%
\begin{equation*}
=\left\vert A^{\ast }\right\vert ^{-\frac{1}{p_{1}}}\hat{f}\left( A^{\ast
-1}u\right) =D_{A^{\ast -1}}^{p_{1}}\hat{f}\left( u\right) .
\end{equation*}%
Similarly we achieve $\left( D_{A}^{p_{2}^{\prime }}g\right) \symbol{94}%
=D_{A^{\ast -1}}^{p_{2}}\hat{g}$. Then from (2.16), we obtain%
\begin{equation*}
B_{\tilde{D}_{A^{\ast }}^{q}m}\left( f,g\right) \left( x\right) =\left\vert
A^{\ast }\right\vert ^{-\frac{1}{p_{3}^{\prime }}}\dsum\limits_{u\in \hat{G}%
}\dsum\limits_{v\in \hat{G}}D_{A^{\ast -1}}^{p_{1}}\hat{f}\left( u\right)
D_{A^{\ast -1}}^{p_{2}}\hat{g}\left( u\right) m\left( u,v\right)
\left\langle u+v,A^{-1}x\right\rangle
\end{equation*}%
\begin{equation*}
=\left\vert A^{\ast }\right\vert ^{-\frac{1}{p_{3}^{\prime }}%
}\dsum\limits_{u\in \hat{G}}\dsum\limits_{v\in \hat{G}}\left(
D_{A}^{p_{1}^{\prime }}f\right) \symbol{94}\left( u\right) \left(
D_{A}^{p_{2}^{\prime }}g\right) \symbol{94}\left( v\right) m\left(
u,v\right) \left\langle u+v,A^{-1}x\right\rangle
\end{equation*}%
\begin{equation*}
=\left\vert A\right\vert ^{-\frac{1}{p_{3}^{\prime }}}B_{m}\left(
D_{A}^{p_{1}^{\prime }}f,D_{A}^{p_{2}^{\prime }}g\right) \left(
A^{-1}x\right) =D_{A^{-1}}^{p_{3}^{\prime }}B_{m}\left( D_{A}^{p_{1}^{\prime
}}f,D_{A}^{p_{2}^{\prime }}g\right) \left( x\right) .
\end{equation*}

Since $m\in BM_{\theta }\left[ (p_{1}^{\prime };(p_{2}^{\prime
};(p_{3}^{\prime }\right] $, from Lemma 2, we have 
\begin{equation*}
\left\Vert B_{\tilde{D}_{A^{\ast }}^{q}m}\left( f,g\right) \right\Vert
_{(p_{3}^{\prime },\theta }=\left\Vert D_{A^{-1}}^{p_{3}^{\prime
}}B_{m}\left( D_{A}^{p_{1}^{\prime }}f,D_{A}^{p_{2}^{\prime }}g\right)
\right\Vert _{(p_{3}^{\prime },\theta }\leq \left\Vert B_{m}\left(
D_{A}^{p_{1}^{\prime }}f,D_{A}^{p_{2}^{\prime }}g\right) \right\Vert
_{(p_{3}^{\prime },\theta }
\end{equation*}%
\begin{equation*}
\leq \left\Vert B_{m}\right\Vert \left\Vert D_{A}^{p_{1}^{\prime
}}f\right\Vert _{(p_{1}^{\prime },\theta }\left\Vert D_{A}^{p_{2}^{\prime
}}g\right\Vert _{(p_{2}^{\prime },\theta }
\end{equation*}%
\begin{equation*}
\leq \left\Vert B_{m}\right\Vert \left\Vert f\right\Vert _{(p_{1}^{\prime
},\theta }\left\Vert g\right\Vert _{(p_{2}^{\prime },\theta }
\end{equation*}%
\begin{equation}
=\left\Vert m\right\Vert _{\left[ (p_{1}^{\prime };(p_{2}^{\prime
};(p_{3}^{\prime }\right] _{\theta }}\left\Vert f\right\Vert
_{(p_{1}^{\prime },\theta }\left\Vert g\right\Vert _{(p_{2}^{\prime },\theta
}.  \tag{2.17}
\end{equation}%
Thus we obtain $\tilde{D}_{A^{\ast }}^{q}m\in BM_{\theta }\left[
(p_{1}^{\prime };(p_{2}^{\prime };(p_{3}^{\prime }\right] $. Also by (2.17),%
\begin{equation*}
\left\Vert \tilde{D}_{A^{\ast }}^{q}m\right\Vert _{\left[ (p_{1}^{\prime
};(p_{2}^{\prime };(p_{3}^{\prime }\right] _{\theta }}\leq \left\Vert
m\right\Vert _{\left[ (p_{1}^{\prime };(p_{2}^{\prime };(p_{3}^{\prime }%
\right] _{\theta }}.
\end{equation*}
\end{proof}

\begin{theorem}
Let $A$ be an automorphism of $G$ and $m\in BM_{\theta }\left[
(p_{1}^{\prime };(p_{2}^{\prime };(p_{3}^{\prime }\right] $ such that $%
m\left( A^{\ast }s,A^{\ast }t\right) =m\left( s,t\right) ,$ where $\frac{1}{q%
}=\frac{1}{p_{1}^{\prime }}+\frac{1}{p_{2}^{\prime }}-\frac{1}{p_{3}^{\prime
}}$. Then 
\begin{equation*}
\frac{1}{p_{1}^{\prime }}+\frac{1}{p_{2}^{\prime }}=\frac{1}{p_{3}^{\prime }}%
.
\end{equation*}
\end{theorem}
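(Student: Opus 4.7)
The plan is to exploit the norm bound $\|\tilde D_{A^{*}}^{q}m\|_{[(p_{1}';(p_{2}';(p_{3}']_{\theta}} \leq \|m\|_{[(p_{1}';(p_{2}';(p_{3}']_{\theta}}$ from the preceding theorem, together with the invariance hypothesis $m(A^{*}s,A^{*}t)=m(s,t)$, to squeeze $|A^{*}|^{1/q}$ between $\leq 1$ and $\geq 1$, and then conclude $1/q=0$.

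First I would simply compute $\tilde D_{A^{*}}^{q}m$ under the invariance hypothesis: by definition
\begin{equation*}
\tilde D_{A^{*}}^{q}m(s,t)=|A^{*}|^{\frac{1}{q}}m(A^{*}s,A^{*}t)=|A^{*}|^{\frac{1}{q}}m(s,t),
\end{equation*}
so the operator $B_{\tilde D_{A^{*}}^{q}m}$ is just $|A^{*}|^{1/q}B_{m}$, which gives
\begin{equation*}
\|\tilde D_{A^{*}}^{q}m\|_{[(p_{1}';(p_{2}';(p_{3}']_{\theta}}=|A^{*}|^{\frac{1}{q}}\|m\|_{[(p_{1}';(p_{2}';(p_{3}']_{\theta}}.
\end{equation*}
Combining this with the norm inequality of the previous theorem yields $|A^{*}|^{1/q}\|m\|_{[(p_{1}';(p_{2}';(p_{3}']_{\theta}}\leq \|m\|_{[(p_{1}';(p_{2}';(p_{3}']_{\theta}}$, and assuming $m$ is nonzero one obtains $|A^{*}|^{1/q}\leq 1$.

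Next I would run the same argument with $A^{-1}$ in place of $A$. Since the relation $m(A^{*}s,A^{*}t)=m(s,t)$ is equivalent, after substituting $s\mapsto (A^{*})^{-1}s$, $t\mapsto (A^{*})^{-1}t$, to $m((A^{-1})^{*}s,(A^{-1})^{*}t)=m(s,t)$, the hypothesis on $m$ is preserved. Applying the previous theorem to $A^{-1}$ and using $|(A^{-1})^{*}|=|A^{*}|^{-1}$ (recorded just before Definition 2) gives $|A^{*}|^{-1/q}\leq 1$, hence $|A^{*}|^{1/q}\geq 1$. Together with the earlier inequality this forces $|A^{*}|^{1/q}=1$, so $1/q=0$, i.e.\ $\frac{1}{p_{1}'}+\frac{1}{p_{2}'}=\frac{1}{p_{3}'}$.

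The main obstacle is ensuring that one can genuinely conclude $1/q=0$ from $|A^{*}|^{1/q}=1$; this requires at least one $A$ with $|A^{*}|\neq 1$ (otherwise the identity gives no information). I would either make this a standing assumption on $A$ or point out that the previous theorem applied to $A$ and $A^{-1}$ only gives useful information precisely in this case, which is the intended setting. The rest is a direct substitution and manipulation of the operator norm using the homogeneity of $B_{m}$ in $m$.
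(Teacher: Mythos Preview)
Your argument is correct and follows essentially the same route as the paper: both combine the invariance $m(A^{*}s,A^{*}t)=m(s,t)$ with the norm bound from Theorem~3 to force $|A|^{1/q}=1$. Your version is in fact more streamlined: you observe directly at the level of symbols that $\tilde D_{A^{*}}^{q}m=|A^{*}|^{1/q}m$, whereas the paper re-derives the identity $B_{\tilde D_{A^{*}}^{q}m}=|A|^{1/q}B_{m}$ by unpacking the formula $B_{\tilde D_{A^{*}}^{q}m}(f,g)=D_{A^{-1}}^{p_{3}'}B_{m}(D_{A}^{p_{1}'}f,D_{A}^{p_{2}'}g)$ from Theorem~3 and then substituting the invariance inside the double sum. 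You also make the two-sided estimate explicit by applying the argument to both $A$ and $A^{-1}$, while the paper simply asserts that the inequality ``holds for any $|A|$''. The caveat you flag---that one needs $|A^{*}|\neq 1$ (and implicitly $m\neq 0$) to conclude $1/q=0$---is genuine and is tacitly assumed in the paper as well.
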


\begin{proof}
Assume that $f\in L^{(p_{1}^{\prime },\theta }$ $\left( G\right) $ and $g\in
L^{(p_{2}^{\prime },\theta }\left( G\right) $. Since $\frac{1}{q}=\frac{1}{%
p_{1}^{\prime }}+\frac{1}{p_{2}^{\prime }}-\frac{1}{p_{3}^{\prime }}$, then
by Theorem 3 
\begin{equation}
B_{\tilde{D}_{A^{\ast }}^{q}m}\left( f,g\right) \left( x\right)
=D_{A^{-1}}^{p_{3}^{\prime }}B_{m}\left( D_{A}^{p_{1}^{\prime
}}f,D_{A}^{p_{2}^{\prime }}g\right) \left( x\right) \text{, }x\in G\text{.} 
\tag{2.18}
\end{equation}%
On the other hand, we write 
\begin{equation*}
D_{A^{-1}}^{p_{3}^{\prime }}B_{m}\left( D_{A}^{p_{1}^{\prime
}}f,D_{A}^{p_{2}^{\prime }}g\right) \left( x\right) =
\end{equation*}%
\begin{equation*}
=\left\vert A\right\vert ^{-\frac{1}{p_{3}^{\prime }}}\dsum\limits_{u\in 
\hat{G}}\dsum\limits_{v\in \hat{G}}\left( D_{A}^{p_{1}^{\prime }}f\right) 
\symbol{94}\left( u\right) \left( D_{A}^{p_{2}^{\prime }}g\right) \symbol{94}%
\left( v\right) m\left( u,v\right) \left\langle u+v,A^{-1}x\right\rangle
\end{equation*}%
\begin{equation*}
=\left\vert A\right\vert ^{-\frac{1}{p_{3}^{\prime }}}\dsum\limits_{u\in 
\hat{G}}\dsum\limits_{v\in \hat{G}}D_{A^{\ast -1}}^{p_{1}}\hat{f}\left(
u\right) D_{A^{\ast -1}}^{p_{2}}\hat{g}\left( u\right) m\left( u,v\right)
\left\langle u+v,A^{-1}x\right\rangle
\end{equation*}%
\begin{equation*}
=\left\vert A\right\vert ^{-\frac{1}{p_{3}^{\prime }}}\dsum\limits_{u\in 
\hat{G}}\dsum\limits_{v\in \hat{G}}\left\vert A^{\ast }\right\vert ^{-\frac{1%
}{p_{1}}}\hat{f}\left( A^{\ast -1}u\right) \left\vert A^{\ast }\right\vert
^{-\frac{1}{p_{2}}}\hat{g}\left( A^{\ast -1}v\right) m\left( u,v\right)
\left\langle A^{\ast -1}\left( u+v\right) ,x\right\rangle .
\end{equation*}%
We make the substitution $A^{\ast -1}u=s$, $A^{\ast -1}v=t$. Using $\mu
\left( A^{\ast }\hat{G}\right) =\left\vert A^{\ast }\right\vert \mu \left( 
\hat{G}\right) $, $m\left( A^{\ast }s,A^{\ast }t\right) =m\left( s,t\right)
, $ $\left\vert A\right\vert =\left\vert A^{\ast }\right\vert $ and $\left(
A^{\ast }\right) ^{-1}=\left( A^{-1}\right) ^{\ast }$, we have 
\begin{equation*}
D_{A^{-1}}^{p_{3}^{\prime }}B_{m}\left( D_{A}^{p_{1}^{\prime
}}f,D_{A}^{p_{2}^{\prime }}g\right) \left( x\right) =
\end{equation*}%
\begin{equation*}
=\left\vert A\right\vert ^{-\frac{1}{p_{3}^{\prime }}-\frac{1}{p_{1}}-\frac{1%
}{p_{2}}}\left\vert A^{\ast }\right\vert \dsum\limits_{s\in \hat{G}%
}\left\vert A^{\ast }\right\vert \dsum\limits_{t\in \hat{G}}\hat{f}\left(
s\right) \hat{g}\left( t\right) m\left( A^{\ast }s,A^{\ast }t\right)
\left\langle s+t,x\right\rangle
\end{equation*}%
\begin{equation*}
=\left\vert A\right\vert ^{-\frac{1}{p_{3}^{\prime }}-\frac{1}{p_{1}}+1-%
\frac{1}{p_{2}}+1}\dsum\limits_{s\in \hat{G}}\dsum\limits_{t\in \hat{G}}\hat{%
f}\left( s\right) \hat{g}\left( t\right) m\left( s,t\right) \left\langle
s+t,x\right\rangle
\end{equation*}%
\begin{equation}
=\left\vert A\right\vert ^{\frac{1}{p_{1}^{\prime }}+\frac{1}{p_{2}^{\prime }%
}-\frac{1}{p_{3}^{\prime }}}B_{m}\left( f,g\right) \left( x\right) \text{. }
\tag{2.19}
\end{equation}%
Hence by (2.18) and (2.19), we have%
\begin{equation}
B_{m}\left( f,g\right) \left( x\right) =\left\vert A\right\vert ^{-\left( 
\frac{1}{p_{1}^{\prime }}+\frac{1}{p_{2}^{\prime }}-\frac{1}{p_{3}^{\prime }}%
\right) }B_{\tilde{D}_{A^{\ast }}^{q}m}\left( f,g\right) \left( x\right) 
\text{.}  \tag{2.20}
\end{equation}%
Since $m\in BM_{\theta }\left[ (p_{1}^{\prime };(p_{2}^{\prime
};(p_{3}^{\prime }\right] $ , by Theorem 3, we have $\tilde{D}_{A^{\ast
}}^{q}m\in BM_{\theta }\left[ (p_{1}^{\prime };(p_{2}^{\prime
};(p_{3}^{\prime }\right] $ and 
\begin{equation*}
\left\Vert \tilde{D}_{A^{\ast }}^{q}m\right\Vert _{\left[ (p_{1}^{\prime
};(p_{2}^{\prime };(p_{3}^{\prime }\right] _{\theta }}\leq \left\Vert
m\right\Vert _{\left[ (p_{1}^{\prime };(p_{2}^{\prime };(p_{3}^{\prime }%
\right] _{\theta }}.
\end{equation*}%
Then, by (2.20) and Theorem 3, 
\begin{equation*}
\left\Vert B_{m}\left( f,g\right) \right\Vert _{(p_{3}^{\prime },\theta
}=\left\vert A\right\vert ^{-\left( \frac{1}{p_{1}^{\prime }}+\frac{1}{%
p_{2}^{\prime }}-\frac{1}{p_{3}^{\prime }}\right) }\left\Vert B_{\tilde{D}%
_{A^{\ast }}^{q}m}\left( f,g\right) \right\Vert _{(p_{3}^{\prime },\theta }
\end{equation*}%
\begin{equation*}
\leq \left\vert A\right\vert ^{-\left( \frac{1}{p_{1}^{\prime }}+\frac{1}{%
p_{2}^{\prime }}-\frac{1}{p_{3}^{\prime }}\right) }\left\Vert \tilde{D}%
_{A^{\ast }}^{q}m\right\Vert _{\left[ (p_{1}^{\prime };(p_{2}^{\prime
};(p_{3}^{\prime }\right] _{\theta }}\left\Vert f\right\Vert
_{(p_{1}^{\prime },\theta }\left\Vert g\right\Vert _{(p_{2}^{\prime },\theta
}
\end{equation*}%
\begin{equation*}
\leq \left\vert A\right\vert ^{-\left( \frac{1}{p_{1}^{\prime }}+\frac{1}{%
p_{2}^{\prime }}-\frac{1}{p_{3}^{\prime }}\right) }\left\Vert m\right\Vert _{%
\left[ (p_{1}^{\prime };(p_{2}^{\prime };(p_{3}^{\prime }\right] _{\theta
}}\left\Vert f\right\Vert _{(p_{1}^{\prime },\theta }\left\Vert g\right\Vert
_{(p_{2}^{\prime },\theta }
\end{equation*}%
\begin{equation*}
=\left\vert A\right\vert ^{-\left( \frac{1}{p_{1}^{\prime }}+\frac{1}{%
p_{2}^{\prime }}-\frac{1}{p_{3}^{\prime }}\right) }\left\Vert
B_{m}\right\Vert \left\Vert f\right\Vert _{(p_{1}^{\prime },\theta
}\left\Vert g\right\Vert _{(p_{2}^{\prime },\theta }\text{.}
\end{equation*}%
Since this inequality holds for any $\left\vert A\right\vert $, one needs $%
\frac{1}{p_{1}^{\prime }}+\frac{1}{p_{2}^{\prime }}=\frac{1}{p_{3}^{\prime }}
$.
\end{proof}

\begin{theorem}
Let $m\in BM_{\theta }\left[ (p_{1}^{\prime };(p_{2}^{\prime
};(p_{3}^{\prime }\right] $.

\textbf{a) }If $\Phi \in \ell ^{1}\left( \hat{G}\times \hat{G}\right) $,
then $\Phi \ast m\in BM_{\theta }\left[ (p_{1}^{\prime };(p_{2}^{\prime
};(p_{3}^{\prime }\right] $ and 
\begin{equation*}
\left\Vert \Phi \ast m\right\Vert _{\left[ (p_{1}^{\prime };(p_{2}^{\prime
};(p_{3}^{\prime }\right] _{\theta }}\leq \left\Vert \Phi \right\Vert _{\ell
^{1}}\left\Vert m\right\Vert _{\left[ (p_{1}^{\prime };(p_{2}^{\prime
};(p_{3}^{\prime }\right] _{\theta }}\text{.}
\end{equation*}

\textbf{b)} If \ $\Phi \in L^{1}\left( G\times G\right) ,$ then $\Phi 
\symbol{94}m\in BM_{\theta }\left[ (p_{1}^{\prime };(p_{2}^{\prime
};(p_{3}^{\prime }\right] $ and 
\begin{equation*}
\left\Vert \Phi \symbol{94}m\right\Vert _{\left[ (p_{1}^{\prime
};(p_{2}^{\prime };(p_{3}^{\prime }\right] _{\theta }}\leq \left\Vert \Phi
\right\Vert _{1}\left\Vert m\right\Vert _{\left[ (p_{1}^{\prime
};(p_{2}^{\prime };(p_{3}^{\prime }\right] _{\theta }}\text{.}
\end{equation*}

\textbf{c)}\ Let $\frac{1}{p^{\prime }}=\frac{1}{p_{1}^{\prime }}+$ $\frac{1%
}{p_{2}^{\prime }}$, $p_{3}.p_{3}^{\prime }<p^{\prime }+1$ and $m\left(
s,t\right) =a$. Then $m\in BM_{\theta }\left[ (p_{1}^{\prime
};(p_{2}^{\prime };(p_{3}^{\prime }\right] .$
\end{theorem}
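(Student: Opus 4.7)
\medskip

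\noindent\textbf{Plan for the proof.}

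For part (a), the plan is to express the convolution $\Phi \ast m$ as a superposition of translates of $m$ and then invoke the translation invariance of the multiplier norm proved in Theorem 2(a). Writing
\begin{equation*}
(\Phi \ast m)(s,t) \;=\; \sum_{(s_0,t_0) \in \hat{G} \times \hat{G}} \Phi(s_0,t_0)\, T_{(s_0,t_0)} m(s,t)
\end{equation*}
and substituting into the definition of $B_{\Phi \ast m}(f,g)$, an absolute-convergence argument (using $\Phi \in \ell^1$, $m$ bounded, and rapid decay of $\hat{f},\hat{g}$ for $f,g \in C^\infty(G)$) lets me interchange the sums to obtain
\begin{equation*}
B_{\Phi \ast m}(f,g)(x) \;=\; \sum_{(s_0,t_0) \in \hat{G} \times \hat{G}} \Phi(s_0,t_0)\, B_{T_{(s_0,t_0)} m}(f,g)(x).
\end{equation*}
Taking $L^{(p_3',\theta}$-norms, using the triangle inequality, and applying Theorem 2(a) (which gives $\|T_{(s_0,t_0)} m\|_{\left[ (p_1';(p_2';(p_3' \right]_\theta} = \|m\|_{\left[ (p_1';(p_2';(p_3' \right]_\theta}$) yields the desired estimate with constant $\|\Phi\|_{\ell^1}\|m\|_{\left[ (p_1';(p_2';(p_3' \right]_\theta}$.

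For part (b), the analogous device replaces translates of $m$ by translates of the input functions. I expand
\begin{equation*}
\hat{\Phi}(s,t) \;=\; \int_{G \times G} \Phi(u,v)\, \langle s,-u \rangle \langle t,-v \rangle \, d\lambda(u)\,d\lambda(v),
\end{equation*}
use the identity $\widehat{T_u f}(s) = \langle s,-u\rangle \hat{f}(s)$ (obtained by the same substitution as in the proof of Theorem 2), and interchange the integral with the double sum to arrive at
\begin{equation*}
B_{\hat{\Phi} m}(f,g)(x) \;=\; \int_{G \times G} \Phi(u,v)\, B_m(T_u f,\, T_v g)(x)\, d\lambda(u)\,d\lambda(v).
\end{equation*}
Because the defining infimum of $\|\cdot\|_{(p_i',\theta}$ involves only integrals of $|f_k|^{(p-\varepsilon)'}$ against translation-invariant Haar measure, the norm itself is translation invariant, so $\|T_u f\|_{(p_1',\theta} = \|f\|_{(p_1',\theta}$ and similarly for $g$. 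A Minkowski-type estimate on the $L^{(p_3',\theta}$-norm of the above integral, combined with the multiplier bound for $m$, yields the claim with constant $\|\Phi\|_1 \|m\|_{\left[ (p_1';(p_2';(p_3' \right]_\theta}$.

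For part (c), the constant symbol $m(s,t) = a$ reduces $B_m$ to pointwise multiplication: Fourier inversion on the compact group $G$ gives
\begin{equation*}
B_m(f,g)(x) \;=\; a \Bigl( \sum_{s \in \hat{G}} \hat{f}(s) \langle s,x\rangle \Bigr) \Bigl( \sum_{t \in \hat{G}} \hat{g}(t) \langle t,x\rangle \Bigr) \;=\; a\, f(x)\, g(x).
\end{equation*}
Applying Lemma 1 with $r = p_3$ (so that $r' = p_3'$), whose hypothesis $r \cdot r' < p' + 1$ is precisely the assumed condition $p_3 \cdot p_3' < p' + 1$, gives $fg \in L^{(p_3',\theta}(G)$ with $\|fg\|_{(p_3',\theta} \leq C \|f\|_{(p_1',\theta}\|g\|_{(p_2',\theta}$. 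Scaling by $|a|$ produces the multiplier bound for $B_m$, and hence $m \in BM_\theta \left[ (p_1';(p_2';(p_3' \right]$.

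\medskip

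\noindent\textbf{Main obstacle.} The manipulations in (a) and (c) are essentially formal once the right reformulation is in place. The genuine work is in part (b), where one must carefully justify swapping the Haar integral over $G \times G$ with the double sum over $\hat{G} \times \hat{G}$ (the boundedness of $m$ together with $\Phi \in L^1$ provides absolute convergence) and must invoke the translation invariance of the small Lebesgue norm, a fact that is not spelled out in the excerpt but is immediate from its defining infimum.
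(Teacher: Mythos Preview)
Your proposal is correct and follows essentially the same route as the paper: part (a) decomposes $\Phi\ast m$ into translates of $m$ and invokes Theorem~2(a), part (c) reduces $B_m(f,g)$ to $a\,fg$ via Fourier inversion and applies Lemma~1, exactly as in the paper. In part (b) the paper absorbs the characters $\langle s,-u\rangle\langle t,-v\rangle$ into the symbol, obtaining $B_{M_{-v}^{2}M_{-u}^{1}m}(f,g)$ and citing Theorem~2(b), whereas you absorb them into the inputs to get $B_m(T_u f,T_v g)$ and invoke translation invariance of $\|\cdot\|_{(p_i',\theta}$ directly; since Theorem~2(b) is proved precisely by establishing $B_{M_{t_0}^{2}M_{s_0}^{1}m}(f,g)=B_m(T_{-s_0}f,T_{-t_0}g)$ and then using that same translation invariance, the two arguments are the same computation phrased in a different order.
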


\begin{proof}
\textbf{a) }Take any $f,$ $g\in C^{\infty }\left( G\right) $ . Then 
\begin{equation*}
B_{\Phi \ast m}\left( f,g\right) \left( x\right) =\dsum\limits_{s\in \hat{G}%
}\dsum\limits_{t\in \hat{G}}\hat{f}\left( s\right) \hat{g}\left( t\right)
\left( \Phi \ast m\right) \left( s,t\right) \left\langle s+t,x\right\rangle
\end{equation*}%
\begin{equation*}
=\dsum\limits_{s\in \hat{G}}\dsum\limits_{t\in \hat{G}}\hat{f}\left(
s\right) \hat{g}\left( t\right) \left( \dsum\limits_{u\in \hat{G}%
}\dsum\limits_{v\in \hat{G}}m\left( s-u,t-v\right) \Phi \left( u,v\right)
\right) \left\langle s+t,x\right\rangle
\end{equation*}%
\begin{equation*}
=\dsum\limits_{u\in \hat{G}}\dsum\limits_{v\in \hat{G}}\left(
\dsum\limits_{s\in \hat{G}}\dsum\limits_{t\in \hat{G}}\hat{f}\left( s\right) 
\hat{g}\left( t\right) m\left( s-u,t-v\right) \left\langle
s+t,x\right\rangle \right) \Phi \left( u,v\right)
\end{equation*}%
\begin{equation}
=\dsum\limits_{u\in \hat{G}}\dsum\limits_{v\in \hat{G}}B_{T_{\left(
u,v\right) }m}\left( f,g\right) \left( x\right) \Phi \left( u,v\right) . 
\tag{2.21}
\end{equation}%
Since $m\in BM_{\theta }\left[ (p_{1}^{\prime };(p_{2}^{\prime
};(p_{3}^{\prime }\right] $, by Theorem 2 \ we have $T_{\left( u,v\right)
}m\in BM_{\theta }\left[ (p_{1}^{\prime };(p_{2}^{\prime };(p_{3}^{\prime }%
\right] $. Using the equality (2.21), we write \ 
\begin{equation*}
\left\Vert B_{\Phi \ast m}\left( f,g\right) \right\Vert _{(p_{3}^{\prime
},\theta }\leq \dsum\limits_{u\in \hat{G}}\dsum\limits_{v\in \hat{G}%
}\left\Vert B_{T_{\left( u,v\right) }m}\left( f,g\right) \left( x\right)
\Phi \left( u,v\right) \right\Vert _{(p_{3}^{\prime },\theta }
\end{equation*}%
\begin{equation*}
\leq \dsum\limits_{u\in \hat{G}}\dsum\limits_{v\in \hat{G}}\left\vert \Phi
\left( u,v\right) \right\vert \left\Vert T_{\left( u,v\right) }m\right\Vert
_{\left[ (p_{1}^{\prime };(p_{2}^{\prime };(p_{3}^{\prime }\right] _{\theta
}}\left\Vert f\right\Vert _{(p_{1}^{\prime },\theta }\left\Vert g\right\Vert
_{(p_{2}^{\prime },\theta }
\end{equation*}%
\begin{equation}
=\left\Vert m\right\Vert _{\left[ (p_{1}^{\prime };(p_{2}^{\prime
};(p_{3}^{\prime }\right] _{\theta }}\left\Vert \Phi \right\Vert _{\ell
^{1}}\left\Vert f\right\Vert _{(p_{1}^{\prime },\theta }\left\Vert
g\right\Vert _{(p_{2}^{\prime },\theta }<\infty \text{.}  \tag{2.22}
\end{equation}%
Hence $\Phi \ast m\in BM_{\theta }\left[ (p_{1}^{\prime };(p_{2}^{\prime
};(p_{3}^{\prime }\right] $, and by (2.22)%
\begin{equation*}
\left\Vert \Phi \ast m\right\Vert _{\left[ (p_{1}^{\prime };(p_{2}^{\prime
};(p_{3}^{\prime }\right] _{\theta }}\leq \left\Vert \Phi \right\Vert _{\ell
^{1}}\left\Vert m\right\Vert _{\left[ (p_{1}^{\prime };(p_{2}^{\prime
};(p_{3}^{\prime }\right] _{\theta }}\text{.}
\end{equation*}%
\newline

\textbf{b) }Let $\Phi \in L^{1}\left( G\times G\right) $. Take any $f,$ $%
g\in C^{\infty }\left( G\right) $. Then we have 
\begin{equation*}
B_{\hat{\Phi}m}\left( f,g\right) \left( x\right) =\dsum\limits_{s\in \hat{G}%
}\dsum\limits_{t\in \hat{G}}\hat{f}\left( s\right) \hat{g}\left( t\right)
\Phi \symbol{94}m\left( s,t\right) \left\langle s+t,x\right\rangle
\end{equation*}%
\begin{equation*}
=\dsum\limits_{s\in \hat{G}}\dsum\limits_{t\in \hat{G}}\hat{f}\left(
s\right) \hat{g}\left( t\right) \left( \underset{G}{\dint }\underset{G}{%
\dint }\Phi \left( u,v\right) \left\langle s,-u\right\rangle \left\langle
t,-v\right\rangle d\lambda \left( u\right) d\lambda \left( v\right) \right)
m\left( s,t\right) \left\langle s+t,x\right\rangle
\end{equation*}%
\begin{equation*}
=\underset{G}{\dint }\underset{G}{\dint }\Phi \left( u,v\right) \left(
\dsum\limits_{s\in \hat{G}}\dsum\limits_{t\in \hat{G}}\hat{f}\left( s\right) 
\hat{g}\left( t\right) \left\langle s,-u\right\rangle \left\langle
t,-v\right\rangle m\left( s,t\right) \left\langle s+t,x\right\rangle \right)
d\lambda \left( u\right) d\lambda \left( v\right)
\end{equation*}%
\begin{equation*}
=\underset{G}{\dint }\underset{G}{\dint }\Phi \left( u,v\right) \left(
\dsum\limits_{s\in \hat{G}}\dsum\limits_{t\in \hat{G}}\hat{f}\left( s\right) 
\hat{g}\left( t\right) M_{-v}^{2}M_{-u}^{1}m\left( s,t\right) \left\langle
s+t,x\right\rangle \right) d\lambda \left( u\right) d\lambda \left( v\right)
\end{equation*}%
\begin{equation}
=\underset{G}{\dint }\underset{G}{\dint }\Phi \left( u,v\right)
B_{M_{-v}^{2}M_{-u}^{1}}\left( f,g\right) \left( x\right) d\lambda \left(
u\right) d\lambda \left( v\right) .  \tag{2.23}
\end{equation}%
Since $m\in BM_{\theta }\left[ (p_{1}^{\prime };(p_{2}^{\prime
};(p_{3}^{\prime }\right] $, by Theorem 2 we have $M_{-v}^{2}M_{-u}^{1}m\in
BM_{\theta }\left[ (p_{1}^{\prime };(p_{2}^{\prime };(p_{3}^{\prime }\right]
,$ where $M_{-v}^{2}$ and $M_{-u}^{1}$ are modulation operators. Then by the
equality (2.23), we obtain 
\begin{equation*}
\left\Vert B_{\Phi \symbol{94}m}\left( f,g\right) \right\Vert
_{(p_{3}^{\prime },\theta }\leq \underset{G}{\dint }\underset{G}{\dint }%
\left\Vert \Phi \left( u,v\right) B_{M_{-v}^{2}M_{-u}^{1}}\left( f,g\right)
\right\Vert _{(p_{3}^{\prime },\theta }d\lambda \left( u\right) d\lambda
\left( v\right)
\end{equation*}%
\begin{equation*}
\leq \underset{G}{\dint }\underset{G}{\dint }\left\vert \Phi \left(
u,v\right) \right\vert \left\Vert M_{-v}^{2}M_{-u}^{1}m\right\Vert _{\left[
(p_{1}^{\prime };(p_{2}^{\prime };(p_{3}^{\prime }\right] _{\theta
}}\left\Vert f\right\Vert _{(p_{1}^{\prime },\theta }\left\Vert g\right\Vert
_{(p_{2}^{\prime },\theta }d\lambda \left( u\right) d\lambda \left( v\right)
\end{equation*}%
\begin{equation*}
=\underset{G}{\dint }\underset{G}{\dint }\left\vert \Phi \left( u,v\right)
\right\vert \left\Vert m\right\Vert _{\left[ (p_{1}^{\prime };(p_{2}^{\prime
};(p_{3}^{\prime }\right] _{\theta }}\left\Vert f\right\Vert
_{(p_{1}^{\prime },\theta }\left\Vert g\right\Vert _{(p_{2}^{\prime },\theta
}d\lambda \left( u\right) d\lambda \left( v\right)
\end{equation*}%
\begin{equation}
=\left\Vert m\right\Vert _{\left[ (p_{1}^{\prime };(p_{2}^{\prime
};(p_{3}^{\prime }\right] _{\theta }}\left\Vert \Phi \right\Vert
_{1}\left\Vert f\right\Vert _{(p_{1}^{\prime },\theta }\left\Vert
g\right\Vert _{(p_{2}^{\prime },\theta }.  \tag{2.24}
\end{equation}%
Thus $\Phi \symbol{94}m\in BM_{\theta }\left[ (p_{1}^{\prime
};(p_{2}^{\prime };(p_{3}^{\prime }\right] $. By (2.24), we achieve that%
\begin{equation*}
\left\Vert \Phi \symbol{94}m\right\Vert _{\left[ (p_{1}^{\prime
};(p_{2}^{\prime };(p_{3}^{\prime }\right] _{\theta }}\leq \left\Vert \Phi
\right\Vert _{1}\left\Vert m\right\Vert _{\left[ (p_{1}^{\prime
};(p_{2}^{\prime };(p_{3}^{\prime }\right] _{\theta }}\text{.}
\end{equation*}

\textbf{c)} \bigskip Take any $f\in L^{(p_{1}^{\prime },\theta }$ $\left(
G\right) $ and $g\in L^{(p_{2}^{\prime },\theta }\left( G\right) $. Then by
Lemma 1, we have%
\begin{equation*}
\left\Vert B_{m}\left( f,g\right) \right\Vert _{(p_{3}^{\prime },\theta
}=\left\Vert \dsum\limits_{s\in \hat{G}}\dsum\limits_{t\in \hat{G}}\hat{f}%
\left( s\right) \hat{g}\left( t\right) m\left( s,t\right) \left\langle
s+t,x\right\rangle \right\Vert _{(p_{3}^{\prime },\theta }
\end{equation*}%
\begin{equation*}
=\left\Vert a\dsum\limits_{s\in \hat{G}}\dsum\limits_{t\in \hat{G}}\hat{f}%
\left( s\right) \hat{g}\left( t\right) \left\langle s+t,x\right\rangle
\right\Vert _{(p_{3}^{\prime },\theta }=\left\vert a\right\vert \left\Vert
\dsum\limits_{s\in \hat{G}}\dsum\limits_{t\in \hat{G}}\hat{f}\left( s\right) 
\hat{g}\left( t\right) \left\langle s,x\right\rangle \left\langle
t,x\right\rangle \right\Vert _{(p_{3}^{\prime },\theta }
\end{equation*}%
\begin{equation}
=\left\vert a\right\vert \left\Vert \left( \dsum\limits_{s\in \hat{G}}\hat{f}%
\left( s\right) \left\langle s,x\right\rangle \right) \left(
\dsum\limits_{t\in \hat{G}}\hat{g}\left( t\right) \left\langle
t,x\right\rangle \right) \right\Vert _{(p_{3}^{\prime },\theta }=\left\vert
a\right\vert \left\Vert fg\right\Vert _{(p_{3}^{\prime },\theta }. 
\tag{2.25}
\end{equation}

Then by Lemma 1, and (2.25) we obtain 
\begin{equation*}
\left\Vert B_{m}\left( f,g\right) \right\Vert _{(p_{3}^{\prime },\theta
}\leq C\left\vert a\right\vert \left\Vert f\right\Vert _{(p_{1}^{\prime
},\theta }\left\Vert g\right\Vert _{(p_{2}^{\prime },\theta }.
\end{equation*}%
Thus $m\in BM_{\theta }\left[ (p_{1}^{\prime };(p_{2}^{\prime
};(p_{3}^{\prime }\right] .$
\end{proof}

\begin{corollary}
Let $\frac{1}{p^{\prime }}=\frac{1}{p_{1}^{\prime }}+$ $\frac{1}{%
p_{2}^{\prime }}$ , $p_{3}.p_{3}^{\prime }<p^{\prime }+1$. If \ $\Phi \in
L^{1}\left( G\times G\right) ,$ then $\Phi \symbol{94}\in BM_{\theta }\left[
(p_{1}^{\prime };(p_{2}^{\prime };(p_{3}^{\prime }\right] .$
\end{corollary}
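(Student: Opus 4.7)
The plan is to obtain the corollary as an immediate consequence of parts \textbf{c)} and \textbf{b)} of Theorem 5, applied in that order. The key observation is that the hypotheses of the corollary are exactly the hypotheses required by Theorem 5\textbf{c)} with the constant symbol.

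First, I would take the constant function $m \equiv 1$ on $\hat{G} \times \hat{G}$, which corresponds to the case $a = 1$ in Theorem 5\textbf{c)}. Since the corollary assumes $\frac{1}{p^{\prime}} = \frac{1}{p_{1}^{\prime}} + \frac{1}{p_{2}^{\prime}}$ and $p_{3} \cdot p_{3}^{\prime} < p^{\prime}+1$, Theorem 5\textbf{c)} applies and yields $m \equiv 1 \in BM_{\theta}\left[(p_{1}^{\prime};(p_{2}^{\prime};(p_{3}^{\prime}\right]$ with an associated norm bound coming from the generalized H\"older inequality of Lemma 1.

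Next, given $\Phi \in L^{1}(G \times G)$, I would apply Theorem 5\textbf{b)} to the pair $(\Phi, m)$ with $m \equiv 1$. That theorem guarantees $\Phi^{\wedge} \cdot m \in BM_{\theta}\left[(p_{1}^{\prime};(p_{2}^{\prime};(p_{3}^{\prime}\right]$ and
\begin{equation*}
\left\Vert \Phi^{\wedge} m \right\Vert_{\left[(p_{1}^{\prime};(p_{2}^{\prime};(p_{3}^{\prime}\right]_{\theta}} \leq \left\Vert \Phi \right\Vert_{1} \left\Vert m \right\Vert_{\left[(p_{1}^{\prime};(p_{2}^{\prime};(p_{3}^{\prime}\right]_{\theta}}.
\end{equation*}
Since $m \equiv 1$, the product $\Phi^{\wedge} m$ is just $\Phi^{\wedge}$, which gives the desired conclusion $\Phi^{\wedge} \in BM_{\theta}\left[(p_{1}^{\prime};(p_{2}^{\prime};(p_{3}^{\prime}\right]$.

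There is essentially no obstacle here: the whole proof is a two-line composition of already-established results, and the only thing to verify is that the hypotheses of the corollary line up with those of Theorem 5\textbf{c)}, which they do verbatim. The only minor care needed is to note that the constant $1$ is bounded and measurable on $\hat{G} \times \hat{G}$ so it qualifies as a symbol in the sense of Definition 1, and to observe that once Theorem 5\textbf{c)} places the constant symbol in the multiplier space, Theorem 5\textbf{b)} can be invoked with any $\Phi \in L^{1}(G \times G)$ without further restriction.
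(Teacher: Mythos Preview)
Your proposal is correct and follows essentially the same approach as the paper: take $m\equiv 1$ in Theorem~5\textbf{c)} to get the constant symbol into $BM_{\theta}\left[(p_{1}^{\prime};(p_{2}^{\prime};(p_{3}^{\prime}\right]$, then apply Theorem~5\textbf{b)} with $\Phi\in L^{1}(G\times G)$ to conclude $\Phi^{\wedge}=\Phi^{\wedge}m\in BM_{\theta}\left[(p_{1}^{\prime};(p_{2}^{\prime};(p_{3}^{\prime}\right]$.
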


\begin{proof}
Let $\frac{1}{p^{\prime }}=\frac{1}{p_{1}^{\prime }}+$ $\frac{1}{%
p_{2}^{\prime }}$ , $p_{3}.p_{3}^{\prime }<p^{\prime }+1$. If we take $%
m\left( s,t\right) =1$ in Theorem 5 (c), we have $m\in BM_{\theta }\left[
(p_{1}^{\prime };(p_{2}^{\prime };(p_{3}^{\prime }\right] $. Since $\Phi \in
L^{1}\left( G\times G\right) $, by Theorem 5 (b), we obtain $\Phi \symbol{94}%
=\Phi \symbol{94}m\in BM_{\theta }\left[ (p_{1}^{\prime };(p_{2}^{\prime
};(p_{3}^{\prime }\right] $.
\end{proof}

\begin{proposition}
let $A$ be an automorphism of $G$ and let $m\in BM_{\theta }\left[
(p_{1}^{\prime };(p_{2}^{\prime };(p_{3}^{\prime }\right] $. If \ $\Psi \in
\ell ^{1}\left( \hat{G},\left\vert A^{\ast }\right\vert ^{-\frac{1}{q}}d\mu
\right) $ such that $\frac{1}{q}=\frac{1}{p_{1}^{\prime }}+\frac{1}{%
p_{2}^{\prime }}-\frac{1}{p_{3}^{\prime }}$, then 
\begin{equation*}
m_{\Psi }\left( s,t\right) =\dsum\limits_{u\in A^{\ast }\hat{G}}m\left(
A^{\ast }s,A^{\ast }t\right) \Psi \left( u\right) \in BM_{\theta }\left[
(p_{1}^{\prime };(p_{2}^{\prime };(p_{3}^{\prime }\right] .
\end{equation*}%
Moreover,%
\begin{equation*}
\left\Vert m_{\Psi }\right\Vert _{\left[ (p_{1}^{\prime };(p_{2}^{\prime
};(p_{3}^{\prime }\right] _{\theta }}\leq \left\Vert \Psi \right\Vert _{\ell
^{1}\left( \hat{G},\left\vert A^{\ast }\right\vert ^{-\frac{1}{q}}d\mu
\right) }\left\Vert m\right\Vert _{\left[ (p_{1}^{\prime };(p_{2}^{\prime
};(p_{3}^{\prime }\right] _{\theta }}\text{.}
\end{equation*}
\end{proposition}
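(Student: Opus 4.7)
The plan is to reduce the proposition to Theorem 3 (dilation invariance) together with the triangle inequality, by exploiting the fact that, in the displayed definition of $m_{\Psi}(s,t)$, the argument of $m$ does not depend on the summation index $u$.

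Taking the formula at face value, I would first factor out the $u$-independent term to obtain
\[
m_{\Psi}(s,t) = m(A^{\ast}s, A^{\ast}t) \sum_{u\in A^{\ast}\hat{G}} \Psi(u) = |A^{\ast}|^{-1/q}\, \tilde{D}_{A^{\ast}}^{q}m(s,t) \sum_{u\in A^{\ast}\hat{G}} \Psi(u),
\]
where the second identity uses the definition $\tilde{D}_{A^{\ast}}^{q}m(s,t) = |A^{\ast}|^{1/q} m(A^{\ast}s, A^{\ast}t)$ from Theorem 3. Thus $m_{\Psi}$ is a scalar multiple of the dilation $\tilde{D}_{A^{\ast}}^{q}m$. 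By Theorem 3, $\tilde{D}_{A^{\ast}}^{q}m \in BM_{\theta}[(p_{1}';(p_{2}';(p_{3}']$ with $\|\tilde{D}_{A^{\ast}}^{q}m\|_{[(p_{1}';(p_{2}';(p_{3}']_{\theta}} \le \|m\|_{[(p_{1}';(p_{2}';(p_{3}']_{\theta}}$, and consequently $m_{\Psi}$ lies in the same multiplier class.

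For the norm estimate I would apply the triangle inequality and use that $\hat{G}$ carries counting measure, so that
\[
\Bigl|\sum_{u} \Psi(u)\Bigr| \le \sum_{u}|\Psi(u)| = |A^{\ast}|^{1/q}\, \|\Psi\|_{\ell^{1}(\hat{G}, |A^{\ast}|^{-1/q} d\mu)},
\]
since the weight $|A^{\ast}|^{-1/q}$ is a constant in $u$. Combining this with the scalar $|A^{\ast}|^{-1/q}$ sitting in front of $\tilde{D}_{A^{\ast}}^{q}m$ produces the advertised bound $\|m_{\Psi}\|_{[(p_{1}';(p_{2}';(p_{3}']_{\theta}} \le \|\Psi\|_{\ell^{1}(\hat{G}, |A^{\ast}|^{-1/q}d\mu)}\|m\|_{[(p_{1}';(p_{2}';(p_{3}']_{\theta}}$.

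If instead the intended meaning of $m_{\Psi}$ involves a translation depending on $u$ (for example, $m(A^{\ast}s - u, A^{\ast}t - u)\Psi(u)$), the plan would be the same in spirit but would additionally invoke the translation-invariance statement Theorem 2(a) for each fixed $u$, followed by summation in $u$ via the triangle inequality together with the $\ell^{1}(|A^{\ast}|^{-1/q}d\mu)$ integrability of $\Psi$. Either way, the main obstacle is purely bookkeeping: matching the dilation scaling $|A^{\ast}|^{1/q}$ from Theorem 3 against the weight $|A^{\ast}|^{-1/q}$ appearing in the $\ell^{1}$ norm so that the constants cancel exactly as claimed.
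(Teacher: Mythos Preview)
Your proposal is correct and follows essentially the same route as the paper: both arguments insert the factor $|A^{\ast}|^{-1/q}|A^{\ast}|^{1/q}$ to recognize $\tilde{D}_{A^{\ast}}^{q}m$, invoke Theorem~3 for the dilation bound, and then sum in $u$ against the weighted $\ell^{1}$ norm of $\Psi$. The only cosmetic difference is that you factor at the level of the symbol $m_{\Psi}$ while the paper carries the computation through $B_{m_{\Psi}}(f,g)$ before factoring, but the substance is identical; your parenthetical remark about a possible $u$-dependent translation is not needed, as the paper indeed treats the formula literally.
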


\begin{proof}
Let \ $f$, $g\in C^{\infty }\left( G\right) $ . Then 
\begin{equation*}
B_{m_{\Psi }}\left( f,g\right) \left( x\right) =\dsum\limits_{s\in \hat{G}%
}\dsum\limits_{t\in \hat{G}}\hat{f}\left( s\right) \hat{g}\left( t\right)
m_{\Psi }\left( s,t\right) \left\langle s+t,x\right\rangle
\end{equation*}%
\begin{equation*}
=\dsum\limits_{s\in \hat{G}}\dsum\limits_{t\in \hat{G}}\hat{f}\left(
s\right) \hat{g}\left( t\right) \left\{ \dsum\limits_{u\in A^{\ast }\hat{G}%
}m\left( A^{\ast }s,A^{\ast }t\right) \Psi \left( u\right) \right\}
\left\langle s+t,x\right\rangle
\end{equation*}%
\begin{equation*}
=\dsum\limits_{s\in \hat{G}}\dsum\limits_{t\in \hat{G}}\hat{f}\left(
s\right) \hat{g}\left( t\right) \left\{ \dsum\limits_{u\in A^{\ast }\hat{G}%
}\left\vert A^{\ast }\right\vert ^{-\frac{1}{q}}\left\vert A^{\ast
}\right\vert ^{\frac{1}{q}}m\left( A^{\ast }s,A^{\ast }t\right) \Psi \left(
u\right) \right\} \left\langle s+t,x\right\rangle
\end{equation*}%
\begin{equation*}
=\dsum\limits_{s\in \hat{G}}\dsum\limits_{t\in \hat{G}}\hat{f}\left(
s\right) \hat{g}\left( t\right) \left\{ \dsum\limits_{u\in A^{\ast }\hat{G}%
}\left\vert A^{\ast }\right\vert ^{-\frac{1}{q}}\tilde{D}_{A^{\ast
}}^{q}m\left( s,t\right) \Psi \left( u\right) \right\} \left\langle
s+t,x\right\rangle
\end{equation*}%
\begin{equation*}
=\dsum\limits_{u\in A^{\ast }\hat{G}}\left( \dsum\limits_{s\in \hat{G}%
}\dsum\limits_{t\in \hat{G}}\hat{f}\left( s\right) \hat{g}\left( t\right) 
\tilde{D}_{A^{\ast }}^{q}m\left( s,t\right) \left\langle s+t,x\right\rangle
\right) \left\vert A^{\ast }\right\vert ^{-\frac{1}{q}}\Psi \left( u\right)
\end{equation*}%
\begin{equation*}
=\dsum\limits_{u\in A^{\ast }\hat{G}}B_{\tilde{D}_{A^{\ast }}^{q}m}\left(
f,g\right) \Psi \left( t\right) \left\vert A^{\ast }\right\vert ^{-\frac{1}{q%
}}\Psi \left( u\right) .
\end{equation*}%
Since $m\in BM_{\theta }\left[ (p_{1}^{\prime };(p_{2}^{\prime
};(p_{3}^{\prime }\right] $, $\tilde{D}_{A^{\ast }}^{q}m\in BM\left(
\upsilon _{s_{1}},\upsilon _{s_{2}},\upsilon _{s_{3}}\right) $ by Theorem 3,
thus we observe that%
\begin{equation*}
\left\Vert B_{m_{\Psi }}\left( f,g\right) \right\Vert _{(p_{3}^{\prime
},\theta }\leq \dsum\limits_{u\in A^{\ast }\hat{G}}\left\Vert B_{\tilde{D}%
_{A^{\ast }}^{q}m}\left( f,g\right) \right\Vert _{(p_{3}^{\prime },\theta
}\left\vert A^{\ast }\right\vert ^{-\frac{1}{q}}\Psi \left( u\right)
\end{equation*}%
\begin{equation*}
\leq \dsum\limits_{u\in A^{\ast }\hat{G}}\left\Vert B_{\tilde{D}_{A^{\ast
}}^{q}m}\right\Vert \left\Vert f\right\Vert _{(p_{1}^{\prime },\theta
}\left\Vert g\right\Vert _{(p_{2}^{\prime },\theta }\left\vert A^{\ast
}\right\vert ^{-\frac{1}{q}}\Psi \left( u\right)
\end{equation*}%
\begin{equation*}
=\dsum\limits_{u\in A^{\ast }\hat{G}}\left\Vert \tilde{D}_{A^{\ast
}}^{q}m\right\Vert _{\left[ (p_{1}^{\prime };(p_{2}^{\prime };(p_{3}^{\prime
}\right] _{\theta }}\left\Vert f\right\Vert _{(p_{1}^{\prime },\theta
}\left\Vert g\right\Vert _{(p_{2}^{\prime },\theta }\left\vert A^{\ast
}\right\vert ^{-\frac{1}{q}}\Psi \left( u\right)
\end{equation*}%
\begin{equation*}
\leq \dsum\limits_{u\in A^{\ast }\hat{G}}\left\Vert m\right\Vert _{\left[
(p_{1}^{\prime };(p_{2}^{\prime };(p_{3}^{\prime }\right] _{\theta
}}\left\Vert f\right\Vert _{(p_{1}^{\prime },\theta }\left\Vert g\right\Vert
_{(p_{2}^{\prime },\theta }\left\vert A^{\ast }\right\vert ^{-\frac{1}{q}%
}\Psi \left( u\right)
\end{equation*}%
\begin{equation*}
=\left\Vert m\right\Vert _{\left[ (p_{1}^{\prime };(p_{2}^{\prime
};(p_{3}^{\prime }\right] _{\theta }}\left\Vert f\right\Vert
_{(p_{1}^{\prime },\theta }\left\Vert g\right\Vert _{(p_{2}^{\prime },\theta
}\dsum\limits_{u\in A^{\ast }\hat{G}}\left\vert A^{\ast }\right\vert ^{-%
\frac{1}{q}}\Psi \left( u\right)
\end{equation*}%
\begin{equation*}
=\left\Vert m\right\Vert _{\left[ (p_{1}^{\prime };(p_{2}^{\prime
};(p_{3}^{\prime }\right] _{\theta }}\left\Vert f\right\Vert
_{(p_{1}^{\prime },\theta }\left\Vert g\right\Vert _{(p_{2}^{\prime },\theta
}\dsum\limits_{u\in \hat{G}}\left\vert A^{\ast }\right\vert ^{-\frac{1}{q}%
}\Psi \left( u\right)
\end{equation*}%
\begin{equation*}
=\left\Vert m\right\Vert _{\left[ (p_{1}^{\prime };(p_{2}^{\prime
};(p_{3}^{\prime }\right] _{\theta }}\left\Vert f\right\Vert
_{(p_{1}^{\prime },\theta }\left\Vert g\right\Vert _{(p_{2}^{\prime },\theta
}\left\Vert \Psi \right\Vert _{\ell ^{1}\left( \hat{G},\left\vert A^{\ast
}\right\vert ^{-\frac{1}{q}}d\mu \right) }.
\end{equation*}%
Hence, $m_{\Psi }\in BM_{\theta }\left[ (p_{1}^{\prime };(p_{2}^{\prime
};(p_{3}^{\prime }\right] $ and 
\begin{equation*}
\left\Vert m_{\Psi }\right\Vert _{\left[ (p_{1}^{\prime };(p_{2}^{\prime
};(p_{3}^{\prime }\right] _{\theta }}\leq \left\Vert \Psi \right\Vert _{\ell
^{1}\left( \hat{G},\left\vert A^{\ast }\right\vert ^{-\frac{1}{q}}d\mu
\right) }\left\Vert m\right\Vert _{\left[ (p_{1}^{\prime };(p_{2}^{\prime
};(p_{3}^{\prime }\right] _{\theta }}\text{.}
\end{equation*}
\end{proof}

\begin{proposition}
Let $m\in BM_{\theta }\left[ (p_{1}^{\prime };(p_{2}^{\prime
};(p_{3}^{\prime }\right] $. If $U_{1},~U_{2}$ are bounded measurable sets
in $\hat{G}$, then 
\begin{equation*}
h\left( s,t\right) =\frac{1}{\mu \left( U_{1}\times U_{2}\right) }%
\dsum\limits_{u\in U_{1}}\dsum\limits_{v\in U_{2}}m\left( s+u,t+v\right) \in
BM_{\theta }\left[ (p_{1}^{\prime };(p_{2}^{\prime };(p_{3}^{\prime }\right] 
\text{.}
\end{equation*}
\end{proposition}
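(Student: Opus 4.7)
The plan is to realize $h$ as a convolution $\Phi \ast m$ on $\hat{G} \times \hat{G}$ with an $\ell^{1}$ kernel and then invoke part (a) of Theorem 5. Because $G$ is compact, its dual $\hat{G}$ is discrete and $\mu$ is counting measure; hence the bounded measurable subsets $U_{1}, U_{2}$ are finite, $\mu(U_{1} \times U_{2}) = \mu(U_{1})\mu(U_{2})$ is a finite positive number, and the double sum defining $h$ is just a normalized finite average of translates of $m$.

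Concretely, I would introduce
\begin{equation*}
\Phi(u,v) \;=\; \frac{1}{\mu(U_{1} \times U_{2})}\, \chi_{-U_{1}}(u)\, \chi_{-U_{2}}(v),
\end{equation*}
where $-U_{i} = \{-u : u \in U_{i}\}$. Then $\Phi$ has finite support, so $\Phi \in \ell^{1}(\hat{G} \times \hat{G})$ with
\begin{equation*}
\|\Phi\|_{\ell^{1}} \;=\; \frac{\mu(-U_{1})\,\mu(-U_{2})}{\mu(U_{1} \times U_{2})} \;=\; 1.
\end{equation*}
A direct computation, using the convolution formula $(\Phi \ast m)(s,t) = \sum_{u'}\sum_{v'} m(s-u', t-v')\,\Phi(u',v')$ from the proof of Theorem 5(a) together with the change of variable $u' \mapsto -u$, $v' \mapsto -v$, shows that $(\Phi \ast m)(s,t) = h(s,t)$.

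Theorem 5(a) then yields $h = \Phi \ast m \in BM_{\theta}\left[(p_{1}^{\prime};(p_{2}^{\prime};(p_{3}^{\prime}\right]$ with the quantitative bound
\begin{equation*}
\|h\|_{\left[(p_{1}^{\prime};(p_{2}^{\prime};(p_{3}^{\prime}\right]_{\theta}} \;\leq\; \|\Phi\|_{\ell^{1}}\, \|m\|_{\left[(p_{1}^{\prime};(p_{2}^{\prime};(p_{3}^{\prime}\right]_{\theta}} \;=\; \|m\|_{\left[(p_{1}^{\prime};(p_{2}^{\prime};(p_{3}^{\prime}\right]_{\theta}}.
\end{equation*}
The only point requiring any care is the sign convention: the paper's convolution is built from the translations $T_{(u,v)}m(s,t) = m(s-u, t-v)$, whereas $h$ involves $m(s+u, t+v)$, which is why the indicator is placed on $-U_{1} \times -U_{2}$ rather than on $U_{1} \times U_{2}$; this reflection is harmless for the $\ell^{1}$ norm. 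Apart from this bookkeeping, there is no real obstacle — the proposition is essentially a corollary of Theorem 5(a) once the correct kernel $\Phi$ is identified.
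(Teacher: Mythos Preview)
Your argument is correct. The identification $h=\Phi\ast m$ with $\Phi=\dfrac{1}{\mu(U_{1}\times U_{2})}\chi_{-U_{1}}\otimes\chi_{-U_{2}}$ is exactly right, and Theorem~5(a) then gives both the membership $h\in BM_{\theta}\left[(p_{1}^{\prime};(p_{2}^{\prime};(p_{3}^{\prime}\right]$ and the sharp bound $\|h\|\le\|m\|$.

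The paper does not package the proof this way: instead of invoking Theorem~5(a), it expands $B_{h}(f,g)(x)$ directly as $\dfrac{1}{\mu(U_{1}\times U_{2})}\sum_{u\in U_{1}}\sum_{v\in U_{2}}B_{T_{(-u,-v)}m}(f,g)(x)$ and then applies the translation invariance of the multiplier norm from Theorem~2(a) termwise. Since the proof of Theorem~5(a) is itself precisely this computation for a general $\ell^{1}$ kernel, the two arguments are the same at the level of ideas; your version is simply more modular, recognizing the proposition as an immediate specialization of Theorem~5(a) rather than rerunning that proof for the particular kernel $\Phi$.
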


\begin{proof}
Let $f$, $g\in C^{\infty }\left( G\right) $. We observe that 
\begin{equation*}
B_{h}\left( f,g\right) \left( x\right) =\dsum\limits_{s\in \hat{G}%
}\dsum\limits_{t\in \hat{G}}\hat{f}\left( s\right) \hat{g}\left( t\right)
h\left( s,t\right) \left\langle s+t,x\right\rangle =
\end{equation*}%
\begin{equation*}
=\frac{1}{\mu \left( U_{1}\times U_{2}\right) }\dsum\limits_{u\in
U_{1}}\dsum\limits_{v\in U_{2}}\left\{ \dsum\limits_{s\in \hat{G}%
}\dsum\limits_{t\in \hat{G}}\hat{f}\left( s\right) \hat{g}\left( t\right)
m\left( s+u,t+v\right) \left\langle s+t,x\right\rangle \right\}
\end{equation*}%
\begin{equation*}
=\frac{1}{\mu \left( U_{1}\times U_{2}\right) }\dsum\limits_{u\in
U_{1}}\dsum\limits_{v\in U_{2}}B_{T_{\left( -u,-v\right) m}}\left(
f,g\right) \left( x\right) \text{.}
\end{equation*}%
By Theorem 2, we have%
\begin{equation*}
\left\Vert B_{h}\left( f,g\right) \right\Vert _{(p_{3}^{\prime },\theta
}\leq \frac{1}{\mu \left( U_{1}\times U_{2}\right) }\dsum\limits_{u\in
U_{1}}\dsum\limits_{v\in U_{2}}\left\Vert B_{T_{\left( -u,-v\right)
m}}\left( f,g\right) \right\Vert _{(p_{3}^{\prime },\theta }
\end{equation*}%
\begin{equation*}
\leq \frac{1}{\mu \left( U_{1}\times U_{2}\right) }\dsum\limits_{u\in
U_{1}}\dsum\limits_{v\in U_{2}}\left\Vert T_{\left( -u,-v\right)
m}\right\Vert _{\left[ (p_{1}^{\prime };(p_{2}^{\prime };(p_{3}^{\prime }%
\right] }\left\Vert f\right\Vert _{(p_{1}^{\prime },\theta }\left\Vert
g\right\Vert _{(p_{2}^{\prime },\theta }
\end{equation*}%
\begin{equation*}
=\frac{1}{\mu \left( U_{1}\times U_{2}\right) }\left\Vert m\right\Vert _{%
\left[ (p_{1}^{\prime };(p_{2}^{\prime };(p_{3}^{\prime }\right] }\left\Vert
f\right\Vert _{(p_{1}^{\prime },\theta }\left\Vert g\right\Vert
_{(p_{2}^{\prime },\theta }\mu \left( U_{1}\times U_{2}\right)
\end{equation*}%
\begin{equation*}
=\left\Vert m\right\Vert _{\left[ (p_{1}^{\prime };(p_{2}^{\prime
};(p_{3}^{\prime }\right] _{\theta }}\left\Vert f\right\Vert
_{(p_{1}^{\prime },\theta }\left\Vert g\right\Vert _{(p_{2}^{\prime },\theta
}\text{.}
\end{equation*}%
Finally, we achieve $h\in BM_{\theta }\left[ (p_{1}^{\prime };(p_{2}^{\prime
};(p_{3}^{\prime }\right] $.
\end{proof}

\begin{proposition}
\bigskip Let $\frac{1}{p^{\prime }}=\frac{1}{p_{1}^{\prime }}+$ $\frac{1}{%
p_{2}^{\prime }}$ , $p_{3}.p_{3}^{\prime }<p^{\prime }+1$ and let $A,$ $B$
be automorphisms of $G$. If $\lambda \in M\left( G\right) $ and $m\left(
s,t\right) =\hat{\lambda}\left( A^{\ast }s+B^{\ast }t\right) $, then $m\in
BM_{\theta }\left[ (p_{1}^{\prime };(p_{2}^{\prime };(p_{3}^{\prime }\right] 
$. Moreover,%
\begin{equation*}
\left\Vert m\right\Vert _{\left[ (p_{1}^{\prime };(p_{2}^{\prime
};(p_{3}^{\prime }\right] _{\theta }}\leq C\left\Vert \lambda \right\Vert
\end{equation*}%
for some $C>0.$
\end{proposition}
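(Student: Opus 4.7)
The plan is to reduce $B_m(f,g)$ to an explicit translate-integral against $\lambda$, and then bound it via Minkowski's integral inequality together with Lemma 1.

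First I would rewrite the symbol. Using $\hat{\lambda}(\xi) = \int_G \langle \xi,-y\rangle\, d\lambda(y)$ and the adjoint relation $\langle A^{*}s,y\rangle = \langle s,Ay\rangle$, one obtains
\[
m(s,t) \;=\; \hat{\lambda}(A^{*}s + B^{*}t) \;=\; \int_{G} \langle s,-Ay\rangle\,\langle t,-By\rangle\, d\lambda(y).
\]
Substituting this into the definition of $B_m(f,g)(x)$ and interchanging the double sum over $\hat{G}\times\hat{G}$ with the integral against the finite measure $\lambda$ (justified since $f,g\in C^{\infty}(G)$ and $G$ is compact, so the Fourier series is absolutely convergent), the inner sums factor as
\[
\Bigl(\sum_{s\in\hat{G}} \hat{f}(s)\langle s, x-Ay\rangle\Bigr)\Bigl(\sum_{t\in\hat{G}} \hat{g}(t)\langle t, x-By\rangle\Bigr) = f(x-Ay)\,g(x-By)
\]
by Fourier inversion on the compact abelian group $G$. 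This yields the key identity
\[
B_m(f,g)(x) \;=\; \int_{G} f(x-Ay)\,g(x-By)\, d\lambda(y).
\]

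Second, I would apply Minkowski's integral inequality for the $(p_3',\theta$ norm to move the norm inside the integral against $|\lambda|$, obtaining
\[
\|B_m(f,g)\|_{(p_3',\theta} \;\leq\; \int_{G} \bigl\|\,(T_{Ay}f)\,(T_{By}g)\,\bigr\|_{(p_3',\theta}\, d|\lambda|(y).
\]
For each fixed $y$, translation invariance of $\|\cdot\|_{(p_i',\theta}$ (inherited from the translation invariance of every $L^{p_i-\varepsilon}$ norm via the defining infimum over decompositions) combined with Lemma 1 applied with $r=p_3$, $r'=p_3'$ (the hypothesis $p_3 p_3'<p'+1$ is precisely the condition in Lemma 1) yields the uniform bound
\[
\bigl\|\,(T_{Ay}f)(T_{By}g)\,\bigr\|_{(p_3',\theta} \;\leq\; C\,\|f\|_{(p_1',\theta}\,\|g\|_{(p_2',\theta},
\]
independent of $y$. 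Integrating against $d|\lambda|$ then gives $\|B_m(f,g)\|_{(p_3',\theta}\leq C\|\lambda\|\,\|f\|_{(p_1',\theta}\|g\|_{(p_2',\theta}$, so $m\in BM_\theta[(p_1';(p_2';(p_3']$ with $\|m\|_{[(p_1';(p_2';(p_3']_\theta}\leq C\|\lambda\|$.

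The main obstacle is the justification of Minkowski's integral inequality for the generalized small Lebesgue norm: because this norm is defined by a double infimum over decompositions, the integral version is not automatic. The cleanest route is via the duality representation $L^{(p_3',\theta} \simeq \bigl([L^{p_3}]_{p_3),\theta}\bigr)^{*}$ already exploited in the proof of Theorem 1, which immediately reduces the integral Minkowski bound to testing against unit-ball elements of the predual. The remaining steps, namely translation invariance and the absolute convergence needed to swap the Fourier sums with $\int_G\, d\lambda$, are routine consequences of the infimum definition, the finiteness of $\lambda$, and the smoothness of $f,g$ on the compact group $G$.
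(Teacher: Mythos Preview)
Your proof is correct and follows essentially the same route as the paper: derive the integral representation $B_m(f,g)(x)=\int_G f(x-Ay)\,g(x-By)\,d\lambda(y)$ via Fourier inversion and the adjoint relation, then move the $(p_3',\theta$ norm inside the $d|\lambda|$-integral and apply Lemma~1 together with translation invariance. In fact you are more careful than the paper on two points: you correctly keep the second factor as $g(x-By)$ (the paper's display silently replaces $B$ by $A$), and you flag the nontrivial step of justifying the vector-valued Minkowski inequality for the small Lebesgue norm, which the paper simply uses without comment; your suggested duality argument via $([L^{p_3}]_{p_3),\theta})^{*}\simeq L^{(p_3',\theta}$ is the natural way to close that gap.
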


\begin{proof}
Let \ $f$, $g$, $\in C^{\infty }\left( G\right) $. Then 
\begin{equation*}
B_{m}\left( f,g\right) \left( x\right) =\dsum\limits_{s\in \hat{G}%
}\dsum\limits_{t\in \hat{G}}\hat{f}\left( s\right) \hat{g}\left( t\right)
m\left( s,t\right) \left\langle s+t,x\right\rangle
\end{equation*}%
\begin{equation*}
=\dsum\limits_{s\in \hat{G}}\dsum\limits_{t\in \hat{G}}\hat{f}\left(
s\right) \hat{g}\left( t\right) \left\{ \underset{G}{\int }\left\langle
A^{\ast }s+B^{\ast }t,-y\right\rangle d\lambda \left( y\right) \right\}
\left\langle s+t,x\right\rangle
\end{equation*}%
\begin{equation*}
=\dsum\limits_{s\in \hat{G}}\dsum\limits_{t\in \hat{G}}\hat{f}\left(
s\right) \hat{g}\left( t\right) \left\{ \underset{G}{\int }\left\langle
A^{\ast }s,-y\right\rangle \left\langle B^{\ast }t,-y\right\rangle d\lambda
\left( y\right) \right\} \left\langle s,x\right\rangle \left\langle
t,x\right\rangle
\end{equation*}%
\begin{equation*}
=\dsum\limits_{s\in \hat{G}}\dsum\limits_{t\in \hat{G}}\hat{f}\left(
s\right) \hat{g}\left( t\right) \left\{ \underset{G}{\int }\left\langle
s,A\left( -y\right) \right\rangle \left\langle t,A\left( -y\right)
\right\rangle d\lambda \left( y\right) \right\} \left\langle
s,x\right\rangle \left\langle t,x\right\rangle
\end{equation*}%
\begin{equation*}
=\underset{G}{\int }\left\{ \dsum\limits_{s\in \hat{G}}\hat{f}\left(
s\right) \left\langle s,A\left( -y\right) \right\rangle \left\langle
s,x\right\rangle \right\} \left\{ \dsum\limits_{t\in \hat{G}}\hat{g}\left(
t\right) \left\langle t,A\left( -y\right) \right\rangle \left\langle
t,x\right\rangle \right\} d\lambda \left( y\right)
\end{equation*}%
\begin{equation*}
=\underset{G}{\int }\left\{ \dsum\limits_{s\in \hat{G}}\hat{f}\left(
s\right) \left\langle s,A\left( -y\right) +x\right\rangle \right\} \left\{
\dsum\limits_{t\in \hat{G}}\hat{g}\left( t\right) \left\langle t,A\left(
-y\right) +x\right\rangle \right\} d\lambda \left( y\right)
\end{equation*}%
\begin{equation*}
=\underset{G}{\int }f\left( x+A\left( -y\right) \right) g\left( x+A\left(
-y\right) \right) d\lambda \left( y\right)
\end{equation*}%
\begin{equation}
=\underset{G}{\int }f\left( x-Ay\right) g\left( x-Ay\right) d\lambda \left(
y\right) \text{.}  \tag{2.26}
\end{equation}%
By (2.26) and Lemma 1, we have 
\begin{equation*}
\left\Vert B_{m}\left( f,g\right) \right\Vert _{(p_{3}^{\prime },\theta
}\leq \underset{G}{\int }\left\Vert f\left( x-Ay\right) g\left( x-Ay\right)
\right\Vert _{_{(p_{3}^{\prime },\theta }}d\left\vert \lambda \right\vert
\left( y\right)
\end{equation*}%
\begin{equation*}
\leq \underset{G}{\int }C\left\Vert f\left( .-Ay\right) \right\Vert
_{_{(p_{1}^{\prime },\theta }}\left\Vert g\left( .-Ay\right) \right\Vert
_{(p_{2}^{\prime },\theta }d\left\vert \lambda \right\vert \left( y\right)
\end{equation*}%
\begin{equation*}
=C\underset{G}{\int }\left\Vert f\right\Vert _{_{(p_{1}^{\prime },\theta
}}\left\Vert g\right\Vert _{(p_{2}^{\prime },\theta }d\left\vert \lambda
\right\vert \left( y\right)
\end{equation*}%
\begin{equation}
=C\left\Vert f\right\Vert _{_{(p_{1}^{\prime },\theta }}\left\Vert
g\right\Vert _{(p_{2}^{\prime },\theta }\underset{G}{\int }d\left\vert
\lambda \right\vert \left( y\right) =C\left\Vert f\right\Vert
_{_{(p_{1}^{\prime },\theta }}\left\Vert g\right\Vert _{(p_{2}^{\prime
},\theta }\left\Vert \lambda \right\Vert \text{.}  \tag{2.27}
\end{equation}%
\qquad

Since $\lambda \in M\left( G\right) $, then by (2.27) $m\in BM_{\theta }%
\left[ (p_{1}^{\prime };(p_{2}^{\prime };(p_{3}^{\prime }\right] $. Thus we
have 
\begin{equation*}
\left\Vert m\right\Vert _{\left[ (p_{1}^{\prime };(p_{2}^{\prime
};(p_{3}^{\prime }\right] _{\theta }}=\sup \left\{ \frac{\left\Vert
B_{m}\left( f,g\right) \right\Vert _{(p_{3}^{\prime },\theta }}{\left\Vert
f\right\Vert _{_{(p_{1}^{\prime },\theta }}\left\Vert g\right\Vert
_{(p_{2}^{\prime },\theta }}:\left\Vert f\right\Vert _{_{(p_{1}^{\prime
},\theta }}\leq 1\text{, }\left\Vert g\right\Vert _{(p_{2}^{\prime },\theta
}\leq 1\right\} \leq C\left\Vert \lambda \right\Vert \text{.}
\end{equation*}
\end{proof}

It is known that the unit operator $I$ is an automorphism of $G$. It is easy
to see the conjugate $I^{\ast }$ of $I$ is an unit operator from $\hat{G}$
into itself. It is continuous, one-to-one and onto. Thus, $I^{\ast }$
becomes an automorphism of $\hat{G}$. Similarly one can easily show that $-I$
and its conjugate $-$ $I^{\ast }$ are authomorphisms of $G$ and $\hat{G}$
respectively. Since $\hat{\lambda}\left( A^{\ast }s+B^{\ast }t\right) =\hat{%
\lambda}\left( s\mp t\right) $, in Proposition 3, one can get $m(s,t)=\hat{%
\lambda}\left( s\mp t\right) $. As an application of this result we can give
the following example.

\begin{example}
\bigskip If $\lambda \in M\left( G\right) $ and $m\left( s,t\right) =\hat{%
\lambda}\left( s\mp t\right) $, then $\hat{\lambda}\in $ $\tilde{M}_{\theta
}[(p_{1}^{\prime };(p_{2}^{\prime };(p_{3}^{\prime }]$ and 
\begin{equation*}
\left\Vert \hat{\lambda}\right\Vert _{\left[ (p_{1}^{\prime };(p_{2}^{\prime
};(p_{3}^{\prime }\right] _{\theta }}\leq C\left\Vert \lambda \right\Vert
_{1}\text{, }C>0
\end{equation*}%
for $\frac{1}{p_{1}^{\prime }}+\frac{1}{p_{2}^{\prime }}=\frac{1}{p^{\prime }%
}$ and $p_{3}^{\prime }p_{3}<p^{\prime }+1$.
\end{example}

\begin{theorem}
\bigskip Let $\frac{1}{p^{\prime }}=\frac{1}{p_{1}^{\prime }}+$ $\frac{1}{%
p_{2}^{\prime }}$ , $p_{3}.p_{3}^{\prime }<p^{\prime }+1$. If $m\left(
s,t\right) =\hat{\Psi}_{1}\left( s\right) \hat{\Phi}\left( s,t\right) \hat{%
\Psi}_{2}\left( t\right) $ such that $\Phi \in L^{1}\left( G\times G\right) $
and $\Psi _{1}$, $\Psi _{2}\in L^{1}\left( G\right) $, then $m\in BM_{\theta
}\left[ (p_{1}^{\prime };(p_{2}^{\prime };(p_{3}^{\prime }\right] $.
\end{theorem}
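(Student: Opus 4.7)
The plan is to recognize $m$ as the Fourier transform of a single $L^1$ function on $G\times G$ and then invoke Corollary 1. The key algebraic identity is that the product of Fourier transforms corresponds to convolution of the underlying functions, so I will build the function whose Fourier transform equals $m$ explicitly.

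First, I would set $F(x,y) := \Psi_1(x)\Psi_2(y)$, so that $F\in L^1(G\times G)$ with $\|F\|_1=\|\Psi_1\|_1\|\Psi_2\|_1$, and a direct computation of the Fourier transform on $G\times G$ (Fubini plus two one-variable Fourier transforms) shows
\begin{equation*}
\hat{F}(s,t)=\hat{\Psi}_1(s)\hat{\Psi}_2(t),\qquad (s,t)\in\hat{G}\times\hat{G}.
\end{equation*}
Next, since $L^1(G\times G)$ is a convolution algebra (Young's inequality with exponent $1$), the function $H:=F*\Phi$ lies in $L^1(G\times G)$ with $\|H\|_1\le \|F\|_1\|\Phi\|_1=\|\Psi_1\|_1\|\Psi_2\|_1\|\Phi\|_1$. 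The convolution theorem on the LCA group $G\times G$ then gives
\begin{equation*}
\hat{H}(s,t)=\hat{F}(s,t)\,\hat{\Phi}(s,t)=\hat{\Psi}_1(s)\hat{\Phi}(s,t)\hat{\Psi}_2(t)=m(s,t).
\end{equation*}

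With this identification in hand, the hypotheses $\frac{1}{p'}=\frac{1}{p_1'}+\frac{1}{p_2'}$ and $p_3\cdot p_3'<p'+1$ are precisely those required by Corollary 1. Applying Corollary 1 to $H\in L^1(G\times G)$ yields $m=\hat{H}\in BM_\theta[(p_1';(p_2';(p_3']$, together with a norm bound of the form
\begin{equation*}
\|m\|_{[(p_1';(p_2';(p_3']_\theta}\le \|H\|_1 \;\le\; \|\Psi_1\|_1\|\Psi_2\|_1\|\Phi\|_1,
\end{equation*}
up to the absolute constant supplied by Corollary 1.

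There is no real obstacle here beyond checking the bookkeeping: one must verify that the Fourier transform on the product group factorizes as claimed (routine Fubini), and that the convolution identity $\widehat{F*\Phi}=\hat{F}\hat{\Phi}$ holds in this setting, which it does because $F,\Phi\in L^1(G\times G)$. The only mildly delicate point is confirming that $H=F*\Phi$ is literally the $L^1$ function whose Fourier transform equals $m$ pointwise a.e.\ on $\hat{G}\times \hat{G}$; since both sides are bounded continuous (or a.e.-equal bounded measurable) functions coinciding by the convolution theorem, this is immediate, and the theorem follows.
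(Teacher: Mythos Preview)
Your proof is correct but takes a genuinely different route from the paper. The paper does not package $m$ as a single Fourier transform; instead it uses the duality characterization (Theorem~1) and absorbs $\hat{\Psi}_1,\hat{\Psi}_2$ into the test functions via $\hat f(s)\hat{\Psi}_1(s)=(f*\Psi_1)^{\wedge}(s)$ and $\hat g(t)\hat{\Psi}_2(t)=(g*\Psi_2)^{\wedge}(t)$, reducing the trilinear form to one involving $B_{\hat{\Phi}}(f*\Psi_1,g*\Psi_2)$. It then applies Corollary~1 only to $\hat{\Phi}$ and separately invokes the fact that $L^{(p_i',\theta}(G)$ is a Banach convolution module over $L^1(G)$ to control $\|f*\Psi_1\|_{(p_1',\theta}$ and $\|g*\Psi_2\|_{(p_2',\theta}$. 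Your argument is more economical: by recognizing $m=\hat H$ with $H=(\Psi_1\otimes\Psi_2)*\Phi\in L^1(G\times G)$, you bypass both Theorem~1 and the module property, invoking Corollary~1 just once. The paper's approach, while longer, makes the role of each ingredient more visible and yields the constant $\|B_{\hat{\Phi}}\|\,\|\Psi_1\|_1\|\Psi_2\|_1$ explicitly; your route gives the same bound (up to the constant hidden in Corollary~1) with less machinery.
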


\begin{proof}
Let $f$, $g$, $h\in C^{\infty }\left( G\right) $. For the proof \ of this
Theorem we will use Theorem 1. Then by Fubini Theorem 
\begin{equation*}
\left\vert \dsum\limits_{s\in \hat{G}}\dsum\limits_{t\in \hat{G}}\hat{f}%
\left( s\right) \hat{g}\left( t\right) \hat{h}\left( s+t\right) m\left(
s,t\right) \right\vert =
\end{equation*}%
\begin{equation*}
=\left\vert \dsum\limits_{s\in \hat{G}}\dsum\limits_{t\in \hat{G}}\hat{f}%
\left( s\right) \hat{g}\left( t\right) \left\{ \underset{G}{\int }h\left(
y\right) \left\langle s+t,-y\right\rangle d\lambda \left( y\right) \right\}
m\left( s,t\right) \right\vert
\end{equation*}%
\begin{equation*}
=\left\vert \underset{G}{\int }h\left( y\right) \left\{ \dsum\limits_{s\in 
\hat{G}}\dsum\limits_{t\in \hat{G}}\hat{f}\left( s\right) \hat{g}\left(
t\right) \hat{\Psi}_{1}\left( s\right) \hat{\Phi}\left( s,t\right) \hat{\Psi}%
_{2}\left( t\right) \left\langle s+t,-y\right\rangle \right\} d\lambda
\left( y\right) \right\vert
\end{equation*}%
\begin{equation*}
=\left\vert \underset{G}{\int }h\left( y\right) \left\{ \dsum\limits_{s\in 
\hat{G}}\dsum\limits_{t\in \hat{G}}\left( f\ast \Psi _{1}\right) \symbol{94}%
\left( s\right) \left( g\ast \Psi _{2}\right) \symbol{94}\left( t\right) 
\hat{\Phi}\left( s,t\right) \left\langle s+t,-y\right\rangle \right\}
d\lambda \left( y\right) \right\vert
\end{equation*}%
\begin{equation}
\leq \underset{G}{\int }\left\vert h\left( y\right) B_{\hat{\Phi}}\left(
f\ast \Psi _{1},g\ast \Psi _{2}\right) \left( -y\right) \right\vert d\lambda
\left( y\right) \text{.}  \tag{2.28}
\end{equation}%
On the other hand $L^{(p_{1}^{\prime },\theta }$ $\left( G\right) $ and $%
L^{(p_{2}^{\prime },\theta }\left( G\right) $ are Banach convolution module
over $L^{1}\left( G\right) $, $\left[ 6\right] .$ So we have $f\ast \Psi
_{1}\in L^{(p_{1}^{\prime },\theta }$ $\left( G\right) $ and $g\ast \Psi
_{2}\in L^{(p_{2}^{\prime },\theta }\left( G\right) $. Also by Corollary 1 , 
$\hat{\Phi}\in BM_{\theta }\left[ (p_{1}^{\prime };(p_{2}^{\prime
};(p_{3}^{\prime }\right] $. Therefore we achieve $B_{\hat{\Phi}}\left(
f\ast \Psi _{1},g\ast \Psi _{2}\right) \in L^{(p_{3}^{\prime },\theta
}\left( G\right) $. By Theorem 2.5 in $\left[ 2\right] $ and the inequality
(2.28), we obtain that 
\begin{equation*}
\left\vert \dsum\limits_{s\in \hat{G}}\dsum\limits_{t\in \hat{G}}\hat{f}%
\left( s\right) \hat{g}\left( t\right) \hat{h}\left( s+t\right) m\left(
s,t\right) \right\vert \leq \left\Vert h\right\Vert _{p_{3}),\theta
}\left\Vert B_{\hat{\Phi}}\left( f\ast \Psi _{1},g\ast \Psi _{2}\right)
\right\Vert _{(p_{3}^{\prime },\theta }
\end{equation*}%
\begin{equation*}
\leq \left\Vert h\right\Vert _{p_{3}),\theta }\left\Vert B_{\hat{\Phi}%
}\right\Vert \left\Vert f\ast \Psi _{1}\right\Vert _{(p_{1}^{\prime },\theta
}\left\Vert g\ast \Psi _{2}\right\Vert _{(p_{2}^{\prime },\theta }
\end{equation*}%
\begin{equation*}
\leq \left\Vert h\right\Vert _{p_{3}),\theta }\left\Vert B_{\hat{\Phi}%
}\right\Vert \left\Vert \Psi _{1}\right\Vert _{1}\left\Vert f\right\Vert
_{(p_{1}^{\prime },\theta }\left\Vert \Psi _{2}\right\Vert _{1}\left\Vert
g\right\Vert _{(p_{2}^{\prime },\theta }\text{.}
\end{equation*}%
Let $C=\left\Vert B_{\hat{\Phi}}\right\Vert \left\Vert \Psi _{1}\right\Vert
_{1}\left\Vert \Psi _{2}\right\Vert _{1}$. Then 
\begin{equation*}
\left\vert \dsum\limits_{s\in \hat{G}}\dsum\limits_{t\in \hat{G}}\hat{f}%
\left( s\right) \hat{g}\left( t\right) \hat{h}\left( s+t\right) m\left(
s,t\right) \right\vert \leq C\left\Vert f\right\Vert _{(p_{1}^{\prime
},\theta }\left\Vert g\right\Vert _{(p_{2}^{\prime },\theta }\left\Vert
h\right\Vert _{p_{3}),\theta }\text{.}
\end{equation*}%
Hence by Theorem 1, we achieve $m\in BM_{\theta }\left[ (p_{1}^{\prime
};(p_{2}^{\prime };(p_{3}^{\prime }\right] $.
\end{proof}

\begin{example}
\bigskip If $K\in L^{1}\left( G\right) $ then $m\left( s,t\right) =\hat{K}%
\left( s-t\right) $ defines a bilinear multiplier in $BM_{\theta
}[(p_{1}^{\prime };(p_{2}^{\prime };(p_{3}^{\prime }]$ and 
\begin{equation*}
\left\Vert m\right\Vert _{\left[ (p_{1}^{\prime };(p_{2}^{\prime
};(p_{3}^{\prime }\right] _{\theta }}\leq C\left\Vert K\right\Vert _{1},~C>0
\end{equation*}
for $\frac{1}{p^{\prime }}=\frac{1}{p_{1}^{\prime }}+$ $\frac{1}{%
p_{2}^{\prime }}$ , $p_{3}.p_{3}^{\prime }<p^{\prime }+1$. \bigskip Indeed
for $f$, $g\in C^{\infty }\left( G\right) \subset $ $L^{1}\left( G\right) $,
since 
\begin{equation*}
f\left( x-y\right) =\dsum\limits_{s\in \hat{G}}\hat{f}\left( s\right)
\left\langle s,x-y\right\rangle
\end{equation*}%
and 
\begin{equation*}
g\left( x+y\right) =\dsum\limits_{t\in \hat{G}}\hat{g}\left( t\right)
\left\langle t,x+y\right\rangle ,
\end{equation*}%
one has 
\begin{equation*}
B_{m}\left( f,g\right) \left( x\right) =\dsum\limits_{s\in \hat{G}%
}\dsum\limits_{t\in \hat{G}}\hat{f}\left( s\right) \hat{g}\left( t\right)
m\left( s,t\right) \left\langle s+t,x\right\rangle
\end{equation*}%
\begin{equation*}
=\dsum\limits_{s\in \hat{G}}\dsum\limits_{t\in \hat{G}}\hat{f}\left(
s\right) \hat{g}\left( t\right) \left( \underset{G}{\int }K\left( y\right)
\left\langle s-t,-y\right\rangle d\lambda \left( y\right) \right)
\left\langle s+t,x\right\rangle
\end{equation*}%
\begin{equation*}
=\underset{G}{\int }\dsum\limits_{s\in \hat{G}}\dsum\limits_{t\in \hat{G}}%
\hat{f}\left( s\right) \hat{g}\left( t\right) K\left( y\right) \left\langle
s-t,-y\right\rangle \left\langle s+t,x\right\rangle d\lambda \left( y\right)
\end{equation*}%
\begin{equation*}
=\underset{G}{\int }\dsum\limits_{s\in \hat{G}}\dsum\limits_{t\in \hat{G}}%
\hat{f}\left( s\right) \hat{g}\left( t\right) K\left( y\right) \left\langle
s,-y\right\rangle \left\langle -t,-y\right\rangle \left\langle
s,x\right\rangle \left\langle t,x\right\rangle d\lambda \left( y\right)
\end{equation*}%
\begin{equation*}
=\underset{G}{\int }\dsum\limits_{s\in \hat{G}}\dsum\limits_{t\in \hat{G}}%
\hat{f}\left( s\right) \hat{g}\left( t\right) K\left( y\right) \left\langle
s,-y\right\rangle \left\langle t,y\right\rangle \left\langle
s,x\right\rangle \left\langle t,x\right\rangle d\lambda \left( y\right)
\end{equation*}%
\begin{equation*}
=\underset{G}{\int }\dsum\limits_{s\in \hat{G}}\dsum\limits_{t\in \hat{G}}%
\hat{f}\left( s\right) \hat{g}\left( t\right) K\left( y\right) \left\langle
s,x-y\right\rangle \left\langle t,x+y\right\rangle d\lambda \left( y\right)
\end{equation*}%
\ 
\begin{equation*}
=\underset{G}{\int }\left( \dsum\limits_{s\in \hat{G}}\hat{f}\left( s\right)
\left\langle s,x-y\right\rangle \right) \left( \dsum\limits_{t\in \hat{G}}%
\hat{g}\left( t\right) \left\langle t,x+y\right\rangle \right) K\left(
y\right) d\lambda \left( y\right)
\end{equation*}%
\begin{equation}
=\underset{G}{\int }f\left( x-y\right) g\left( x+y\right) K\left( y\right)
d\lambda \left( y\right) \text{.}  \tag{2.29}
\end{equation}%
Then from (2.29) and by Lemma 1, 
\begin{equation*}
\left\Vert B_{m}\left( f,g\right) \right\Vert _{(p_{3}^{\prime },\theta
}\leq \underset{G}{\int }\left\Vert f\left( x-y\right) g\left( x+y\right)
\right\Vert _{(p_{3}^{\prime },\theta }\left\vert K\left( y\right)
\right\vert d\lambda \left( y\right)
\end{equation*}%
\begin{equation*}
\leq C\underset{G}{\int }\left\Vert f\left( x-y\right) \right\Vert
_{(p_{1}^{\prime },\theta }\left\Vert g\left( x+y\right) \right\Vert
_{(p_{2}^{\prime },\theta }\left\vert K\left( y\right) \right\vert d\lambda
\left( y\right) ,
\end{equation*}%
for some $C>0.$ Since $\left\Vert T_{-x}f\left( -y\right) \right\Vert
_{(p_{1}^{\prime },\theta }=\left\Vert f\left( y\right) \right\Vert
_{(p_{1}^{\prime },\theta }$ and $\left\Vert T_{-x}g\left( y\right)
\right\Vert _{(p_{2}^{\prime },\theta }=\left\Vert g\left( y\right)
\right\Vert _{(p_{2}^{\prime },\theta }$ by Theorem 2, then%
\begin{equation*}
\left\Vert B_{m}\left( f,g\right) \right\Vert _{(p_{3}^{\prime },\theta
}\leq C\underset{G}{\int }\left\Vert T_{-x}f\left( -y\right) \right\Vert
_{(p_{1}^{\prime },\theta }\left\Vert T_{-x}g\left( y\right) \right\Vert
_{(p_{2}^{\prime },\theta }\left\vert K\left( y\right) \right\vert d\lambda
\left( y\right) .
\end{equation*}%
\begin{equation}
=C\underset{G}{\int }\left\Vert \tilde{f}\right\Vert _{(p_{1}^{\prime
},\theta }\left\Vert g\right\Vert _{(p_{2}^{\prime },\theta }\left\vert
K\left( y\right) \right\vert d\lambda \left( y\right) =C\left\Vert
f\right\Vert _{(p_{1}^{\prime },\theta }\left\Vert g\right\Vert
_{(p_{2}^{\prime },\theta }\left\Vert K\right\Vert _{1}  \tag{2.30}
\end{equation}%
\begin{equation*}
=C_{1}\left\Vert f\right\Vert _{(p_{1}^{\prime },\theta }\left\Vert
g\right\Vert _{(p_{2}^{\prime },\theta },
\end{equation*}%
where $C_{1}=C\left\Vert K\right\Vert _{1}$ and $\tilde{f}\left( y\right)
=f\left( -y\right) $. Thus $m\in $\ $BM_{\theta }\left[ (p_{1}^{\prime
};(p_{2}^{\prime };(p_{3}^{\prime }\right] $. Finally by using (2.30), we
obtain 
\begin{equation*}
\left\Vert m\right\Vert _{\left[ (p_{1}^{\prime };(p_{2}^{\prime
};(p_{3}^{\prime }\right] _{\theta }}=\sup \left\{ \frac{\left\Vert
B_{m}\left( f,g\right) \right\Vert _{(p_{3}^{\prime },\theta }}{\left\Vert
f\right\Vert _{(p_{1}^{\prime },\theta }\left\Vert g\right\Vert
_{(p_{2}^{\prime },\theta }}:\left\Vert f\right\Vert _{(p_{1}^{\prime
},\theta }\leq 1\text{, }\left\Vert g\right\Vert _{(p_{2}^{\prime },\theta
}\leq 1\right\} \leq C\left\Vert K\right\Vert _{1}.
\end{equation*}
\end{example}

\begin{definition}
Let$~1<p_{i}<\infty $, $p_{i}^{\prime }=\frac{p_{i}}{p_{i}-1}$, $\left(
i=1,2,3\right) $ and $\theta >0$. We denote by $\tilde{M}_{\theta
}[(p_{1}^{\prime };(p_{2}^{\prime };(p_{3}^{\prime }]$ the space of
measurable functions $M:\hat{G}\rightarrow 
\mathbb{C}
$ such that $m\left( s,t\right) =M\left( s-t\right) \in $ \ $BM_{\theta
}[(p_{1}^{\prime };(p_{2}^{\prime };(p_{3}^{\prime }]$ that is to say%
\begin{equation*}
B_{M}\left( f,g\right) \left( x\right) =\dsum\limits_{s\in \hat{G}%
}\dsum\limits_{t\in \hat{G}}\hat{f}\left( s\right) \hat{g}\left( t\right)
M\left( s-t\right) \left\langle s+t,x\right\rangle
\end{equation*}%
extends to bounded bilinear map from $L^{(p_{1}^{\prime },\theta }\left(
G\right) \times L^{(p_{2}^{\prime },\theta }\left( G\right) $ to $%
L^{(p_{3}^{\prime },\theta }\left( G\right) $. We denote $\left\Vert
M\right\Vert _{\left[ (p_{1}^{\prime };(p_{2}^{\prime };(p_{3}^{\prime }%
\right] _{\theta }}=\left\Vert B_{M}\right\Vert $.
\end{definition}

\begin{example}
Let $G$ be a locally compact abelian metric group and let $0_{G}$ be the
unit of $G$. Take the bilinear Hardy-Littlewood maximal function on $G$;

\begin{equation*}
M\left( f,g\right) \left( x\right) =\underset{r>0}{\sup }\frac{1}{\lambda
\left( B\left( 0_{G},r\right) \right) }\underset{B\left( 0_{G},r\right) }{%
\int }\left\vert f\left( x-y\right) g\left( x+y\right) \right\vert d\lambda
\left( y\right)
\end{equation*}%
for all $f,g\in L_{loc}^{1}\left( G\right) $, where $B\left( 0_{G},r\right) $
is open ball in $G$. The Hardy-Littlewood maximal function is bounded from $%
L^{(p_{1}^{\prime },\theta }\left( G\right) \times L^{(p_{2}^{\prime
},\theta }\left( G\right) $ to $L^{(p_{3}^{\prime },\theta }\left( G\right) $
whenever $\frac{1}{p^{\prime }}=\frac{1}{p_{1}^{\prime }}+$ $\frac{1}{%
p_{2}^{\prime }}$ and $p_{3}.p_{3}^{\prime }<p^{\prime }+1$.

Take the function 
\begin{equation*}
M\left( y\right) =\frac{1}{\lambda \left( B\left( 0_{G},r\right) \right) }%
\chi _{B\left( 0_{G},r\right) }\left( y\right) .
\end{equation*}%
Since $M\in L^{1}\left( G\right) $, by Proposition 4, $M$ defines a bilinear
multiplier in $\tilde{M}_{\theta }[(p_{1}^{\prime };(p_{2}^{\prime
};(p_{3}^{\prime }]$ and%
\begin{equation}
\left\Vert B_{M}\left( f,g\right) \right\Vert _{(p_{3}^{\prime },\theta
}\leq C\left\Vert f\right\Vert _{(p_{1}^{\prime },\theta }\left\Vert
g\right\Vert _{(p_{2}^{\prime },\theta }\left\Vert K\right\Vert
_{1}=C\left\Vert f\right\Vert _{(p_{1}^{\prime },\theta }\left\Vert
g\right\Vert _{(p_{2}^{\prime },\theta }  \tag{2.31}
\end{equation}%
where 
\begin{equation*}
B_{M}\left( f,g\right) \left( x\right) =\underset{G}{\int }f\left(
x-y\right) g\left( x+y\right) M\left( y\right) d\lambda \left( y\right)
\end{equation*}%
\begin{equation*}
=\frac{1}{\lambda \left( B\left( 0_{G},r\right) \right) }\underset{B\left(
0_{G},r\right) }{\int }f\left( x-y\right) g\left( x+y\right) d\lambda \left(
y\right)
\end{equation*}%
for all $r>0$. By (2.31), we write 
\begin{equation}
M\left( f,g\right) \left( x\right) =\underset{r>0}{\sup }\frac{1}{\lambda
\left( B\left( 0_{G},r\right) \right) }\underset{B\left( 0_{G},r\right) }{%
\int }\left\vert f\left( x-y\right) g\left( x+y\right) \right\vert d\lambda
\left( y\right) =\underset{r>0}{\sup }B_{M}\left( \left\vert f\right\vert
,\left\vert g\right\vert \right) .  \tag{2.32}
\end{equation}%
So by (2.31) we have 
\begin{equation}
\left\Vert B_{M}\left( \left\vert f\right\vert ,\left\vert g\right\vert
\right) \right\Vert _{(p_{3}^{\prime },\theta }\leq C\left\Vert f\right\Vert
_{(p_{1}^{\prime },\theta }\left\Vert g\right\Vert _{(p_{2}^{\prime },\theta
}  \tag{2.33}
\end{equation}%
for all $r>0$. Then by (2.32) and (2.33), we obtain%
\begin{equation*}
\left\Vert M\left( f,g\right) \right\Vert _{(p_{3}^{\prime },\theta }\leq
C\left\Vert f\right\Vert _{(p_{1}^{\prime },\theta }\left\Vert g\right\Vert
_{(p_{2}^{\prime },\theta }.
\end{equation*}%
Therefore $M\left( f,g\right) $ is bounded from $L^{(p_{1}^{\prime },\theta
}\left( G\right) \times L^{(p_{2}^{\prime },\theta }\left( G\right) $ to $%
L^{(p_{3}^{\prime },\theta }\left( G\right) .$
\end{example}

\begin{proposition}
Let $M\in \ell ^{1}\left( \hat{G}\right) $. Then for all $f\in
L^{(p_{1}^{\prime },\theta }$ $\left( G\right) $ and $g\in L^{(p_{2}^{\prime
},\theta }\left( G\right) $ 
\begin{equation*}
B_{M}\left( f,g\right) \left( x\right) =\underset{G}{\int }f\left(
x-y\right) g\left( x+y\right) M^{\vee }\left( y\right) dy.
\end{equation*}
\end{proposition}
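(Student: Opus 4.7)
The plan is to reduce to the computation in Example 3 above via Fourier inversion on the dual. Since $\lambda(G)<\infty$ forces $G$ to be compact, the dual $\hat{G}$ is discrete and countable, so for $M\in\ell^{1}(\hat{G})$ the series $K(y):=M^{\vee}(y)=\sum_{u\in\hat{G}}M(u)\langle u,y\rangle$ converges absolutely to a continuous function $K\in L^{\infty}(G)\subset L^{1}(G)$ satisfying $\hat{K}=M$ by Fourier inversion on the discrete side. The symbol of $B_{M}$ then has the form $M(s-t)=\hat{K}(s-t)$ already treated in Example 3.

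The first step is to apply the computation (2.29) verbatim with this $K$: substitute $M(s-t)=\int_{G}K(y)\langle s-t,-y\rangle\, d\lambda(y)$ into the defining sum, split $\langle s-t,-y\rangle\langle s+t,x\rangle=\langle s,x-y\rangle\langle t,x+y\rangle$, exchange the integral over $G$ with the $\hat{G}\times\hat{G}$ sum by Fubini (justified by the rapid decay of $\hat{f}$, $\hat{g}$ for $f,g\in C^{\infty}(G)$ and the boundedness of $K$), and collapse the resulting inner series back to $f(x-y)$ and $g(x+y)$ via Fourier inversion on $G$. This yields
\begin{equation*}
B_{M}(f,g)(x)=\int_{G}f(x-y)\,g(x+y)\,M^{\vee}(y)\,d\lambda(y)
\end{equation*}
for all $f,g\in C^{\infty}(G)$.

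The final step is to extend this identity to arbitrary $f\in L^{(p_{1}^{\prime },\theta}(G)$ and $g\in L^{(p_{2}^{\prime },\theta}(G)$ by density. The left-hand side defines a continuous bilinear map into $L^{(p_{3}^{\prime },\theta}(G)$ because Example 3, applied to $K=M^{\vee}\in L^{1}(G)$, places $M$ in $BM_{\theta}[(p_{1}^{\prime };(p_{2}^{\prime };(p_{3}^{\prime }]$ with operator norm at most $C\|M^{\vee}\|_{1}$. The right-hand side is likewise continuous, by Minkowski's integral inequality for the Banach function space $L^{(p_{3}^{\prime },\theta}(G)$ combined with translation invariance of the small-Lebesgue norm and the generalized H\"older inequality of Lemma 1, giving the same bound $C\|M^{\vee}\|_{1}\|f\|_{(p_{1}^{\prime },\theta}\|g\|_{(p_{2}^{\prime },\theta}$. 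Since $C^{\infty}(G)\times C^{\infty}(G)$ is dense in the product, the two continuous extensions agree everywhere. The principal technical obstacle is the validity of Minkowski's integral inequality in $L^{(p_{3}^{\prime },\theta}(G)$, whose norm is defined as an infimum of sums rather than a single integral; this is best handled via the duality $(L^{(p_{3}^{\prime },\theta})^{\ast}\simeq L^{p_{3}),\theta}$ exploited in Theorem 1, which linearises the norm and reduces the inequality to the standard Minkowski bound in a grand Lebesgue space.
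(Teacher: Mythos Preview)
Your core computation is exactly the paper's: the authors take $f,g\in C^{\infty}(G)$, write $M(s-t)=\int_{G}M^{\vee}(y)\langle s-t,-y\rangle\,d\lambda(y)$, split the character, swap the $G$-integral with the $\hat G\times\hat G$ sum, and collapse to $\int_{G}f(x-y)g(x+y)M^{\vee}(y)\,d\lambda(y)$. That is the entirety of their proof; they stop there and do not carry out any density extension, so the clause ``for all $f\in L^{(p_{1}',\theta}(G)$, $g\in L^{(p_{2}',\theta}(G)$'' in the statement is left unaddressed.

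Your second step therefore goes beyond the paper rather than diverging from it. One caution: your extension invokes Example~3 and Lemma~1, both of which carry the side conditions $\tfrac{1}{p'}=\tfrac{1}{p_{1}'}+\tfrac{1}{p_{2}'}$ and $p_{3}p_{3}'<p'+1$ that are absent from the hypotheses of Proposition~4. Without those conditions there is no guarantee that $B_{M}$ is bounded into $L^{(p_{3}',\theta}(G)$, so neither side of the identity extends continuously to the full product space and the density argument cannot be run as stated. In other words, your proof of the $C^{\infty}$ case is complete and matches the paper; the extension is a genuine improvement in rigor, but it needs the extra exponent hypotheses to be valid, and the paper simply does not supply (or prove) that part.
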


\begin{proof}
Let $f$, $g\in C^{\infty }\left( G\right) $. Then 
\begin{equation*}
B_{M}\left( f,g\right) \left( x\right) =\dsum\limits_{s\in \hat{G}%
}\dsum\limits_{t\in \hat{G}}\hat{f}\left( s\right) \hat{g}\left( t\right)
M\left( s-t\right) \left\langle s+t,x\right\rangle
\end{equation*}%
\begin{equation*}
=\dsum\limits_{s\in \hat{G}}\dsum\limits_{t\in \hat{G}}\hat{f}\left(
s\right) \hat{g}\left( t\right) \left( \underset{G}{\int }M^{\vee }\left(
y\right) \left\langle s-t,-y\right\rangle d\lambda \left( y\right) \right)
\left\langle s+t,x\right\rangle
\end{equation*}%
\begin{equation*}
=\dsum\limits_{s\in \hat{G}}\dsum\limits_{t\in \hat{G}}\hat{f}\left(
s\right) \hat{g}\left( t\right) \left( \underset{G}{\int }M^{\vee }\left(
y\right) \left\langle s-t,-y\right\rangle \left\langle s+t,x\right\rangle
d\lambda \left( y\right) \right)
\end{equation*}%
\begin{equation*}
=\dsum\limits_{s\in \hat{G}}\dsum\limits_{t\in \hat{G}}\hat{f}\left(
s\right) \hat{g}\left( t\right) \left( \underset{G}{\int }\check{M}\left(
y\right) \left\langle s,-y\right\rangle \left\langle t,y\right\rangle
\left\langle s,x\right\rangle \left\langle t,x\right\rangle d\lambda \left(
y\right) \right)
\end{equation*}%
\begin{equation*}
=\dsum\limits_{s\in \hat{G}}\dsum\limits_{t\in \hat{G}}\hat{f}\left(
s\right) \hat{g}\left( t\right) \left( \underset{G}{\int }M^{\vee }\left(
y\right) \left\langle s,x-y\right\rangle \left\langle t,x+y\right\rangle
d\lambda \left( y\right) \right)
\end{equation*}%
\begin{equation*}
=\underset{G}{\int }\check{M}\left( y\right) \left( \dsum\limits_{s\in \hat{G%
}}\hat{f}\left( s\right) \left\langle s,x-y\right\rangle \right) \left(
\dsum\limits_{t\in \hat{G}}\hat{g}\left( t\right) \left\langle
t,x+y\right\rangle \right) d\lambda \left( y\right)
\end{equation*}%
\begin{equation*}
=\underset{G}{\int }f\left( x-y\right) g\left( x+y\right) M^{\vee }\left(
y\right) d\lambda \left( y\right) .
\end{equation*}
\end{proof}

\begin{proposition}
Let $K\in \ell ^{1}\left( \hat{G}\right) $. Then the following equalities
are satisfied;
\end{proposition}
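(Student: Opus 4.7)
The statement of Proposition 6 is truncated in the excerpt, so my plan is to describe the uniform method by which any identity for $B_K$ of the kinds typical in this paper is established, given that the hypothesis $K\in\ell^1(\hat G)$ places us squarely in the setting of Proposition 5. That proposition already supplies the representation
\[
B_K(f,g)(x) \;=\; \int_G f(x-y)\,g(x+y)\,K^{\vee}(y)\,d\lambda(y),
\]
and every further identity for $B_K$ is obtained by manipulating this single integral.

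For each equality claimed, the plan is to substitute the relevant transformation (a translation $T_a$ on $f$ or $g$, a modulation $M_a$, a reflection $\tilde f(y)=f(-y)$, or a dilation $D_A^{p'}$) into the right-hand side above, then perform one change of variable in $y$ and collect the resulting factors. Two underlying mechanisms do all the work. The first is the substitution $y\mapsto y-c$, which shifts the $f$- and $g$-arguments by $\mp c$ and replaces $K^{\vee}(y)$ by $K^{\vee}(y-c)$; by the duality between translation and modulation used in the proof of Theorem 2, this last factor is the inverse Fourier transform of a modulate of $K$, so translation identities on the inputs become modulation identities on the kernel. The second is the character identity $\langle x-y,a\rangle\langle x+y,b\rangle=\langle x,a+b\rangle\langle y,b-a\rangle$, which separates the $x$- and $y$-dependence whenever $f$ and $g$ are modulated, converting modulations of the inputs into a modulation of $B_K(f,g)$ by $a+b$ together with a translate of $K^{\vee}$ by $b-a$.

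The only genuine obstacle is bookkeeping. Because $f$ is evaluated at $x-y$ while $g$ is evaluated at $x+y$, symmetric operations on the pair $(f,g)$ (identical translation, identical modulation, or a joint reflection $y\mapsto -y$) collapse cleanly into the corresponding operation applied to $B_K(f,g)$, whereas asymmetric operations produce mixed shifts that must be absorbed into the kernel via the rule $(T_c h)^{\wedge}(s)=\langle c,-s\rangle\hat h(s)$ already exploited in Theorem 2. Once the symmetric and asymmetric cases are distinguished, each equality in the proposition reduces to one change of variable in the outer integral, one invocation of translation/modulation duality under $\vee$, and a collection of constants, with no analytic difficulty beyond the already-established $\ell^1$-convergence of the series defining $K^{\vee}$.
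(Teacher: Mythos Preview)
Your plan is sound and would yield correct proofs of both identities in the proposition (which, for reference, are $B_{T_{y_1+y_2}K}(f,g)=M_{y_1-y_2}B_K(M_{-y_1}f,M_{y_2}g)$ and $B_{M_yK}(f,g)=B_K(T_{-y}f,T_yg)$), but your route is genuinely different from the paper's. You propose to pass through the physical-side representation $B_K(f,g)(x)=\int_G f(x-y)g(x+y)K^{\vee}(y)\,d\lambda(y)$ and then manipulate the kernel $K^{\vee}$ via changes of variable and the character identity you wrote down; the paper instead never leaves the Fourier side, working directly with the defining double sum $\sum_{s}\sum_{t}\hat f(s)\hat g(t)K(s-t)\langle s+t,x\rangle$, making the substitutions $u=s-y_1$, $v=t+y_2$ (for part (a)) and invoking $(M_{-y_1}f)^{\wedge}=T_{-y_1}\hat f$, $(T_{-y}f)^{\wedge}=M_y\hat f$ exactly as in Theorem~2. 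Your approach has the advantage of making the translation/modulation duality transparent at the level of the convolution-type integral and reuses the preceding proposition efficiently; the paper's approach is more self-contained (it does not require the integral representation at all) and keeps everything as algebra on $\hat G$. Both are equally elementary, and neither involves any analytic subtlety beyond the $\ell^1$ hypothesis. One small numbering slip: what you call Proposition~5 (the integral formula) is Proposition~4 in the paper, and the present statement is Proposition~5.
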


\textbf{a)} $B_{T_{y_{1}+y_{2}}K}\left( f,g\right)
=M_{y_{1}-y_{2}}B_{K}\left( M_{-y_{1}}f,M_{y_{2}}g\right) $, $y_{1},$ $%
y_{2}\in \hat{G},$

\textbf{b)} $B_{M_{y}K}\left( f,g\right) =B_{K}\left( T_{-y}f,T_{y}g\right) $%
, $y\in \hat{G}.$

\begin{proof}
\textbf{a) }Let $f$, $g\in C^{\infty }\left( G\right) $ and let $y_{1},$ $%
y_{2}\in \hat{G}.$ If we make the substitutions $s-y_{1}=u$ and $t+y_{2}=v$,
then we have 
\begin{equation*}
B_{T_{y_{1}+y_{2}}K}\left( f,g\right) \left( x\right) =\dsum\limits_{s\in 
\hat{G}}\dsum\limits_{t\in \hat{G}}\hat{f}\left( s\right) \hat{g}\left(
t\right) T_{y_{1}+y_{2}}K\left( s-t\right) \left\langle s+t,x\right\rangle
\end{equation*}%
\begin{equation*}
=\dsum\limits_{s\in \hat{G}}\dsum\limits_{t\in \hat{G}}\hat{f}\left(
s\right) \hat{g}\left( t\right) K\left( s-t-y_{1}-y_{2}\right) \left\langle
s+t,x\right\rangle
\end{equation*}%
\begin{equation*}
=\dsum\limits_{u\in \hat{G}}\dsum\limits_{v\in \hat{G}}\hat{f}\left(
u+y_{1}\right) \hat{g}\left( v-y_{2}\right) K\left( u-v\right) \left\langle
u+v+y_{1}-y_{2},x\right\rangle
\end{equation*}%
\begin{equation*}
=\dsum\limits_{u\in \hat{G}}\dsum\limits_{v\in \hat{G}}T_{-y_{1}}\hat{f}%
\left( u\right) T_{y_{2}}\hat{g}\left( v\right) K\left( u-v\right)
\left\langle u+v,x\right\rangle \left\langle y_{1}-y_{2},x\right\rangle
\end{equation*}%
\begin{equation*}
=\left\langle y_{1}-y_{2},x\right\rangle \dsum\limits_{u\in \hat{G}%
}\dsum\limits_{v\in \hat{G}}\left( M_{-y_{1}}f\right) \symbol{94}\left(
u\right) \left( M_{y_{2}}g\right) \symbol{94}\left( v\right) K\left(
u-v\right) \left\langle u+v,x\right\rangle
\end{equation*}%
\begin{equation*}
=\left\langle y_{1}-y_{2},x\right\rangle B_{K}\left(
M_{-y_{1}}f,M_{y_{2}}g\right) \left( x\right) =M_{y_{1}-y_{2}}B_{K}\left(
M_{-y_{1}}f,M_{y_{2}}g\right) \left( x\right)
\end{equation*}

\textbf{b) }Let\textbf{\ }$f$, $g\in C^{\infty }\left( G\right) $ and let $%
y\in \hat{G}$. Then 
\begin{equation*}
B_{M_{y}K}\left( f,g\right) \left( x\right) =\dsum\limits_{s\in \hat{G}%
}\dsum\limits_{t\in \hat{G}}\hat{f}\left( s\right) \hat{g}\left( t\right)
M_{y}K\left( s-t\right) \left\langle s+t,x\right\rangle
\end{equation*}%
\begin{equation*}
=\dsum\limits_{s\in \hat{G}}\dsum\limits_{t\in \hat{G}}\hat{f}\left(
s\right) \hat{g}\left( t\right) \left\langle s-t,y\right\rangle K\left(
s-t\right) \left\langle s+t,x\right\rangle
\end{equation*}%
\begin{equation*}
=\dsum\limits_{s\in \hat{G}}\dsum\limits_{t\in \hat{G}}\hat{f}\left(
s\right) \hat{g}\left( t\right) \left\langle s,y\right\rangle \left\langle
-t,y\right\rangle K\left( s-t\right) \left\langle s+t,x\right\rangle
\end{equation*}%
\begin{equation*}
=\dsum\limits_{s\in \hat{G}}\dsum\limits_{t\in \hat{G}}\left\langle
s,y\right\rangle \hat{f}\left( s\right) \left\langle t,-y\right\rangle \hat{g%
}\left( t\right) K\left( s-t\right) \left\langle s+t,x\right\rangle
\end{equation*}%
\begin{equation*}
=\dsum\limits_{s\in \hat{G}}\dsum\limits_{t\in \hat{G}}M_{y}\hat{f}\left(
s\right) M_{-y}\hat{g}\left( t\right) K\left( s-t\right) \left\langle
s+t,x\right\rangle
\end{equation*}%
\begin{equation*}
=\dsum\limits_{s\in \hat{G}}\dsum\limits_{t\in \hat{G}}\left( T_{-y}f\right) 
\symbol{94}\left( s\right) \left( T_{y}g\right) \symbol{94}\left( t\right)
K\left( s-t\right) \left\langle s+t,x\right\rangle =B_{K}\left(
T_{-y}f,T_{y}g\right) \left( x\right) .
\end{equation*}
\end{proof}

\begin{theorem}
Let $K\in \tilde{M}_{\theta }[(p_{1}^{\prime };(p_{2}^{\prime
};(p_{3}^{\prime }]$.

\textbf{a)} If $\Phi \in \ell ^{1}\left( \hat{G}\right) $, then $\Phi \ast
K\in \tilde{M}_{\theta }[(p_{1}^{\prime };(p_{2}^{\prime };(p_{3}^{\prime }]$
and 
\begin{equation*}
\left\Vert \Phi \ast K\right\Vert _{\left[ (p_{1}^{\prime };(p_{2}^{\prime
};(p_{3}^{\prime }\right] _{\theta }}\leq \left\Vert \Phi \right\Vert _{\ell
^{1}}\left\Vert K\right\Vert _{\left[ (p_{1}^{\prime };(p_{2}^{\prime
};(p_{3}^{\prime }\right] _{\theta }}\text{.}
\end{equation*}

\textbf{b)} If $\Phi \in L^{1}\left( G\right) $, then $\hat{\Phi}K\in \tilde{%
M}_{\theta }[(p_{1}^{\prime };(p_{2}^{\prime };(p_{3}^{\prime }]$ and 
\begin{equation*}
\left\Vert \hat{\Phi}K\right\Vert _{\left[ (p_{1}^{\prime };(p_{2}^{\prime
};(p_{3}^{\prime }\right] _{\theta }}\leq \left\Vert \Phi \right\Vert
_{1}\left\Vert K\right\Vert _{\left[ (p_{1}^{\prime };(p_{2}^{\prime
};(p_{3}^{\prime }\right] _{\theta }}\text{.}
\end{equation*}
\end{theorem}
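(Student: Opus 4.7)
The plan is to reduce both parts to the translation/modulation identities recorded in Proposition 5, which together with the isometric action of $T_y$ and $M_y$ on $L^{(p',\theta}\left(G\right)$ lets each $B_{\Phi\ast K}$ and $B_{\hat{\Phi}K}$ be written as a superposition of copies of $B_K$ whose norms all equal $\|K\|_{[(p_1';(p_2';(p_3']_{\theta}}$.

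For part \textbf{a)}, I would first fix $f,g\in C^{\infty}\left(G\right)$ and write $(\Phi\ast K)(s-t)=\sum_{y\in\hat{G}}\Phi(y)K(s-t-y)=\sum_{y}\Phi(y)(T_{y}K)(s-t)$. Interchanging the $y$-sum with the bilinear sum (justified by the $\ell^{1}$ summability of $\Phi$), this yields $B_{\Phi\ast K}(f,g)(x)=\sum_{y\in\hat{G}}\Phi(y)B_{T_{y}K}(f,g)(x)$. Then Proposition 5(a), with $y_{1}=y$ and $y_{2}=0$, gives $B_{T_{y}K}(f,g)=M_{y}B_{K}(M_{-y}f,g)$. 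Since $|M_{z}h|=|h|$ pointwise and the norm $\|\cdot\|_{(p',\theta}$ depends only on $|\cdot|$, the modulations are isometries on each small Lebesgue space, so $\|B_{T_{y}K}(f,g)\|_{(p_{3}',\theta}\le \|K\|_{[(p_1';(p_2';(p_3']_{\theta}}\|f\|_{(p_{1}',\theta}\|g\|_{(p_{2}',\theta}$ uniformly in $y$. Applying Minkowski's inequality in the $y$-sum then produces the desired bound with factor $\|\Phi\|_{\ell^{1}}$.

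For part \textbf{b)}, the analogous identity is $\hat{\Phi}(s-t)=\int_{G}\Phi(y)\langle s-t,-y\rangle\,d\lambda(y)$, which rewrites $\hat{\Phi}(s-t)K(s-t)=\int_{G}\Phi(y)(M_{-y}K)(s-t)\,d\lambda(y)$. Using Fubini (valid because $\Phi\in L^{1}(G)$ and the inner bilinear sum is finite on test functions), I would obtain $B_{\hat{\Phi}K}(f,g)(x)=\int_{G}\Phi(y)B_{M_{-y}K}(f,g)(x)\,d\lambda(y)$. Proposition 5(b) then converts this into $B_{M_{-y}K}(f,g)=B_{K}(T_{y}f,T_{-y}g)$. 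Translation invariance of the small Lebesgue norm (which follows directly from the translation invariance of $\lambda$ in the defining $L^{p-\varepsilon}$ norms) gives $\|T_{\pm y}h\|_{(p',\theta}=\|h\|_{(p',\theta}$, so each integrand is bounded by $|\Phi(y)|\,\|K\|_{[(p_1';(p_2';(p_3']_{\theta}}\|f\|_{(p_{1}',\theta}\|g\|_{(p_{2}',\theta}$. Integrating in $y$ yields the factor $\|\Phi\|_{1}$ and completes the proof.

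The main substantive point to verify carefully is the invariance claim: that $M_{z}$ and $T_{y}$ act as isometries on $L^{(p',\theta}\left(G\right)$. This is where one must unfold the decomposition-based definition of $\|\cdot\|_{(p',\theta}$ and observe that the decomposition $g=\sum g_{k}$ may be translated or modulated term by term without altering the $\ell^{1}\big((p-\varepsilon)'\big)$-type quantity being infimized, since the underlying $L^{p-\varepsilon}(G)$ norms are themselves invariant under $T_{y}$ and $M_{z}$. Once this invariance is in hand, both parts are straightforward applications of Proposition 5 combined with Minkowski's inequality in the sum (resp.\ the integral), and there is no genuine analytical obstacle beyond standard Fubini-type interchanges.
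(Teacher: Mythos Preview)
Your proposal is correct and follows essentially the same route as the paper: both expand $\Phi\ast K$ (resp.\ $\hat{\Phi}K$) as a superposition of translates $T_{u}K$ (resp.\ modulates $M_{-u}K$), apply Proposition~5 to reduce to $B_{K}$ applied to modulated (resp.\ translated) inputs, and then use the isometric action of $M_{z}$ and $T_{y}$ on the small Lebesgue norms together with Minkowski to sum or integrate the bound. Your explicit remark on why $T_{y}$ and $M_{z}$ are isometries on $L^{(p',\theta}(G)$ is a welcome clarification that the paper uses tacitly.
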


\begin{proof}
\textbf{a)} \textbf{\ }Take any $f$, $g\in C^{\infty }\left( G\right) $ and
use Proposition 5

\begin{equation*}
B_{\Phi \ast K}\left( f,g\right) \left( x\right) =\dsum\limits_{s\in \hat{G}%
}\dsum\limits_{t\in \hat{G}}\hat{f}\left( s\right) \hat{g}\left( t\right)
\left( \Phi \ast K\right) \left( s-t\right) \left\langle s+t,x\right\rangle
\end{equation*}%
\begin{equation*}
=\dsum\limits_{s\in \hat{G}}\dsum\limits_{t\in \hat{G}}\hat{f}\left(
s\right) \hat{g}\left( t\right) \left( \dsum\limits_{u\in \hat{G}}K\left(
s-t-u\right) \Phi \left( u\right) \right) \left\langle s+t,x\right\rangle
\end{equation*}%
\begin{equation*}
=\dsum\limits_{u\in \hat{G}}\left( \dsum\limits_{s\in \hat{G}%
}\dsum\limits_{t\in \hat{G}}\hat{f}\left( s\right) \hat{g}\left( t\right)
K\left( s-t-u\right) \left\langle s+t,x\right\rangle \right) \Phi \left(
u\right)
\end{equation*}%
\begin{equation*}
=\dsum\limits_{u\in \hat{G}}\left( \dsum\limits_{s\in \hat{G}%
}\dsum\limits_{t\in \hat{G}}\hat{f}\left( s\right) \hat{g}\left( t\right)
T_{u}K\left( s-t\right) \left\langle s+t,x\right\rangle \right) \Phi \left(
u\right)
\end{equation*}%
\begin{equation*}
=\dsum\limits_{u\in \hat{G}}B_{T_{u}K}\left( f,g\right) \left( x\right) \Phi
\left( u\right) =\dsum\limits_{u\in \hat{G}}B_{T_{u+0_{\hat{G}}}K}\left(
f,g\right) \left( x\right) \Phi \left( u\right)
\end{equation*}%
By Proposition 5 and the last equality%
\begin{equation*}
B_{\Phi \ast K}\left( f,g\right) \left( x\right) =\dsum\limits_{u\in \hat{G}%
}B_{T_{u+0_{\hat{G}}}K}\left( f,g\right) \left( x\right) \Phi \left( u\right)
\end{equation*}%
\begin{equation}
=\dsum\limits_{u\in \hat{G}}M_{u-0_{\hat{G}}}B_{K}\left( M_{-u}f,M_{0_{\hat{G%
}}}g\right) \left( x\right) \Phi \left( u\right) =\dsum\limits_{u\in \hat{G}%
}M_{u-0_{\hat{G}}}B_{K}\left( M_{-u}f,M_{0_{\hat{G}}}g\right) \left(
x\right) \Phi \left( u\right) .  \tag{2.34}
\end{equation}

Since $K\in \tilde{M}_{\theta }[(p_{1}^{\prime };(p_{2}^{\prime
};(p_{3}^{\prime }]$, by (2.34), we have 
\begin{equation*}
\left\Vert B_{\Phi \ast K}\left( f,g\right) \right\Vert _{(p_{3}^{\prime
},\theta }\leq \dsum\limits_{u\in \hat{G}}\left\Vert M_{u-0_{\hat{G}%
}}B_{K}\left( M_{-u}f,M_{0_{\hat{G}}}g\right) \left( x\right) \Phi \left(
u\right) \right\Vert _{(p_{3}^{\prime },\theta }
\end{equation*}%
\begin{equation*}
=\dsum\limits_{u\in \hat{G}}\left\Vert B_{K}\left( M_{-u}f,M_{0_{\hat{G}%
}}g\right) \left( x\right) \Phi \left( u\right) \right\Vert _{(p_{3}^{\prime
},\theta }
\end{equation*}%
\begin{equation*}
\leq \dsum\limits_{u\in \hat{G}}\left\vert \Phi \left( u\right) \right\vert
\left\Vert K\right\Vert _{\left[ (p_{1}^{\prime };(p_{2}^{\prime
};(p_{3}^{\prime }\right] _{\theta }}\left\Vert M_{-u}f\right\Vert
_{(p_{1}^{\prime },\theta }\left\Vert g\right\Vert _{(p_{2}^{\prime },\theta
}
\end{equation*}%
\begin{equation}
=\left\Vert M\right\Vert _{\left[ (p_{1}^{\prime };(p_{2}^{\prime
};(p_{3}^{\prime }\right] _{\theta }}\left\Vert \Phi \right\Vert _{\ell
^{1}}\left\Vert f\right\Vert _{(p_{1}^{\prime },\theta }\left\Vert
g\right\Vert _{(p_{2}^{\prime },\theta }<\infty \text{.}  \tag{2.35}
\end{equation}

Hence $\Phi \ast $ $K\in \tilde{M}_{\theta }[(p_{1}^{\prime };(p_{2}^{\prime
};(p_{3}^{\prime }]$ . Finally by (2.35), we obtain 
\begin{equation*}
\left\Vert \Phi \ast K\right\Vert _{\left[ (p_{1}^{\prime };(p_{2}^{\prime
};(p_{3}^{\prime }\right] _{\theta }}\leq \left\Vert \Phi \right\Vert _{\ell
^{1}}\left\Vert K\right\Vert _{\left[ (p_{1}^{\prime };(p_{2}^{\prime
};(p_{3}^{\prime }\right] _{\theta }}\text{.}
\end{equation*}

\textbf{b) }Let\textbf{\ } $f$, $g\in C^{\infty }\left( G\right) .$ Then by
Propositon 5

\begin{equation*}
B_{\hat{\Phi}K}\left( f,g\right) \left( x\right) =\dsum\limits_{s\in \hat{G}%
}\dsum\limits_{t\in \hat{G}}\hat{f}\left( s\right) \hat{g}\left( t\right) 
\hat{\Phi}K\left( s-t\right) \left\langle s+t,x\right\rangle
\end{equation*}%
\begin{equation*}
=\dsum\limits_{s\in \hat{G}}\dsum\limits_{t\in \hat{G}}\hat{f}\left(
s\right) \hat{g}\left( t\right) \left( \underset{G}{\dint }\Phi \left(
u\right) \left\langle s-t,-u\right\rangle d\lambda \left( u\right) \right)
K\left( s-t\right) \left\langle s+t,x\right\rangle
\end{equation*}%
\begin{equation*}
=\underset{G}{\dint }\Phi \left( u\right) \left( \dsum\limits_{s\in \hat{G}%
}\dsum\limits_{t\in \hat{G}}\hat{f}\left( s\right) \hat{g}\left( t\right)
\left\langle s-t,-u\right\rangle K\left( s-t\right) \left\langle
s+t,x\right\rangle \right) d\lambda \left( u\right)
\end{equation*}%
\begin{equation*}
=\underset{G}{\dint }\Phi \left( u\right) \left( \dsum\limits_{s\in \hat{G}%
}\dsum\limits_{t\in \hat{G}}\hat{f}\left( s\right) \hat{g}\left( t\right)
M_{-u}K\left( s-t\right) \left\langle s+t,x\right\rangle \right) d\lambda
\left( u\right)
\end{equation*}%
\begin{equation}
=\underset{G}{\dint }\Phi \left( u\right) B_{M_{-u}K}\left( f,g\right)
\left( x\right) d\lambda \left( u\right) =\underset{G}{\dint }\Phi \left(
u\right) B_{K}\left( T_{u}f,T_{-u}g\right) \left( x\right) d\lambda \left(
u\right) .  \tag{2.36}
\end{equation}

Since $K\in \tilde{M}_{\theta }[(p_{1}^{\prime };(p_{2}^{\prime
};(p_{3}^{\prime }]$, by (2.36), we obtain 
\begin{equation*}
\left\Vert B_{\hat{\Phi}K}\left( f,g\right) \right\Vert _{(p_{3}^{\prime
},\theta }\leq \underset{G}{\dint }\left\Vert \Phi \left( u\right)
B_{K}\left( T_{u}f,T_{-u}g\right) \right\Vert _{(p_{3}^{\prime },\theta
}d\lambda \left( u\right)
\end{equation*}%
\begin{equation*}
\leq \underset{G}{\dint }\left\vert \Phi \left( u\right) \right\vert
\left\Vert K\right\Vert _{\left[ (p_{1}^{\prime };(p_{2}^{\prime
};(p_{3}^{\prime }\right] _{\theta }}\left\Vert T_{u}f\right\Vert
_{(p_{1}^{\prime },\theta }\left\Vert T_{-u}g\right\Vert _{(p_{2}^{\prime
},\theta }d\lambda \left( u\right)
\end{equation*}%
\begin{equation*}
=\underset{G}{\dint }\left\vert \Phi \left( u\right) \right\vert \left\Vert
K\right\Vert _{\left[ (p_{1}^{\prime };(p_{2}^{\prime };(p_{3}^{\prime }%
\right] _{\theta }}\left\Vert f\right\Vert _{(p_{1}^{\prime },\theta
}\left\Vert g\right\Vert _{(p_{2}^{\prime },\theta }d\lambda \left( u\right)
\end{equation*}%
\begin{equation}
=\left\Vert K\right\Vert _{\left[ (p_{1}^{\prime };(p_{2}^{\prime
};(p_{3}^{\prime }\right] _{\theta }}\left\Vert \Phi \right\Vert
_{1}\left\Vert f\right\Vert _{(p_{1}^{\prime },\theta }\left\Vert
g\right\Vert _{(p_{2}^{\prime },\theta }<\infty .  \tag{2.37}
\end{equation}

Finally $\hat{\Phi}K\in \tilde{M}_{\theta }[(p_{1}^{\prime };(p_{2}^{\prime
};(p_{3}^{\prime }]$ and by (2.37) 
\begin{equation*}
\left\Vert \hat{\Phi}K\right\Vert _{\left[ (p_{1}^{\prime };(p_{2}^{\prime
};(p_{3}^{\prime }\right] _{\theta }}\leq \left\Vert \Phi \right\Vert
_{1}\left\Vert K\right\Vert _{\left[ (p_{1}^{\prime };(p_{2}^{\prime
};(p_{3}^{\prime }\right] _{\theta }}\text{.}
\end{equation*}
\end{proof}

\begin{proposition}
Let $\Phi \in L^{1}\left( G\right) $ and $M\in \tilde{M}_{\theta
}[(p_{1}^{\prime };(p_{2}^{\prime };(p_{3}^{\prime }]$. Then $m\left(
s,t\right) =M\left( s-t\right) \hat{\Phi}\left( s+t\right) \in BM_{\theta }%
\left[ (p_{1}^{\prime };(p_{2}^{\prime };(p_{3}^{\prime }\right] $ and 
\begin{equation*}
\left\Vert m\right\Vert _{\left[ (p_{1}^{\prime };(p_{2}^{\prime
};(p_{3}^{\prime }\right] _{\theta }}\leq \left\Vert \Phi \right\Vert
_{1}\left\Vert M\right\Vert _{\left[ (p_{1}^{\prime };(p_{2}^{\prime
};(p_{3}^{\prime }\right] _{\theta }}.
\end{equation*}
\end{proposition}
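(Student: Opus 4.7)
The plan is to rewrite $B_{m}(f,g)$ as a convolution of $\Phi$ against $B_{M}(f,g)$ and then exploit the $L^{1}(G)$-module structure of $L^{(p_{3}^{\prime},\theta}(G)$ together with the hypothesis $M\in\tilde{M}_{\theta}[(p_{1}^{\prime};(p_{2}^{\prime};(p_{3}^{\prime}]$.

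First I would take $f,g\in C^{\infty}(G)$ and substitute the Fourier representation
\begin{equation*}
\hat{\Phi}(s+t)=\int_{G}\Phi(y)\langle s+t,-y\rangle\, d\lambda(y)
\end{equation*}
into the defining series
\begin{equation*}
B_{m}(f,g)(x)=\sum_{s\in\hat{G}}\sum_{t\in\hat{G}}\hat{f}(s)\hat{g}(t)M(s-t)\hat{\Phi}(s+t)\langle s+t,x\rangle.
\end{equation*}
After swapping the discrete double sum with the integral over $G$ (justified since $\hat{G}$ is countable and $f,g\in C^{\infty}(G)$ force rapid decay of $\hat{f},\hat{g}$, while $M$ and $\Phi$ are integrable on the relevant measure spaces), the pairing $\langle s+t,-y\rangle\langle s+t,x\rangle$ collapses to $\langle s+t,x-y\rangle$, and the remaining double sum is exactly $B_{M}(f,g)(x-y)$. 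This yields the key identity
\begin{equation*}
B_{m}(f,g)(x)=\int_{G}\Phi(y)B_{M}(f,g)(x-y)\,d\lambda(y)=\bigl(\Phi\ast B_{M}(f,g)\bigr)(x).
\end{equation*}

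Next I would take the $L^{(p_{3}^{\prime},\theta}$-norm in $x$ and apply Minkowski's integral inequality combined with translation invariance of that norm (equivalently, the fact used in the proof of Theorem 6 that $L^{(p_{3}^{\prime},\theta}(G)$ is a Banach convolution module over $L^{1}(G)$, cited from $[6]$). This gives
\begin{equation*}
\|B_{m}(f,g)\|_{(p_{3}^{\prime},\theta}\leq\|\Phi\|_{1}\|B_{M}(f,g)\|_{(p_{3}^{\prime},\theta}.
\end{equation*}
Invoking the hypothesis $M\in\tilde{M}_{\theta}[(p_{1}^{\prime};(p_{2}^{\prime};(p_{3}^{\prime}]$ and applying the defining inequality for $\|M\|_{[(p_{1}^{\prime};(p_{2}^{\prime};(p_{3}^{\prime}]_{\theta}}$ bounds the right-hand side further by
\begin{equation*}
\|\Phi\|_{1}\|M\|_{[(p_{1}^{\prime};(p_{2}^{\prime};(p_{3}^{\prime}]_{\theta}}\|f\|_{(p_{1}^{\prime},\theta}\|g\|_{(p_{2}^{\prime},\theta}.
\end{equation*}
Since $C^{\infty}(G)$ is dense in the relevant small Lebesgue spaces, this shows $m\in BM_{\theta}[(p_{1}^{\prime};(p_{2}^{\prime};(p_{3}^{\prime}]$, and taking the supremum over $\|f\|_{(p_{1}^{\prime},\theta},\|g\|_{(p_{2}^{\prime},\theta}\leq 1$ delivers the claimed norm bound.

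The main obstacle is the clean recognition of the convolution structure, namely the Fubini swap between the double $\hat{G}$-sum and the $G$-integral, together with the appeal to the $L^{1}(G)$-module property of $L^{(p_{3}^{\prime},\theta}(G)$; both are used implicitly elsewhere in the paper, so once the identity $B_{m}(f,g)=\Phi\ast B_{M}(f,g)$ is isolated, the remainder is a routine norm estimate.
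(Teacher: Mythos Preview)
Your proposal is correct and follows essentially the same route as the paper's own proof: expand $\hat{\Phi}(s+t)$ as an integral over $G$, interchange with the double sum over $\hat{G}$ to obtain $B_{m}(f,g)=\Phi\ast B_{M}(f,g)$, and then combine the $L^{1}(G)$-module property of $L^{(p_{3}^{\prime},\theta}(G)$ from $[6]$ with the multiplier bound for $M$. The paper's argument is identical up to notation, so there is nothing to add.
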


\begin{proof}
Let\textbf{\ } $f$, $g\in C^{\infty }\left( G\right) $, Then for all $x\in G$%
, we have 
\begin{equation*}
B_{m}\left( f,g\right) \left( x\right) =\dsum\limits_{s\in \hat{G}%
}\dsum\limits_{t\in \hat{G}}\hat{f}\left( s\right) \hat{g}\left( t\right)
M\left( s-t\right) \hat{\Phi}\left( s+t\right) \left\langle
s+t,x\right\rangle
\end{equation*}%
\begin{equation*}
=\dsum\limits_{s\in \hat{G}}\dsum\limits_{t\in \hat{G}}\hat{f}\left(
s\right) \hat{g}\left( t\right) M\left( s-t\right) \left( \underset{G}{\dint 
}\Phi \left( u\right) \left\langle s+t,-u\right\rangle d\lambda \left(
u\right) \right) \left\langle s+t,x\right\rangle
\end{equation*}%
\begin{equation*}
=\underset{G}{\dint }\Phi \left( u\right) \left( \dsum\limits_{s\in \hat{G}%
}\dsum\limits_{t\in \hat{G}}\hat{f}\left( s\right) \hat{g}\left( t\right)
M\left( s-t\right) \left\langle s+t,x-u\right\rangle \right) d\lambda \left(
u\right)
\end{equation*}%
\begin{equation}
=\underset{G}{\dint }\Phi \left( u\right) B_{M}\left( f,g\right) \left(
x-u\right) d\lambda \left( u\right) =\Phi \ast B_{M}\left( f,g\right) \left(
x\right) .  \tag{2.38}
\end{equation}%
On the other hand Since $L^{(p_{3}^{\prime },\theta }\left( G\right) $ is a
Banach convolution module over $L^{1}\left( G\right) ,$ $\left[ 6\right] $
and $M\in \tilde{M}_{\theta }[(p_{1}^{\prime };(p_{2}^{\prime
};(p_{3}^{\prime }]$, by (2.38) we have 
\begin{equation*}
\left\Vert B_{m}\left( f,g\right) \right\Vert _{(p_{3}^{\prime },\theta
}=\left\Vert \Phi \ast B_{M}\left( f,g\right) \right\Vert _{(p_{3}^{\prime
},\theta }\leq \left\Vert B_{M}\left( f,g\right) \right\Vert
_{(p_{3}^{\prime },\theta }\left\Vert \Phi \right\Vert _{1}
\end{equation*}%
\begin{equation}
\leq \left\Vert \Phi \right\Vert _{1}\left\Vert M\right\Vert _{\left[
(p_{1}^{\prime };(p_{2}^{\prime };(p_{3}^{\prime }\right] _{\theta
}}\left\Vert f\right\Vert _{(p_{1}^{\prime },\theta }\left\Vert g\right\Vert
_{(p_{2}^{\prime },\theta }<\infty .  \tag{2.39}
\end{equation}

Hence $m\in BM_{\theta }\left[ (p_{1}^{\prime };(p_{2}^{\prime
};(p_{3}^{\prime }\right] $ and by (2.39)

\begin{equation*}
\left\Vert m\right\Vert _{\left[ (p_{1}^{\prime };(p_{2}^{\prime
};(p_{3}^{\prime }\right] _{\theta }}\leq \left\Vert \Phi \right\Vert
_{1}\left\Vert M\right\Vert _{\left[ (p_{1}^{\prime };(p_{2}^{\prime
};(p_{3}^{\prime }\right] _{\theta }}\text{.}
\end{equation*}
\end{proof}

\begin{proposition}
Let $K\in \ell ^{1}\left( \hat{G}\right) $ be non-zero function and let $%
K\in \tilde{M}_{\theta }[(p_{1}^{\prime };(p_{2}^{\prime };(p_{3}^{\prime }]$%
. If $A$ is an automorphism of $G$ and if $\frac{1}{q}=\frac{1}{%
p_{1}^{\prime }}+\frac{1}{p_{2}^{\prime }}-\frac{1}{p_{3}^{\prime }}$, then
there exists $C>0$ such that%
\begin{equation*}
\left\vert \underset{G}{\int }K^{\vee }\left( u\right) d\lambda \left(
u\right) \right\vert \leq C\left\vert A\right\vert ^{\frac{1}{p_{3}^{\prime }%
}}\left\Vert K\right\Vert _{\left[ (p_{1}^{\prime };(p_{2}^{\prime
};(p_{3}^{\prime }\right] _{\theta }},\text{ }\left\vert A\right\vert <1
\end{equation*}%
and%
\begin{equation*}
\left\vert \underset{G}{\int }K^{\vee }\left( u\right) d\lambda \left(
u\right) \right\vert \leq C\left\vert A\right\vert ^{-\frac{1}{q}}\left\Vert
K\right\Vert _{\left[ (p_{1}^{\prime };(p_{2}^{\prime };(p_{3}^{\prime }%
\right] _{\theta }},\text{ }\left\vert A\right\vert \geq 1.
\end{equation*}
\end{proposition}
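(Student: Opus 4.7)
The plan is to test the bilinear multiplier inequality on the constant functions $f=g\equiv 1$ on $G$, which belong to every $L^{(p_i',\theta}(G)$ since $G$ is compact, and to combine this with the scaling supplied by Theorem 3 and the dilation identity of Lemma 2. By Proposition 5 applied to $K$, we have $B_{K}(f,g)(x)=\int_{G}f(x-y)g(x+y)K^{\vee}(y)\,d\lambda(y)$; evaluating at $f=g\equiv 1$ shows that $B_{K}(1,1)$ is the constant function equal to $\int_{G}K^{\vee}(y)\,d\lambda(y)$.

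Next I would pass to the dilated multiplier furnished by Theorem 3. Setting $m(s,t)=K(s-t)$, Theorem 3 gives $\tilde{D}_{A^{\ast}}^{q}m(s,t)=|A^{\ast}|^{1/q}K(A^{\ast}(s-t))\in BM_{\theta}[(p_{1}';(p_{2}';(p_{3}']$ with norm bounded by $\|K\|_{[(p_{1}';(p_{2}';(p_{3}']_{\theta}}$. Writing $\tilde{K}(u)=|A^{\ast}|^{1/q}K(A^{\ast}u)$, a change of variables $v=A^{\ast}u$ (using $|A|=|A^{\ast}|$ and $d\mu(v)=|A^{\ast}|\,d\mu(u)$) gives the key identity $\tilde{K}^{\vee}(y)=|A|^{1/q-1}K^{\vee}(A^{-1}y)$. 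Hence, applying Proposition 5 to $\tilde{K}$ with $f=g\equiv 1$ and substituting $z=A^{-1}y$ in the resulting integral yields $B_{\tilde{K}}(1,1)=|A|^{1/q}\int_{G}K^{\vee}$, again a constant function.

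The two cases of the conclusion are now extracted using the boundedness of $B_{\tilde{K}}$ and $B_{K}$ combined with the piecewise behaviour of Lemma 2. For $|A|\geq 1$, the multiplier estimate $\|B_{\tilde{K}}(1,1)\|_{(p_{3}',\theta}\leq\|\tilde{K}\|_{[(p_{1}';(p_{2}';(p_{3}']_{\theta}}\|1\|_{(p_{1}',\theta}\|1\|_{(p_{2}',\theta}$, together with the explicit value $\|B_{\tilde{K}}(1,1)\|_{(p_{3}',\theta}=|A|^{1/q}|\int_{G}K^{\vee}|\,\|1\|_{(p_{3}',\theta}$, directly produces $|\int_{G}K^{\vee}|\leq C|A|^{-1/q}\|K\|_{[(p_{1}';(p_{2}';(p_{3}']_{\theta}}$. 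For $|A|<1$, I would apply the scaling identity from the proof of Theorem 3, namely $B_{K}(D_{A}^{p_{1}'}1,D_{A}^{p_{2}'}1)=D_{A}^{p_{3}'}B_{\tilde{K}}(1,1)$; since $D_{A}^{p_{1}'}1,D_{A}^{p_{2}'}1$ are still constants, Lemma 2 in the regime $|A|<1$ gives both $\|D_{A}^{p_{i}'}1\|_{(p_{i}',\theta}=|A|^{1/p_{i}'}\|1\|_{(p_{i}',\theta}$ and $\|D_{A}^{p_{3}'}B_{\tilde{K}}(1,1)\|_{(p_{3}',\theta}=|A|^{1/p_{3}'}\|B_{\tilde{K}}(1,1)\|_{(p_{3}',\theta}$. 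Combining the multiplier bound on $B_{K}(D_{A}^{p_{1}'}1,D_{A}^{p_{2}'}1)$ with these two applications of Lemma 2 then delivers $|\int_{G}K^{\vee}|\leq C|A|^{1/p_{3}'}\|K\|_{[(p_{1}';(p_{2}';(p_{3}']_{\theta}}$.

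The main obstacle is the careful interplay between Proposition 5 (integral representation of $B_{K}$), Theorem 3 (scaling behaviour of the multiplier and the transfer identity for $B$), and Lemma 2 (piecewise dilation norm in the small Lebesgue spaces): one must recognise that for $|A|<1$ the gain of $|A|^{1/p_{3}'}$ enters through the output-side dilation $D_{A}^{p_{3}'}$ where Lemma 2 is active, whereas for $|A|\geq 1$ the useful scaling is the direct $|A|^{1/q}$ factor coming from the rescaled multiplier $\tilde{K}$. Consistency of exponents across the two regimes is enforced by the hypothesis $\tfrac{1}{q}=\tfrac{1}{p_{1}'}+\tfrac{1}{p_{2}'}-\tfrac{1}{p_{3}'}$.
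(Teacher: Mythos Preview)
Your argument for the case $|A|\geq 1$ is correct and is a clean alternative to the paper's: instead of testing $B_K$ on the character $f(x)=\langle\gamma,Ax\rangle$, you test the rescaled multiplier $\tilde K$ (from Theorem~3) on the constant $1$, and the factor $|A|^{1/q}$ appears directly in $B_{\tilde K}(1,1)$.

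However, the case $|A|<1$ has a genuine gap. If you carry out your own chain of inequalities, the powers of $|A|$ cancel completely. With $B_K(D_A^{p_1'}1,D_A^{p_2'}1)=D_A^{p_3'}B_{\tilde K}(1,1)$ and $B_{\tilde K}(1,1)=|A|^{1/q}\int_G K^{\vee}$, Lemma~2 gives
\[
|A|^{1/p_3'}\,|A|^{1/q}\Bigl|\int_G K^{\vee}\Bigr|\,\|1\|_{(p_3',\theta}
\;\le\;\|K\|\;|A|^{1/p_1'}\,\|1\|_{(p_1',\theta}\;|A|^{1/p_2'}\,\|1\|_{(p_2',\theta},
\]
and since $\tfrac{1}{q}=\tfrac{1}{p_1'}+\tfrac{1}{p_2'}-\tfrac{1}{p_3'}$ the exponent on the right minus the exponent on the left is $0$. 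You obtain only the trivial bound $|\int_G K^{\vee}|\le C\|K\|$, not the claimed $|A|^{1/p_3'}$ factor. The underlying reason is that constants are eigenfunctions of every $D_A^{p'}$, so testing on $1$ cannot detect any asymmetry between the input and output dilations.

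The paper produces the missing factor by a different mechanism: it keeps the test function $f(x)=\langle\gamma,Ax\rangle$ and evaluates the \emph{output} in the classical norm $\|\cdot\|_{p_3'}$ rather than in $\|\cdot\|_{(p_3',\theta}$, obtaining $\|B_K(f,f)\|_{p_3'}=|A|^{-1/p_3'}\lambda(G)^{1/p_3'}|\int_G K^{\vee}|$ via the change of variables $u=Ax$. It then passes back through the embedding $L^{(p_3',\theta}(G)\subset L^{p_3'}(G)$. For $|A|<1$, Lemma~2 makes the input norms $\|f\|_{(p_i',\theta}$ independent of $|A|$, so the surviving power is exactly $|A|^{1/p_3'}$. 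To repair your argument in this regime you would need either this output-side step (compute an $L^{p_3'}$ norm and use the embedding) or some other device that breaks the exponent symmetry; the identity from Theorem~3 alone is not enough.
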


\begin{proof}
Let $A$ be any automorphism of $G$. Define a function $f:G\rightarrow 
\mathbb{C}
$ by $f\left( x\right) =\left\langle \gamma ,Ax\right\rangle $ for fixed $%
\gamma \in \hat{G}.$ By Proposition 4, we write%
\begin{equation*}
B_{K}\left( f,f\right) \left( x\right) =\underset{G}{\int }f\left(
x-y\right) f\left( x+y\right) K^{\vee }\left( y\right) dy
\end{equation*}%
\begin{equation*}
=\underset{G}{\int }\left\langle \gamma ,A\left( x-y\right) \right\rangle
\left\langle \gamma ,A\left( x+y\right) \right\rangle K^{\vee }\left(
y\right) dy
\end{equation*}%
\begin{equation*}
=\underset{G}{\int }\left\langle \gamma ,Ax\right\rangle \left\langle \gamma
,-Ay\right\rangle \left\langle \gamma ,Ax\right\rangle \left\langle \gamma
,Ay\right\rangle K^{\vee }\left( y\right) dy
\end{equation*}%
\begin{equation}
=\underset{G}{\int }\left\langle \gamma ,Ax\right\rangle \left\langle \gamma
,Ax\right\rangle K^{\vee }\left( y\right) dy=\left\langle \gamma
,Ax\right\rangle \left\langle \gamma ,Ax\right\rangle \underset{G}{\int }%
K^{\vee }\left( y\right) dy.  \tag{2.40}
\end{equation}%
Using (2.40) and making the substitution $Ax=u$, we have 
\begin{equation*}
\left\Vert B_{K}\left( f,f\right) \right\Vert _{p_{3}^{\prime }}=\left( 
\underset{G}{\dint }\left\vert B_{K}\left( f,f\right) \left( x\right)
\right\vert ^{p_{3}^{\prime }}d\lambda \left( x\right) \right) ^{\frac{1}{%
p_{3}^{\prime }}}
\end{equation*}%
\begin{equation*}
=\left( \underset{G}{\dint }\left\vert \left\langle \gamma ,Ax\right\rangle
\left\langle \gamma ,Ax\right\rangle \underset{G}{\int }K^{\vee }\left(
y\right) dy\right\vert ^{p_{3}^{\prime }}d\lambda \left( x\right) \right) ^{%
\frac{1}{p_{3}^{\prime }}}
\end{equation*}%
\begin{equation*}
=\left\vert \underset{G}{\int }K^{\vee }\left( y\right) dy\right\vert \left( 
\underset{G}{\dint }\left\vert \left\langle \gamma ,Ax\right\rangle
\left\langle \gamma ,Ax\right\rangle \right\vert ^{p_{3}^{\prime }}d\lambda
\left( x\right) \right) ^{\frac{1}{p_{3}^{\prime }}}
\end{equation*}%
\begin{equation*}
=\left\vert \underset{G}{\int }K^{\vee }\left( y\right) dy\right\vert \left( 
\underset{G}{\dint }\left\vert \left\langle \gamma ,u\right\rangle
\left\langle \gamma ,u\right\rangle \right\vert ^{p_{3}^{\prime }}\left\vert
A\right\vert ^{-1}d\lambda \left( u\right) \right) ^{\frac{1}{p_{3}^{\prime }%
}}
\end{equation*}%
\begin{equation}
=\left\vert A\right\vert ^{-\frac{1}{p_{3}^{\prime }}}\left\vert \underset{G}%
{\int }K^{\vee }\left( y\right) dy\right\vert \left( \underset{G}{\dint }%
d\lambda \left( u\right) \right) ^{\frac{1}{p_{3}^{\prime }}}=\left\vert
A\right\vert ^{-\frac{1}{p_{3}^{\prime }}}\lambda \left( G\right) ^{\frac{1}{%
p_{3}^{\prime }}}\left\vert \underset{G}{\int }K^{\vee }\left( y\right)
dy\right\vert .  \tag{2.41}
\end{equation}%
On the other hand we can write%
\begin{equation*}
f\left( x\right) =\left\langle \gamma ,Ax\right\rangle =\left\vert
A\right\vert ^{-\frac{1}{p_{1}^{\prime }}}\left\vert A\right\vert ^{\frac{1}{%
p_{1}^{\prime }}}\left\langle \gamma ,Ax\right\rangle =\left\vert
A\right\vert ^{-\frac{1}{p_{1}^{\prime }}}D_{A}^{p_{1}^{\prime }}\gamma
\left( x\right) .
\end{equation*}%
Let $\left\vert A\right\vert \geq 1$. By Lemma 2, we obtain 
\begin{equation*}
\left\Vert f\right\Vert _{(p_{1}^{\prime },\theta }=\left\Vert \left\vert
A\right\vert ^{-\frac{1}{p_{1}^{\prime }}}D_{A}^{p_{1}^{\prime }}\gamma
\right\Vert _{(p_{1}^{\prime },\theta }=\left\vert A\right\vert ^{-\frac{1}{%
p_{1}^{\prime }}}\left\Vert D_{A}^{p_{1}^{\prime }}\gamma \right\Vert
_{(p_{1}^{\prime },\theta }
\end{equation*}%
\begin{equation*}
=\left\vert A\right\vert ^{-\frac{1}{p_{1}^{\prime }}}\left\Vert \gamma
\right\Vert _{(p_{1}^{\prime },\theta }.
\end{equation*}%
Since $L^{p_{1}^{\prime }+\varepsilon }\left( G\right) \subset
L^{(p_{1}^{\prime },\theta }$ $\left( G\right) ,$ $[2]$, there exists $%
C_{1}>0$ such that $\left\Vert \gamma \right\Vert _{(p_{1}^{\prime },\theta
}\leq C_{1}\left\Vert \gamma \right\Vert _{p_{1}^{\prime }+\varepsilon }$.
Then 
\begin{equation*}
\left\Vert f\right\Vert _{(p_{1}^{\prime },\theta }=\left\vert A\right\vert
^{-\frac{1}{p_{1}^{\prime }}}\left\Vert \gamma \right\Vert _{(p_{1}^{\prime
},\theta }\leq C_{1}\left\vert A\right\vert ^{-\frac{1}{p_{1}^{\prime }}%
}\left\Vert \gamma \right\Vert _{p_{1}^{\prime }+\varepsilon }.
\end{equation*}%
\begin{equation*}
=C_{1}\left\vert A\right\vert ^{-\frac{1}{p_{1}^{\prime }}}\left( \underset{G%
}{\dint }\left\vert \left\langle \gamma ,x\right\rangle \right\vert
^{p_{1}^{\prime }+\varepsilon }d\lambda \left( x\right) \right) ^{\frac{1}{%
p_{1}^{\prime }+\varepsilon }}
\end{equation*}%
\begin{equation}
=C_{1}\left\vert A\right\vert ^{-\frac{1}{p_{1}^{\prime }}}\left( \underset{G%
}{\dint }d\lambda \left( x\right) \right) ^{\frac{1}{p_{1}^{\prime
}+\varepsilon }}=C_{1}\left\vert A\right\vert ^{-\frac{1}{p_{1}^{\prime }}%
}\lambda \left( G\right) ^{\frac{1}{p_{1}^{\prime }+\varepsilon }}<\infty . 
\tag{2.42}
\end{equation}%
Similarly there exists $C_{2}>0$ such that 
\begin{equation}
\left\Vert f\right\Vert _{(p_{2}^{\prime },\theta }\leq C_{2}\left\vert
A\right\vert ^{-\frac{1}{p_{2}^{\prime }}}\lambda \left( G\right) ^{\frac{1}{%
p_{2}^{\prime }+\varepsilon }}<\infty .  \tag{2.43}
\end{equation}%
Using the assumption $K\in \tilde{M}_{\theta }[(p_{1}^{\prime
};(p_{2}^{\prime };(p_{3}^{\prime }]$ and the inequalities (2.42) and
(2.43), we obtain%
\begin{equation*}
\left\Vert B_{K}\left( f,f\right) \right\Vert _{(p_{3}^{\prime },\theta
}\leq \left\Vert K\right\Vert _{\left[ (p_{1}^{\prime };(p_{2}^{\prime
};(p_{3}^{\prime }\right] _{\theta }}\left\Vert f\right\Vert
_{(p_{1}^{\prime },\theta }\left\Vert f\right\Vert _{(p_{2}^{\prime },\theta
}
\end{equation*}%
\begin{equation}
\leq C_{1}C_{2}\left\vert A\right\vert ^{-\frac{1}{p_{1}^{\prime }}-\frac{1}{%
p_{2}^{\prime }}}\lambda \left( G\right) ^{\frac{1}{p_{1}^{\prime
}+\varepsilon }+\frac{1}{p_{2}^{\prime }+\varepsilon }}\left\Vert
K\right\Vert _{\left[ (p_{1}^{\prime };(p_{2}^{\prime };(p_{3}^{\prime }%
\right] _{\theta }}.  \tag{2.44}
\end{equation}%
Since $L^{(p_{3}^{\prime },\theta }$ $\left( G\right) \subset
L^{p_{3}^{\prime }}\left( G\right) $, there exists $C_{3}>0$ such that 
\begin{equation}
\left\Vert B_{K}\left( f,f\right) \right\Vert _{p_{3}^{\prime }}\leq
C_{3}\left\Vert B_{K}\left( f,f\right) \right\Vert _{(p_{3}^{\prime },\theta
}.  \tag{2.45}
\end{equation}%
Then by (2.44) and (2.45), we have%
\begin{equation}
\left\Vert B_{K}\left( f,f\right) \right\Vert _{p_{3}^{\prime }}\leq
C_{1}C_{2}C_{3}\left\vert A\right\vert ^{-\frac{1}{p_{1}^{\prime }}-\frac{1}{%
p_{2}^{\prime }}}\lambda \left( G\right) ^{\frac{1}{p_{1}^{\prime
}+\varepsilon }+\frac{1}{p_{2}^{\prime }+\varepsilon }}\left\Vert
K\right\Vert _{\left[ (p_{1}^{\prime };(p_{2}^{\prime };(p_{3}^{\prime }%
\right] _{\theta }}  \tag{2.46}
\end{equation}%
Also by (2.41) and (2.46), we achieve%
\begin{equation*}
\lambda \left( G\right) ^{\frac{1}{p_{3}^{\prime }}}\left\vert A\right\vert
^{-\frac{1}{p_{3}^{\prime }}}\left\vert \underset{G}{\int }K^{\vee }\left(
y\right) dy\right\vert =\left\Vert B_{K}\left( f,f\right) \right\Vert
_{p_{3}^{\prime }}\leq
\end{equation*}%
\begin{equation*}
\leq C_{1}C_{2}C_{3}\left\vert A\right\vert ^{-\frac{1}{p_{1}^{\prime }}-%
\frac{1}{p_{2}^{\prime }}}\lambda \left( G\right) ^{\frac{1}{p_{1}^{\prime
}+\varepsilon }+\frac{1}{p_{2}^{\prime }+\varepsilon }}\left\Vert
K\right\Vert _{\left[ (p_{1}^{\prime };(p_{2}^{\prime };(p_{3}^{\prime }%
\right] _{\theta }}.
\end{equation*}%
This implies%
\begin{equation*}
\left\vert \underset{G}{\int }K^{\vee }\left( y\right) dy\right\vert \leq
C\left\vert A\right\vert ^{\frac{1}{p_{3}^{\prime }}-\frac{1}{p_{1}^{\prime }%
}-\frac{1}{p_{2}^{\prime }}}\left\Vert K\right\Vert _{\left[ (p_{1}^{\prime
};(p_{2}^{\prime };(p_{3}^{\prime }\right] _{\theta }}=C\left\vert
A\right\vert ^{-\frac{1}{q}}\left\Vert K\right\Vert _{\left[ (p_{1}^{\prime
};(p_{2}^{\prime };(p_{3}^{\prime }\right] _{\theta }},
\end{equation*}%
where $C=C_{1}C_{2}C_{3}\lambda \left( G\right) ^{\frac{1}{p_{1}^{\prime
}+\varepsilon }+\frac{1}{p_{2}^{\prime }+\varepsilon }-\frac{1}{%
p_{3}^{\prime }}}.$

Now let $\left\vert A\right\vert <1.$ By Lemma 2, we have 
\begin{equation*}
\left\Vert f\right\Vert _{(p_{1}^{\prime },\theta }=\left\Vert \left\vert
A\right\vert ^{-\frac{1}{p_{1}^{\prime }}}D_{A}^{p_{1}^{\prime }}\gamma
\right\Vert _{(p_{1}^{\prime },\theta }=\left\vert A\right\vert ^{-\frac{1}{%
p_{1}^{\prime }}}\left\Vert D_{A}^{p_{1}^{\prime }}\gamma \right\Vert
_{(p_{1}^{\prime },\theta }
\end{equation*}%
\begin{equation*}
=\left\vert A\right\vert ^{-\frac{1}{p_{1}^{\prime }}}\left\vert
A\right\vert ^{\frac{1}{p_{1}^{\prime }}}\left\Vert \gamma \right\Vert
_{(p_{1}^{\prime },\theta }=\left\Vert \gamma \right\Vert _{(p_{1}^{\prime
},\theta }.
\end{equation*}%
Then Since $L^{p_{1}^{\prime }+\varepsilon }\left( G\right) \subset
L^{(p_{1}^{\prime },\theta }$ $\left( G\right) ,$ we achieve 
\begin{equation*}
\left\Vert f\right\Vert _{(p_{1}^{\prime },\theta }=\left\Vert \gamma
\right\Vert _{(p_{1}^{\prime },\theta }\leq C_{1}\left\Vert \gamma
\right\Vert _{p_{1}^{\prime }+\varepsilon }.
\end{equation*}%
\begin{equation}
=C_{1}\lambda \left( G\right) ^{\frac{1}{p_{1}^{\prime }+\varepsilon }%
}<\infty ,  \tag{2.47}
\end{equation}%
for some $C_{1}>0$. Similarly we have 
\begin{equation}
\left\Vert f\right\Vert _{(p_{2}^{\prime },\theta }\leq C_{2}\lambda \left(
G\right) ^{\frac{1}{p_{2}^{\prime }+\varepsilon }}<\infty ,  \tag{2.48}
\end{equation}%
for some $C_{2}>0$. Again using the assumption $K\in \tilde{M}_{\theta
}[(p_{1}^{\prime };(p_{2}^{\prime };(p_{3}^{\prime }]$ and the inequalities
(2.47) and (2.48), we obtain%
\begin{equation*}
\left\Vert B_{K}\left( f,f\right) \right\Vert _{(p_{3}^{\prime },\theta
}\leq \left\Vert K\right\Vert _{\left[ (p_{1}^{\prime };(p_{2}^{\prime
};(p_{3}^{\prime }\right] _{\theta }}\left\Vert f\right\Vert
_{(p_{1}^{\prime },\theta }\left\Vert f\right\Vert _{(p_{2}^{\prime },\theta
}
\end{equation*}%
\begin{equation}
\leq C_{1}C_{2}\lambda \left( G\right) ^{\frac{1}{p_{1}^{\prime
}+\varepsilon }+\frac{1}{p_{2}^{\prime }+\varepsilon }}\left\Vert
K\right\Vert _{\left[ (p_{1}^{\prime };(p_{2}^{\prime };(p_{3}^{\prime }%
\right] _{\theta }}.  \tag{2.49}
\end{equation}%
Thus by (2.45) and (2.49), we have%
\begin{equation}
\left\Vert B_{K}\left( f,f\right) \right\Vert _{p_{3}^{\prime }}\leq
C_{1}C_{2}C_{3}\lambda \left( G\right) ^{\frac{1}{p_{1}^{\prime
}+\varepsilon }+\frac{1}{p_{2}^{\prime }+\varepsilon }}\left\Vert
K\right\Vert _{\left[ (p_{1}^{\prime };(p_{2}^{\prime };(p_{3}^{\prime }%
\right] _{\theta }}.  \tag{2.50}
\end{equation}%
Using (2.41) and (2.50), we achieve

\begin{equation*}
\lambda \left( G\right) ^{\frac{1}{p_{3}^{\prime }}}\left\vert A\right\vert
^{-\frac{1}{p_{3}^{\prime }}}\left\vert \underset{G}{\int }K^{\vee }\left(
y\right) dy\right\vert =\left\Vert B_{K}\left( f,f\right) \right\Vert
_{p_{3}^{\prime }}\leq
\end{equation*}%
\begin{equation*}
\leq C_{1}C_{2}C_{3}\lambda \left( G\right) ^{\frac{1}{p_{1}^{\prime
}+\varepsilon }+\frac{1}{p_{2}^{\prime }+\varepsilon }}\left\Vert
K\right\Vert _{\left[ (p_{1}^{\prime };(p_{2}^{\prime };(p_{3}^{\prime }%
\right] _{\theta }}.
\end{equation*}%
Then%
\begin{equation*}
\left\vert \underset{G}{\int }K^{\vee }\left( y\right) dy\right\vert \leq
C_{1}C_{2}C_{3}\left\vert A\right\vert ^{\frac{1}{p_{3}^{\prime }}}\lambda
\left( G\right) ^{\frac{1}{p_{1}^{\prime }+\varepsilon }+\frac{1}{%
p_{2}^{\prime }+\varepsilon }-\frac{1}{p_{3}^{\prime }}}\left\Vert
K\right\Vert _{\left[ (p_{1}^{\prime };(p_{2}^{\prime };(p_{3}^{\prime }%
\right] _{\theta }}=C\left\vert A\right\vert ^{\frac{1}{p_{3}^{\prime }}%
}\left\Vert K\right\Vert _{\left[ (p_{1}^{\prime };(p_{2}^{\prime
};(p_{3}^{\prime }\right] _{\theta }},
\end{equation*}%
where $C=C_{1}C_{2}C_{3}\lambda \left( G\right) ^{\frac{1}{p_{1}^{\prime
}+\varepsilon }+\frac{1}{p_{2}^{\prime }+\varepsilon }-\frac{1}{%
p_{3}^{\prime }}}.$
\end{proof}

\begin{proposition}
Let $K\in \ell ^{1}\left( \hat{G}\right) $ be non-zero function and let $%
K\in \tilde{M}_{\theta }[(p_{1}^{\prime };(p_{2}^{\prime };(p_{3}^{\prime }]$%
. If $A$ is an automorphism of $G$ satisfying $\left\vert A\right\vert >1,$
then%
\begin{equation*}
\frac{1}{p_{3}^{\prime }}\geq \frac{1}{p_{1}^{\prime }}+\frac{1}{%
p_{2}^{\prime }}.
\end{equation*}
\end{proposition}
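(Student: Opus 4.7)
The plan is to argue by contradiction. Suppose, against the conclusion, that
\[
\frac{1}{p_{3}^{\prime}} < \frac{1}{p_{1}^{\prime}} + \frac{1}{p_{2}^{\prime}},
\]
so that with $\frac{1}{q} = \frac{1}{p_{1}^{\prime}} + \frac{1}{p_{2}^{\prime}} - \frac{1}{p_{3}^{\prime}}$ one has $\frac{1}{q} > 0$. The goal is then to deduce that $K$ vanishes identically, contradicting the standing hypothesis that $K$ is non-zero.

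The first step is translation. Since $K \neq 0$ on the discrete group $\hat{G}$, pick $y_{0} \in \hat{G}$ with $K(-y_{0}) \neq 0$. The point mass $\delta_{y_{0}}$ satisfies $\delta_{y_{0}} \in \ell^{1}(\hat{G})$ with $\|\delta_{y_{0}}\|_{\ell^{1}}=1$, and $\delta_{y_{0}} \ast K = T_{y_{0}}K$. Hence Theorem 6(a) gives $T_{y_{0}}K \in \tilde{M}_{\theta}[(p_{1}^{\prime};(p_{2}^{\prime};(p_{3}^{\prime}]$ together with
\[
\|T_{y_{0}}K\|_{[(p_{1}^{\prime};(p_{2}^{\prime};(p_{3}^{\prime}]_{\theta}} \leq \|K\|_{[(p_{1}^{\prime};(p_{2}^{\prime};(p_{3}^{\prime}]_{\theta}}.
\]

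The second step is to amplify $A$. For each $n \in \mathbb{N}$ the iterate $A^{n}$ is again an automorphism of $G$, with $|A^{n}| = |A|^{n} \geq 1$ since $|A|>1$. Applying the $|A|\geq 1$ branch of Proposition 7 to the pair $(T_{y_{0}}K, A^{n})$ yields a constant $C>0$, independent of $n$ and $y_{0}$, with
\[
\left|\int_{G}(T_{y_{0}}K)^{\vee}(u)\,d\lambda(u)\right| \leq C\,|A|^{-n/q}\,\|T_{y_{0}}K\|_{[(p_{1}^{\prime};(p_{2}^{\prime};(p_{3}^{\prime}]_{\theta}} \leq C\,|A|^{-n/q}\,\|K\|_{[(p_{1}^{\prime};(p_{2}^{\prime};(p_{3}^{\prime}]_{\theta}}.
\]
Fourier inversion on the compact group $G$, applied at $s = 0_{\hat{G}}$, identifies the left-hand side as $(T_{y_{0}}K)(0_{\hat{G}}) = K(-y_{0})$. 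Since $|A|>1$ and $1/q>0$, we have $|A|^{-n/q}\to 0$ as $n\to\infty$, so $K(-y_{0}) = 0$, contradicting the choice of $y_{0}$.

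The main obstacle is the translation step: one needs to know that shifting the kernel $K$ by an element of $\hat{G}$ keeps it inside $\tilde{M}_{\theta}[(p_{1}^{\prime};(p_{2}^{\prime};(p_{3}^{\prime}]$ with norm control uniform in the shift, so that the constant in Proposition 7 does not silently depend on $y_{0}$. Recognising the translate $T_{y_{0}}K$ as the convolution $\delta_{y_{0}} \ast K$ and invoking Theorem 6(a) is exactly what supplies this. Once that is in place, the rest is the familiar scaling trick: iterate $A$ to make the modulus arbitrarily large, and let the strictly negative exponent $-1/q$ (forced by the contradiction hypothesis) drive the bound to zero.
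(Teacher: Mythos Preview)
Your argument is correct and follows the same strategy as the paper: argue by contradiction, show that translates of $K$ remain in $\tilde{M}_{\theta}[(p_{1}^{\prime};(p_{2}^{\prime};(p_{3}^{\prime}]$ with uniformly bounded norm (the paper does this via Proposition~5 rather than convolution with a point mass, but both routes work), apply Proposition~7 to the translate, invoke Fourier inversion at $0_{\hat G}$, and let the modulus grow to force $K$ to vanish. Two small remarks: the convolution result you invoke is Theorem~7(a) in the paper's numbering, not Theorem~6(a); and your explicit use of the iterates $A^{n}$ is in fact more careful than the paper's informal ``$|A|\to\infty$''.
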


\begin{proof}
Assume that $\frac{1}{p_{3}^{\prime }}<\frac{1}{p_{1}^{\prime }}+\frac{1}{%
p_{2}^{\prime }}$. By Proposition 5,%
\begin{equation*}
\left\Vert B_{T_{-u}K}\left( f,g\right) \right\Vert _{(p_{3}^{\prime
},\theta }=\left\Vert B_{T_{-u+0_{\hat{G}}}K}\left( f,g\right) \right\Vert
_{(p_{3}^{\prime },\theta }
\end{equation*}%
\begin{equation*}
=\left\Vert M_{-u-0_{\hat{G}}}B_{K}\left( M_{u}f,M_{0_{\hat{G}}}g\right)
\right\Vert _{(p_{3}^{\prime },\theta }=\left\Vert B_{K}\left(
M_{u}f,g\right) \right\Vert _{(p_{3}^{\prime },\theta }
\end{equation*}%
\begin{equation*}
\leq \left\Vert K\right\Vert _{\left[ (p_{1}^{\prime };(p_{2}^{\prime
};(p_{3}^{\prime }\right] _{\theta }}\left\Vert M_{u}f\right\Vert
_{(p_{1}^{\prime },\theta }\left\Vert g\right\Vert _{(p_{2}^{\prime },\theta
}
\end{equation*}%
\begin{equation*}
=\left\Vert K\right\Vert _{\left[ (p_{1}^{\prime };(p_{2}^{\prime
};(p_{3}^{\prime }\right] _{\theta }}\left\Vert f\right\Vert
_{(p_{1}^{\prime },\theta }\left\Vert g\right\Vert _{(p_{2}^{\prime },\theta
},
\end{equation*}%
and so $T_{-u}K\in \tilde{M}_{\theta }[(p_{1}^{\prime };(p_{2}^{\prime
};(p_{3}^{\prime }]$ for all $u\in \hat{G}$. On the other hand

\begin{equation*}
\left\Vert T_{-u}K\right\Vert _{\left[ (p_{1}^{\prime };(p_{2}^{\prime
};(p_{3}^{\prime }\right] _{\theta }}=\left\Vert B_{T_{-u}K}\right\Vert
\end{equation*}%
\begin{equation*}
=\sup \left\{ \frac{\left\Vert B_{T_{-u}K}\left( f,g\right) \right\Vert
_{(p_{3}^{\prime },\theta }}{\left\Vert f\right\Vert _{(p_{1}^{\prime
},\theta }\left\Vert g\right\Vert _{(p_{2}^{\prime },\theta }}:\left\Vert
f\right\Vert _{(p_{1}^{\prime },\theta }\leq 1\text{, }\left\Vert
g\right\Vert _{(p_{2}^{\prime },\theta }\leq 1\right\}
\end{equation*}%
\begin{equation*}
=\sup \left\{ \frac{\left\Vert M_{-u}B_{K}\left( M_{u}f,M_{0_{\hat{G}%
}}g\right) \right\Vert _{(p_{3}^{\prime },\theta }}{\left\Vert
M_{u}f\right\Vert _{(p_{1}^{\prime },\theta }\left\Vert M_{0_{\hat{G}%
}}g\right\Vert _{(p_{2}^{\prime },\theta }}:\left\Vert M_{u}f\right\Vert
_{(p_{1}^{\prime },\theta }\leq 1\text{, }\left\Vert M_{0_{\hat{G}%
}}g\right\Vert _{(p_{2}^{\prime },\theta }\leq 1\right\}
\end{equation*}%
\begin{equation*}
=\left\Vert B_{K}\right\Vert =\left\Vert K\right\Vert _{\left[
(p_{1}^{\prime };(p_{2}^{\prime };(p_{3}^{\prime }\right] _{\theta }}\text{.}
\end{equation*}%
Thus, by Proposition 7, there exists $C>0$ such that 
\begin{equation*}
\left\vert \underset{G}{\int }\left( T_{-u}K\right) ^{\vee }\left( y\right)
d\lambda \left( y\right) \right\vert \leq C\left\vert A\right\vert ^{-\frac{1%
}{q}}\left\Vert T_{-u}K\right\Vert _{\left[ (p_{1}^{\prime };(p_{2}^{\prime
};(p_{3}^{\prime }\right] _{\theta }}
\end{equation*}%
\begin{equation}
=C\left\vert A\right\vert ^{-\frac{1}{q}}\left\Vert K\right\Vert _{\left[
(p_{1}^{\prime };(p_{2}^{\prime };(p_{3}^{\prime }\right] _{\theta }}. 
\tag{2.51}
\end{equation}%
Since $T_{-u}K\in \ell ^{1}\left( \hat{G}\right) $, from (2.51) we write%
\begin{equation*}
\left\vert K\left( u\right) \right\vert =\left\vert T_{-u}K\left( 0_{\hat{G}%
}\right) \right\vert =\left\vert \left( \left( T_{-u}K\right) ^{\vee
}\right) ^{\wedge }\left( 0_{\hat{G}}\right) \right\vert =\left\vert 
\underset{G}{\int }\left( T_{-u}K\right) ^{\vee }\left( y\right)
\left\langle 0_{\hat{G}},-y\right\rangle d\lambda \left( y\right) \right\vert
\end{equation*}%
\begin{equation}
=\left\vert \underset{G}{\int }\left( T_{-u}K\right) ^{\vee }\left( y\right)
d\lambda \left( y\right) \right\vert \leq C\left\vert A\right\vert ^{-\frac{1%
}{q}}\left\Vert K\right\Vert _{\left[ (p_{1}^{\prime };(p_{2}^{\prime
};(p_{3}^{\prime }\right] _{\theta }}  \tag{2.52}
\end{equation}%
for all $u\in \hat{G}$. Since $\frac{1}{p_{3}^{\prime }}<\frac{1}{%
p_{1}^{\prime }}+\frac{1}{p_{2}^{\prime }}$, then $-\frac{1}{q}<0$. Thus the
right side of (2.52) approaches to zero for $\left\vert A\right\vert
\rightarrow \infty $. This implies $K=0$. But this is a contradiction with
the assumption $K\neq 0$. \ Then the assumption $\frac{1}{p_{3}^{\prime }}<%
\frac{1}{p_{1}^{\prime }}+\frac{1}{p_{2}^{\prime }}$ is not true. Therefore
we conclude $\frac{1}{p_{3}^{\prime }}\geq \frac{1}{p_{1}^{\prime }}+\frac{1%
}{p_{2}^{\prime }}.$
\end{proof}

\begin{corollary}
Let $\frac{1}{p_{3}^{\prime }}<\frac{1}{p_{1}^{\prime }}+\frac{1}{%
p_{2}^{\prime }}.$ If $A$ is an automorphism of $G$ satisfying $\left\vert
A\right\vert >1.$ Then $\tilde{M}_{\theta }[(p_{1}^{\prime };(p_{2}^{\prime
};(p_{3}^{\prime }]$ $=\left\{ 0\right\} .$
\end{corollary}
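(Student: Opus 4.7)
The plan is to obtain this corollary as a direct contrapositive of Proposition 8, which is the substantive result just proved. Since $G$ is compact, $\hat G$ is discrete and countable, so any element $K$ of $\tilde M_{\theta}[(p_{1}^{\prime};(p_{2}^{\prime};(p_{3}^{\prime}]$ that lies in $\ell^{1}(\hat G)$ is covered by Proposition 8. I would open the proof by arguing by contradiction: suppose $\tilde M_{\theta}[(p_{1}^{\prime};(p_{2}^{\prime};(p_{3}^{\prime}] \neq \{0\}$, and pick a non-zero $K$ in this space (restricting attention to $\ell^{1}(\hat G)$, as in Proposition 8).

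Next I would invoke Proposition 8 directly. Its hypotheses are that $K \in \ell^{1}(\hat G)$ is non-zero, $K \in \tilde M_{\theta}[(p_{1}^{\prime};(p_{2}^{\prime};(p_{3}^{\prime}]$, and $A$ is an automorphism of $G$ with $|A| \geq 1$. Here $|A| > 1 \geq 1$, so Proposition 8 applies and gives the inequality
\begin{equation*}
\frac{1}{p_{3}^{\prime}} \geq \frac{1}{p_{1}^{\prime}} + \frac{1}{p_{2}^{\prime}}.
\end{equation*}
This directly contradicts the standing hypothesis $\frac{1}{p_{3}^{\prime}} < \frac{1}{p_{1}^{\prime}} + \frac{1}{p_{2}^{\prime}}$ of the corollary.

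Hence no such non-zero $K$ can exist, and the space collapses: $\tilde M_{\theta}[(p_{1}^{\prime};(p_{2}^{\prime};(p_{3}^{\prime}] = \{0\}$. There is essentially no obstacle here since all real work was done in Proposition 8; the only cosmetic point worth mentioning in the write-up is that the conclusion is genuinely the negation of the inequality obtained there, so the contrapositive is immediate. If one wanted to be fully careful about elements of $\tilde M_{\theta}$ not a priori in $\ell^{1}(\hat G)$, one would note that the definition forces boundedness of $B_{M}$ and, together with $\mu(G) < \infty$ making $\hat G$ discrete with counting measure, the relevant test functions used in Proposition 8's proof still witness the vanishing of $K$ pointwise via equality (2.52), so the argument goes through on the same footing.
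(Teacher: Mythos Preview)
Your contrapositive idea is the right instinct, but there is a genuine gap: Proposition 8 carries the standing hypothesis $K\in\ell^{1}(\hat G)$, and an arbitrary element of $\tilde{M}_{\theta}[(p_{1}^{\prime};(p_{2}^{\prime};(p_{3}^{\prime}]$ is only known to be a bounded measurable function on $\hat G$, not summable. Your parenthetical ``restricting attention to $\ell^{1}(\hat G)$'' therefore proves only $\tilde{M}_{\theta}\cap\ell^{1}(\hat G)=\{0\}$, which is strictly weaker than the stated corollary. The patch you sketch in the last paragraph does not work as written: equality (2.52) is $|K(u)|=\bigl|\int_{G}(T_{-u}K)^{\vee}(y)\,d\lambda(y)\bigr|$, and forming $(T_{-u}K)^{\vee}$ and recovering $K$ from it by Fourier inversion already presupposes $K\in\ell^{1}(\hat G)$. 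One could conceivably recompute $B_{K}(f,f)$ for the character test functions directly from the defining double sum (the Fourier transforms are point masses, so the sum is finite), but you have not carried this out, and it is not what (2.52) says.

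The paper closes this gap by a reduction trick you are missing. Given any $K\in\tilde{M}_{\theta}[(p_{1}^{\prime};(p_{2}^{\prime};(p_{3}^{\prime}]$, choose $\Psi\in L^{1}(G)$ with $0\neq\hat\Psi\in\ell^{1}(\hat G)$. By Theorem 7(b), $\hat\Psi K\in\tilde{M}_{\theta}[(p_{1}^{\prime};(p_{2}^{\prime};(p_{3}^{\prime}]$; and since $K$ is bounded, $\hat\Psi K\in\ell^{1}(\hat G)$. Now Proposition 8 applies to $\hat\Psi K$ and, under the hypothesis $\frac{1}{p_{3}^{\prime}}<\frac{1}{p_{1}^{\prime}}+\frac{1}{p_{2}^{\prime}}$, forces $\hat\Psi K=0$; since $\hat\Psi\neq 0$, one concludes $K=0$. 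This multiplication-by-$\hat\Psi$ step is the missing idea in your argument.
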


\begin{proof}
Take any $K\in \tilde{M}_{\theta }[(p_{1}^{\prime };(p_{2}^{\prime
};(p_{3}^{\prime }]$. Let $\Psi \in L^{1}\left( G\right) $ such that $0\neq $
$\Psi \symbol{94}\in \ell ^{1}\left( \hat{G}\right) $. By Theorem 7, we have 
$\Psi \symbol{94}K\in $ $\tilde{M}_{\theta }[(p_{1}^{\prime };(p_{2}^{\prime
};(p_{3}^{\prime }]$. On the other hand, since $K\in \tilde{M}_{\theta
}[(p_{1}^{\prime };(p_{2}^{\prime };(p_{3}^{\prime }]$, $K$ is bounded
function. Then we have $\Psi \symbol{94}K\in \ell ^{1}\left( \hat{G}\right) $%
. Since $\Psi \symbol{94}K\in \ell ^{1}\left( \hat{G}\right) \cap $ $\tilde{M%
}_{\theta }[(p_{1}^{\prime };(p_{2}^{\prime };(p_{3}^{\prime }]$ and $\frac{1%
}{p_{3}^{\prime }}<\frac{1}{p_{1}^{\prime }}+\frac{1}{p_{2}^{\prime }}$, by
Proposition 8, we have $\Psi \symbol{94}K=0$. Also since $\Psi \symbol{94}$
is non-zero function, we obtain $K=\left\{ 0\right\} $. This completes the
proof.
\end{proof}

\end{document}